\documentclass[12pt,twoside,reqno]{amsart}
 \usepackage[reqno]{amsmath} %\usepackage[colorlinks=true,citecolor=blue]{hyperref}
\usepackage{mathptmx,  mathtools,amssymb, amsfonts, amsthm,   enumerate, color, graphicx,   multirow,  enumitem}

\def\bege{\begin{equation}} \def\ende{\end{equation}}
\def\begr{\begin{eqnarray}}   \def\endr{\end{eqnarray}}
\allowdisplaybreaks

\theoremstyle{plain} % just in case the style had changed
\numberwithin{equation}{section}
\newtheorem{theorem}{Theorem}[section]
\newtheorem{proposition}{Proposition}[section]
\newtheorem{corollary}{Corollary}[section]

\newtheorem{remark}{Remark}[section]
\newtheorem{lemma}{Lemma}[section]

\def\D{\mathbb D}\def\dd{\mathrm{d}}\def\daz{\mathrm{d}A(z)}\def\apa{A_{\alpha}^p}\def\za{(1-|z|^2)^{\alpha}}
 \newcommand{\norm}[1]{\lVert#1\rVert}

\begin{document}
	\title[Norm of the generalized Hilbert operator]{Norm of the generalized Hilbert operator  on weighted Bergman spaces }
	
\author{Dongxing Li, Songxiao Li$^\ast$, Weiye Pan,  Hasi Wulan and Mengmeng Zhou }
	
\address{Dongxing Li \\ School of Financial  Mathematics and Statistics, Guangdong University of Finance, Guangzhou 510521, Guangdong, P.R. China. }
    \email{oio211@live.cn}

\address{Songxiao Li\\   Department of Mathematics, Shantou University, Shantou 515063, Guangdong, P.R. China.  }
\email{jyulsx@163.com}

\address{ Weiye Pan  \\ Department of Mathematics, Sichuan University,  Chengdu, Sichuan 610065, P.R. China.}
	\email{pan\_weiye@scu.edu.cn  }

\address{Hasi Wulan  \\ Department of Mathematics, Shantou University, Shantou 515063, Guangdong, P.R. China.}
	\email{wulan@stu.edu.cn  }

\address{Mengmeng Zhou\\   Department of Mathematics, Shantou University, Shantou 515063, Guangdong, P.R. China.  }
\email{25mmzhou@stu.edu.cn}

	\subjclass[2020]{30H25, 47B38}
	
	\begin{abstract}  Several upper bounds   as well as one  lower bound for the operator norm of the generalized Hilbert operator $\mathcal{H}_b$ acting on weighted Bergman spaces $A_{\alpha}^p$ are established.   Moreover, under some mild assumptions,  we obtain the exact norm   of $\mathcal{H}_b$ on $ A_{\alpha}^p$.

	\thanks{$^\ast$ Corresponding author.}
	\vskip 3mm \noindent{\it Keywords}: Generalized Hilbert operator,  weighted Bergman space,  norm.
	\end{abstract}
		\maketitle

\section{Introduction} \vskip 2mm

 \subsection{Hilbert matrix and Hilbert's inequality}

 The infinite matrix proposed by David Hilbert in 1894,
\begin{equation*}
\mathbb{H}  \,=\, \begin{pmatrix}
1 & 1/2 & 1/3 & \cdots \\
1/2 & 1/3 & 1/4 & \cdots \\
1/3 & 1/4 & 1/5 & \cdots \\
\vdots & \vdots & \vdots & \ddots
\end{pmatrix} \,=\,\left(\frac{1}{n+k+1}\right)_{n,k=0,1,2,\dots},
\end{equation*}
is referred to as the Hilbert matrix, which is the canonical example of a Hankel matrix.  Recall that an infinite matrix $A=(a_{i,j})$ is a Hankel matrix if
its entries depend only on the sum of its row and column indices,    i.e., $a_{i,j}=a_{i+j}$.

The Hilbert matrix naturally generates a sequence transformation, denoted by $\mathcal{H}$, acting as
\[
\{a_n\}_{n\ge 0} \longmapsto \left\{\sum_{k=0}^\infty\frac{a_k}{n+k+1}\right\}_{n\ge 0},
\]
for all sequences $\{a_n\}$ such that the series on the right-hand side converges absolutely. The classical Hilbert inequality states that
\[
\sum_{m=0}^{\infty} \sum_{n=0}^{\infty}\frac{a_m b_n}{m+n+1}\leq
\frac{\pi}{\sin\frac{\pi}{p}} \left(\sum_{m=0}^{\infty}|a_m|^p\right)^{\frac{1}{p}} \left(\sum_{n=0}^{\infty} |b_n|^q\right)^{\frac{1}{q}},
\]
where the conjugate exponents $p,q\in(1,\infty)$ satisfy $\frac{1}{p}+\frac{1}{q}=1$. Applying this inequality yields
\[
\left( \sum_{i = 0}^{\infty}\left| \sum_{j = 0}^{\infty}\frac{a_{j}}{i + j + 1}\right|^{p}\right)^{1/p} \leq \frac{\pi}{\sin \frac{\pi}{p}} \left( \sum_{k = 0}^{\infty}|a_{k}|^{p}\right)^{1/p}=\frac{\pi}{\sin\frac{\pi}{p}} \left\| \{a_n\} \right\|_{\ell^p}.
\]
This bound directly implies that the operator $\mathcal{H}: \ell^p \to \ell^p$ is bounded for every   $ p\in (1,\infty)$.

 \subsection{Known results of Hilbert operator on Hardy spaces and Bergman spaces}

 Let $\mathbb{D}$ denote the open unit disk of the complex plane $\mathbb{C}$, and let $\mathrm{H}(\mathbb{D})$ stand for the   space of all   analytic functions on $\mathbb{D}$.
Take any $f\in\mathrm{H}(\mathbb{D})$ with Taylor expansion $f(z)=\sum_{n=0}^{\infty}a_{n}z^{n}$. The Hilbert matrix $\mathbb{H}$ acts formally on the coefficient sequence $(a_{n})_{n\ge 0}$, which yields the formal power series representation
\[
\mathcal{H}(f)(z)=\sum_{n=0}^{\infty}\left(\sum_{k=0}^{\infty}\frac{a_{k}}{n+k+1}\right)z^{n}.
\]
For the constant function $f(z)=\sum_{n=0}^{\infty}z^{n}$, the inner series $\sum_{k=0}^{\infty}\frac{1}{n+k+1}$ diverges to $+\infty$ for every nonnegative integer $n$. Consequently, the mapping $\mathcal{H}$ cannot be defined as an operator on the entire space $\mathrm{H}(\mathbb{D})$.

In 2000, Diamantopoulos and Siskakis \cite{DS} initiated systematic research on the boundedness of the aforementioned operator $\mathcal{H}$, hereafter referred to as the Hilbert operator, acting on Hardy spaces $H^{p}$. They verified that $\mathcal{H}(f)$ is analytic on $\mathbb{D}$ for every $f\in H^{1}$, and established the integral representation identity
\[
\mathcal{H}(f)(z)=\sum_{n=0}^{\infty}\sum_{k=0}^{\infty}\frac{a_{k}}{n+k+1}z^{n}=\int_{0}^{1}\frac{f(t)}{1-tz}\,\mathrm{d}t
\]
valid for all $f\in H^{1}$. Their main results show that $\mathcal{H}$ fails to be bounded on $H^{1}$ and $H^{\infty}$, while $\mathcal{H}:H^{p}\to H^{p}$ is a bounded linear operator whenever $1<p<\infty$.  In 2008, Dostanić, Jevtić and Vukotić \cite{DJV} sharpened the preceding results by computing the exact operator norm of $\mathcal{H}$ on $H^{p}$ as follows.
\[
\|\mathcal{H}\|_{H^{p}\to H^{p}}=\frac{\pi}{\sin\frac{\pi}{p}}.
\]

In \cite{Di}, Diamantopoulos proved that the  operator \(\mathcal{H}\) acts boundedly on the Bergman space \(A^p\) if and only if   \(p > 2\); meanwhile, a sharp upper estimate of the operator norm \(\|\mathcal{H}\|_{A^p \rightarrow A^p}\) was established when \(p \geq 4\). By constructing suitable test functions, Dostanić, Jevtić and Vukotić  \cite{DJV} deduced an exact lower bound of \(\|\mathcal{H}\|_{A^p \rightarrow A^p}\) for all \(p > 2\), which further yields the exact norm value for \(p \geq 4\). For   \(2 < p < 4\),   Bozin and Karapetrović \cite{BK} obtained the corresponding sharp upper bound. Specifically, it was shown that,  for each \(p > 2\),
\[
\|\mathcal{H}\|_{A^p \rightarrow A^p} = \frac{\pi}{\sin \frac{2\pi}{p} }  .
\]

\subsection{Known results of Hilbert operator on weighted Bergman spaces}

 It is known from \cite{JK} that \(\mathcal{H}\) is bounded on the weighted Bergman space \(A^p_\alpha\) if and only if   \(1 < \alpha + 2 < p\).    Later, Karapetrović in \cite{Bo} deduced a lower norm estimate and a family of upper norm estimates for the operator $\mathcal{H}$ acting on $A_\alpha^p$. Specifically, Karapetrović proved the equality
\begin{align} \label{tkcj}
\|\mathcal{H}\|_{A_\alpha^p \rightarrow A_\alpha^p } = \frac{\pi}{\sin \frac{(\alpha + 2)\pi}{p}}
\end{align}
when $4 \leq 2(\alpha + 2) \leq p$. Based on the aforementioned result, Karapetrovi\'c formulated the following conjecture: for any parameters satisfying $\alpha > -1$ and $p > \alpha + 2$, the equality in \eqref{tkcj} remains valid. The table below lists the main contributions toward this conjecture in recent years.

\renewcommand{\arraystretch}{1.5}
\begin{tabular}{|p{5cm}|p{11cm}|}
  \hline
  \textbf{Authors} & \textbf{Conditions for the conjecture to hold} \\[1mm]
  \hline
  Karapetrović \cite{Bo} & $\alpha \geq 0$ and $p \geq 2(\alpha + 2)$ \\[1mm]
  \hline
  Lindstr\"{o}m, Mihkinen, and Wikman \cite{LM} &
  $\alpha > 0$ and $\alpha + 2 + \sqrt{\alpha^2 + \frac{7}{2}\alpha + 3} \leq p < 2(\alpha + 2)$ \\[1mm]
  \hline
  \multirow{2}{*}{Karapetrović \cite{ka}} &
  $\alpha > 0$ and $\alpha + 2 + \sqrt{(\alpha+2)^2 -(\sqrt{2}-\frac{1}{2})(\alpha + 2)} \leq p < 2(\alpha + 2)$ \\
  \cline{2-2}
  & $\alpha > 0$ and $\alpha_0 \leq p < 2(\alpha + 2)$, see \cite{ka} for the detail of $\alpha_0$. \\[1mm]
  \hline
  \multirow{4}{*}{Dai \cite{Da}} &
  $\alpha = 1$ \\
  \cline{2-2}
  & $0 < \alpha \leq \frac{1}{47}$ and $p > \alpha + 2$ \\
  \cline{2-2}
  & $\alpha > 0$ and $2 + \frac{3\alpha}{4} + \frac{1}{4}\sqrt{9\alpha^2 + 40\alpha + 48} \leq p < 2(\alpha + 2)$ \\
  \cline{2-2}
  & $-1 < \alpha < 0$ and $p \geq 2(\alpha + 2)$ \\[1mm]
  \hline
   Bao, Tian and Wulan \cite{BTH}& $\alpha \ge 0$ and $ \frac{9+3\alpha+\sqrt{9\alpha^{2}+30\alpha+33}}{4} \le p<2(\alpha+2)$ \\[1mm]
  \hline
\end{tabular}\\

Recently, Wulan, Zhou and Zhu \cite{wzz} proved that, for even exponents $p=2m$,
\[
 \norm{\mathcal{H}}_{A^{2m}_\alpha\to A^{2m}_\alpha}
 =B(a,1-a),\qquad a=\frac{\alpha+2}{2m},
\]
subject to the condition $0<a\le m/(2m-1)$. This norm equality holds for all admissible parameters when $p=2,4,6,8,10$. However, Karapetrović's conjecture fail for every even integer $p=2m$ with $m\ge550000$.

 \subsection{Generalized Hilbert operator on Hardy spaces}
 Let $b\geq 0$. Let \( \mathbb{N}_0 \) denote  the set of all non-negative integers.    Li and Stevi\'c \cite{ls} introduced the generalized Hilbert matrix as follows.
\begin{equation*}
\begingroup
\renewcommand{\arraystretch}{1.4}  % 增大行间距，默认是1
\mathbb{H}_b=\left( \frac{\Gamma(n + b + 1) \Gamma(n + k + 1)}{\Gamma(n + 1) \Gamma(n + k + b + 2)} \right)_{n,k \in \mathbb{N}_0}=\begin{pmatrix}
 \frac{1}{b+1} &  \frac{1}{(b+1)(b+2)} &  \frac{2}{(b+1)(b+2)(b+3)} & \cdots\\
 \frac{1}{b+2} & \frac{2}{(b+2)(b+3)} & \frac{6}{(b+2)(b+3)(b+4)} & \cdots\\
 \frac{1}{b+3} & \frac{3}{(b+3)(b+4)} &  \frac{12}{(b+3)(b+4)(b+5)}& \cdots\\
 \vdots & \vdots & \vdots & \ddots
\end{pmatrix}  .
\endgroup
\end{equation*}
It is easy to check that
  \begin{equation*}
\mathbb{H}_b=
\begingroup
\renewcommand{\arraystretch}{1.4}  % 增大行间距，默认是1
\begin{pmatrix}
 b! & 0 & 0 & \cdots \\
0 & (b+1)! & 0 & \cdots \\
 0 & 0 & \frac{(b+2)!}{2} & \cdots \\
 \vdots & \vdots & \vdots & \ddots
\end{pmatrix}
\endgroup
\begingroup
\renewcommand{\arraystretch}{1.4}  % 增大行间距，默认是1
\begin{pmatrix}
 \frac{1}{(b+1)!} & \frac{1}{(b+2)!} & \frac{2}{(b+3)!} & \cdots \\
 \frac{1}{(b+2)!} & \frac{2}{(b+3)!} & \frac{6}{(b+4)!} & \cdots \\
 \frac{2}{(b+3)!} & \frac{6}{(b+4)!}& \frac{24}{(b+5)!} & \cdots \\
 \vdots & \vdots & \vdots & \ddots
\end{pmatrix}.
\endgroup
\end{equation*}
Hence, the generalized Hilbert matrix can be decomposed into the product of a diagonal matrix and a Hankel matrix.

This generalized Hilbert matrix also induces a bounded linear operator on $\mathrm{H}(\mathbb{D})$, termed the generalized Hilbert operator and denoted by $\mathcal{H}_b$. For any analytic function $f(z) = \sum_{k=0}^{\infty} a_k z^k$, the operator acts on the Taylor coefficients of $f$ via the mapping
\begin{align*}
a_n \mapsto \sum_{k=0}^{\infty} \frac{ \Gamma(n + b + 1) \Gamma(n + k + 1)}{\Gamma(n + 1) \Gamma(n + k + b + 2)}   a_k, \quad n = 0, 1, 2, \cdots.
\end{align*}
Li and Stević \cite{ls} established some norm estimates for the operator $\mathcal{H}_b$ acting on Hardy spaces $H^p$. Additionally, the authors proposed an unresolved conjecture stating that the equality
\[
\|\mathcal{H}_b\|_{H^{p}\to H^{p}} = B\Big(\frac{1}{p},~b+1-\frac{1}{p}\Big)
\]
holds for  $b \geq 0$ and $1 < p < \infty$. See \cite{bw, li} for more results about generalized Hilbert operator  $\mathcal{H}_b$.

\subsection{Main results and an open problem}

Motivated by the aforementioned conclusions, this paper gives multiple upper norm bounds and one lower norm bound for the generalized Hilbert operator $\mathcal{H}_b$ acting on weighted Bergman spaces $A_{\alpha}^p$, where the weight index $\alpha$ satisfies $\alpha\geq0$ and $-1<\alpha<0$. With appropriate constraints imposed on the involved parameters, we obtain the exact operator norm equality
\begin{align} \label{t118899}
\|\mathcal{H}_b\|_{A_{\alpha}^p\rightarrow A_{\alpha}^p}= B\Big(\frac{\alpha+2}{p},~b+1-\frac{\alpha+2}{p}\Big).
\end{align}
This identity is valid for all $\alpha > -1$ and $p\geq2(\alpha+2)$; see Corollary \ref{cor1} and Corollary \ref{cor4.1} for details. The main contributions of the present paper can be   summarized into three aspects, as listed below.
\begin{enumerate}

\item Our theorems adopt weaker hypotheses compared with the common assumptions adopted in existing literature concerning Hilbert operators. Specifically, for Case $\bf{(ii)}$ of Theorem \ref{thm3}, the admissible interval of parameter $p$ extends beyond the valid scope derived from classical results (see \cite[Theorem 1.2]{Bo}). Meanwhile, the characterization presented in Theorem \ref{thm7} possesses finer structural properties relative to the counterpart conclusion stated in \cite[Theorem 1.3]{ka}. Furthermore, the established results contain several classical theorems as special cases.

\item Our proofs rely on integral representations, backward shift operators, identities of the Beta function, and fundamental properties of hypergeometric functions. Furthermore, the arguments for Theorem \ref{thm3} $(\mathbf{v})$ and Theorem \ref{thm7}$(\mathbf{iv})$ differ essentially from the treatment of the classical Hilbert operator $\mathcal{H}$,  where novel estimation techniques and structural constructions are incorporated within the   main steps of the proof.

\item  Theorem \ref{thm3} together with Theorem \ref{thm7} indicates that the generalized operator $\mathcal{H}_b$ is more than a parametric generalization of $\mathcal{H}$. The parameter $b$ imposes essential structural modifications on the operator itself.

\end{enumerate}

To conclude this section, we  formulate  an open problem   as follows.

  \textbf{Open Problem.}  Determine the full range of parameters such that the equality (\ref{t118899})
holds true.

Throughout this paper, we  adopt standard notation as follows: the relation $f \lesssim g$ signifies that there exists a positive constant $C$ satisfying $f \leq Cg$, while $f \asymp g$ holds if and only if both $f \lesssim g$ and $g \lesssim f$ are valid.

\section{Preliminary}

In this section, we give some definitions  and preliminary results, which will used in the proof of main results in this paper.

\subsection{Hardy and weighted Bergman spaces}

 For \( 0 < p \leq \infty \), the Hardy space \( H^p \) consists of those functions \( f \in \mathrm{H}(\mathbb{D}) \) for which
\begin{equation*}
\|f\|_{H^p} = \sup_{0 < r < 1} M_p(r, f) < \infty.
\end{equation*} Here  $M_{\infty}(r, f) = \sup_{|z|=r} |f(z)|$ and
\begin{align*}
M_p(r, f) = \left( \frac{1}{2\pi} \int_{-\pi}^{\pi} |f(re^{i\theta})|^p \dd\theta \right)^{\frac{1}{p}}, \quad 0 < p < \infty.
\end{align*}

Let \( 0 < p < \infty \)  and \( -1 < \beta < \infty \). The weighted Bergman space \( A^p_\beta  \) is the space of all \( f \in \mathrm{H}(\mathbb{D}) \) such that
\[
\|f\|^p_{ A^p_\beta } =  \int_{\mathbb{D}} |f(z)|^p \, \dd A_\beta(z) < \infty,
\]
where \(\dd A(z) \) is the normalized area measure on \( \mathbb{D} \) and \( \dd A_\beta(z) = (\beta + 1)(1 - |z|^2)^\beta \, \dd A(z) \).
One easily verifies that $f \in A_\alpha^p$ if and only if
$
 \int_{0}^{1}(1-r)^{\alpha}M_{p}^{p}(r,f)\mathrm{d}r<\infty.
$

\subsection{Beta   and hypergeometric function}  Recall  the Beta function $B(s,~t)$ is defined by
\[
B(s,~t) = \int_0^1 x^{s-1}(1-x)^{t-1}\mathrm{d}x = \int_0^\infty \frac{x^{s-1}}{(x+1)^{s+t}}\mathrm{d}x,
\]
which is valid for all complex parameters $s,t$ satisfying $\operatorname{Re} s > 0$ and $\operatorname{Re} t > 0$. It is well known that
\[
B(s,~t) = \frac{\Gamma(s)\Gamma(t)}{\Gamma(s+t)},
\]
where the Gamma function $\Gamma(z)$ is defined by
\[
\Gamma(z) = \int_0^\infty t^{z-1}e^{-t}\mathrm{d}t = \int_0^1 \Big(\ln \frac{1}{x}\Big)^{z-1} \mathrm{d}x,\quad \operatorname{Re} z > 0.
\]

The Gauss hypergeometric series is defined as
\[
F(a,b,c;z)=\sum_{k=0}^{\infty}\frac{\Gamma(k+a)\Gamma(k+b)\Gamma(c)}{\Gamma(a)\Gamma(b)\Gamma(k+c)}\cdot\frac{z^k}{k!},\quad z\in\mathbb{D}.
\]
This hypergeometric function  has the following  integral representation (see \cite{Bo})
\begin{align}\label{eq8}
F(a,b,c;z)=\frac{1}{B(a,c-a)}\int_0^1\frac{t^{a-1}(1-t)^{c-a-1}}{(1-tz)^b}\mathrm{d}t,\quad \operatorname{Re} c>\operatorname{Re} a>0.
\end{align}

\begin{lemma}\label{lem2.1}
    Let $0 < x < b+1$ and $y > 0$. Then
\begin{align*}
B(x+y, ~b+1-x) = \frac{B(x,~b+1-x)B(y,~b+1)}{ B(x,~y)}.
\end{align*}
  \end{lemma}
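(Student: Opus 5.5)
The identity is purely algebraic, so the plan is to write every Beta function through the Gamma function via $B(s,t)=\Gamma(s)\Gamma(t)/\Gamma(s+t)$ and compare both sides. First I check that all Beta functions are well defined under the hypotheses $0<x<b+1$ and $y>0$. On the left the arguments are $x+y>0$ and $b+1-x>0$. On the right the arguments are $x$, $b+1-x$, $y$, $b+1$, $x$, $y$, all positive. So every Gamma value appearing is finite and positive, and no division by zero occurs.

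Next I expand the two sides. The left side is
\[
B(x+y,~b+1-x)=\frac{\Gamma(x+y)\Gamma(b+1-x)}{\Gamma(b+1+y)}.
\]
For the right side I write
\[
\frac{B(x,~b+1-x)B(y,~b+1)}{B(x,~y)}
=\frac{\Gamma(x)\Gamma(b+1-x)}{\Gamma(b+1)}\cdot\frac{\Gamma(y)\Gamma(b+1)}{\Gamma(b+1+y)}\cdot\frac{\Gamma(x+y)}{\Gamma(x)\Gamma(y)}.
\]
The factors $\Gamma(x)$, $\Gamma(y)$ and $\Gamma(b+1)$ cancel. What remains is $\Gamma(b+1-x)\Gamma(x+y)/\Gamma(b+1+y)$, which is exactly the left side.

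There is no real obstacle here. The only point requiring care is the positivity check above, which guarantees that the Gamma representation applies to each factor and that the cancellations are legitimate.
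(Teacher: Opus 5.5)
Your proposal is correct and follows essentially the same route as the paper: expand each Beta function via $B(s,t)=\Gamma(s)\Gamma(t)/\Gamma(s+t)$ and cancel $\Gamma(x)$, $\Gamma(y)$, $\Gamma(b+1)$. Your explicit check that every argument is positive under $0<x<b+1$, $y>0$ is a small addition that the paper leaves implicit.
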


  \begin{proof}   By the properties of the Beta function and Gamma function, we have
\begin{align*}
B(x+y, ~b+1-x) &= \frac{\Gamma(x+y)\Gamma(b+1-x)}{\Gamma(y+b+1)}  = \frac{\Gamma(x+y)\Gamma(b+1-x)\Gamma(x)\Gamma(b+1)\Gamma(y)}{\Gamma(y+b+1)\Gamma(x)\Gamma(b+1)\Gamma(y)} \\[0.1cm]
&=\frac{B(x,~ b+1-x)B(y, ~b+1)}{ B(x,~ y)},
\end{align*}
as desired.
  \end{proof}

  \begin{lemma}[\cite{Bh}]\label{lem2.2}
    Let $0 < x \leq 1$. Then
\begin{align*}
\frac{x + y - xy}{xy} \leq B(x,~ y) \leq \frac{x + y}{xy(1 + xy)}
\end{align*}
for all $0 < y \leq 1$, and
\begin{align*}
\frac{x + y}{xy(1 + xy)} \leq B(x,~ y) \leq \frac{x + y - xy}{xy}
\end{align*}
for all $y > 1$.
  \end{lemma}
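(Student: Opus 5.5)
The plan is to reduce both chains of inequalities to statements about a single normalized Gamma quotient, and then use convexity in $y$ with $x$ fixed. Put
\[
h(x,y)=\frac{xy}{x+y}\,B(x,~y)=\frac{\Gamma(1+x)\Gamma(1+y)}{\Gamma(1+x+y)} .
\]
Since $xy/(x+y)>0$, the claimed bounds are equivalent to
\[
\frac{x+y-xy}{x+y}\le h(x,y)\le \frac{1}{1+xy}\quad (0<y\le 1),
\]
with both inequalities reversed for $y>1$. (For $y>1$ we have $x+y-xy=x+y(1-x)>0$, so the left-hand side stays positive.)

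The key observation is that all three quantities agree at the endpoints.
\begin{itemize}
\item At $y=1$ they all equal $1/(1+x)$.
\item As $y\to 0^+$ they all tend to $1$.
\end{itemize}
So, for fixed $0<x\le 1$, it suffices to study
\[
\Phi(y)=\log h(x,y)+\log(1+xy),\qquad \Psi(y)=\log h(x,y)+\log(x+y)-\log(x+y-xy)
\]
on $[0,\infty)$. Each vanishes at $y=0$ and $y=1$. The upper bound is the sign pattern $\Phi\le 0$ on $(0,1]$ and $\Phi\ge 0$ on $(1,\infty)$. The lower bound is the sign pattern $\Psi\ge 0$ on $(0,1]$ and $\Psi\le 0$ on $(1,\infty)$.

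For the upper bound I will show that $\Phi$ is convex on $(0,\infty)$. A convex function vanishing at $0$ and $1$ is $\le 0$ between them and $\ge 0$ beyond $1$, which is exactly the required pattern. Writing $\psi=\Gamma'/\Gamma$, we get
\[
\Phi''(y)=\psi'(1+y)-\psi'(1+x+y)-\frac{x^2}{(1+xy)^2}.
\]
I will use the series $\psi'(t)=\sum_{k\ge0}(t+k)^{-2}$ and compare it termwise, or through the integral representation $\psi'(t)=\int_0^\infty \frac{s e^{-ts}}{1-e^{-s}}\,\dd s$. The aim is to show that the difference $\psi'(1+y)-\psi'(1+x+y)$ dominates $x^2/(1+xy)^2$ whenever $0<x\le1$.

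A first attempt is the mean value theorem. It gives
\[
\psi'(1+y)-\psi'(1+x+y)=x\,\bigl(-\psi''(\xi)\bigr),\qquad \xi\in(1+y,~1+x+y),
\]
together with the standard bound $-\psi''(t)>1/t^2+1/t^3$. One then needs $x\,\bigl(\xi^{-2}+\xi^{-3}\bigr)\ge x^2(1+xy)^{-2}$. Since $\xi\le 2+y$, $x\le 1$ and $1+xy\ge x(1+y)$, this has to be checked by elementary algebra. If the crude estimate falls short for small $y$, I will instead keep the first terms of the series exactly and bound the tail by an integral.

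For the lower bound I will treat $\Psi$ the same way, but now aim for concavity on $(0,\infty)$, since $\Psi$ must be $\ge 0$ between its zeros and $\le 0$ after $1$. Here
\[
\Psi''(y)=\psi'(1+y)-\psi'(1+x+y)-\frac{1}{(x+y)^2}+\frac{(1-x)^2}{(x+y-xy)^2}.
\]
The required inequality $\Psi''\le 0$ amounts to an upper bound for the same digamma difference. That bound should follow from $\psi'(t)<1/t+1/(2t^2)+1/(6t^3)$ combined with the lower bound $\psi'(t)>1/t+1/(2t^2)$.

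I expect the main obstacle to be these two digamma-difference estimates, uniformly in $y\in(0,\infty)$ and $x\in(0,1]$, particularly near $y=0$, where the rational terms are largest. Should global convexity or concavity fail, the fallback is to show that $\Phi'$ and $\Psi'$ change sign at most once on $(0,\infty)$. That weaker property, together with the two zeros at $0$ and $1$, still forces the required sign patterns.
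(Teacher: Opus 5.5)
The paper does not prove this lemma; it is quoted from \cite{Bh}, so your argument has to stand on its own. It does not, because its central claim is false: for every $0<x<1$, $\Phi$ is not convex and $\Psi$ is not concave on $(0,\infty)$. Since $\psi'(1+y)-\psi'(1+x+y)=x\,y^{-2}+O(y^{-3})$, while $x^2(1+xy)^{-2}=y^{-2}+O(y^{-3})$ and $(1-x)^2(x+y-xy)^{-2}-(x+y)^{-2}=O(y^{-3})$, one gets
\[
\Phi''(y)=-(1-x)\,y^{-2}+O(y^{-3}),\qquad \Psi''(y)=x\,y^{-2}+O(y^{-3})\qquad (y\to\infty).
\]
Numerically, for $x=\tfrac12$ and $y=3$, $\Phi''(3)\approx-0.005$ and $\Psi''(3)\approx0.016$. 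This cannot be patched by sharper estimates. We have $\Phi=(1-x)\log y+O(1)\to+\infty$ and $\Psi=-x\log y+O(1)\to-\infty$, and a convex (resp.\ concave) function that is $o(y)$ cannot tend to $+\infty$ (resp.\ $-\infty$). So the digamma-difference inequalities you plan to prove are false for large $y$. The convexity route can at best handle $0<y\le1$. The halves with $y>1$ then rest entirely on your fallback (``$\Phi'$, $\Psi'$ change sign at most once''), for which you give no argument. That fallback is the whole difficulty, not a technicality, and it would also need the initial sign, e.g.\ $\Phi'(0)=x-\psi(1+x)+\psi(1)<0$.

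Both halves can be repaired. The bounds involving $\frac{x+y-xy}{xy}=\frac1x+\frac1y-1$ need no calculus:
\[
B(x,y)-\frac{x+y-xy}{xy}=\int_0^1\bigl(t^{x-1}-1\bigr)\bigl((1-t)^{y-1}-1\bigr)\,\mathrm{d}t .
\]
Here the first factor is $\ge0$ because $x\le1$, and the second is $\ge0$ for $y\le1$ and $\le0$ for $y>1$. For the bounds involving $\frac{x+y}{xy(1+xy)}$, vary $x$ instead of $y$. For every $y$, $\Phi(x,y)=\log h(x,y)+\log(1+xy)$ vanishes at $x=0$ and at $x=1$, since $h(1,y)=\frac1{1+y}$. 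So on the compact interval $0\le x\le1$ it suffices that $\partial_x^2\Phi=\psi'(1+x)-\psi'(1+x+y)-\frac{y^2}{(1+xy)^2}$ is $\ge0$ for $y\le1$ and $\le0$ for $y\ge1$. To see this, write $\partial_x^2\Phi=\int_0^\infty s e^{-xs}K_y(s)\,\mathrm{d}s$ with $K_y(s)=\frac{e^{-s}-e^{-(1+y)s}}{1-e^{-s}}-e^{-s/y}$. By Descartes' rule applied to $1-u^{y}-u^{1/y-1}+u^{1/y}$ with $u=e^{-s}$, $K_y$ changes sign exactly once: from $-$ to $+$ if $y<1$, and from $+$ to $-$ if $y>1$. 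Multiplying by the increasing factor $e^{(1-x)s}$ then reduces the claim to $x=1$. There $F(y)=\psi'(2)-\psi'(2+y)-\frac{y^2}{(1+y)^2}$ satisfies $F(0)=F(1)=0$. Moreover $F'(y)=\int_0^\infty s e^{-(1+y)s}\bigl(\frac{s}{1-e^{-s}}-2\bigr)\,\mathrm{d}s$ changes sign only once, from $+$ to $-$, which gives exactly the required sign pattern.
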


   \begin{lemma}[\cite{liu}]\label{l223}
For \( a \in \mathbb{R} \) and \( t > -1 \), we have
\[
\int_{\mathbb{D}} \frac{(1 - |\zeta|^2)^t}{|1 -  \bar{ \zeta} \eta  |^{2a}}\mathrm{d}  A(\zeta )
= \frac{  \Gamma(1+t)}{\Gamma(2+t)}
\, F(a,a, 2+t; |\eta|^2)
\]
holds for all \( \eta \in \mathbb{D}\).
  \end{lemma}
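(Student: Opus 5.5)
The plan is to compute the integral directly, expanding the kernel in a double power series and using orthogonality of the monomials on circles. Write $\eta\in\mathbb{D}$ and set $a$ real. For $|\zeta|<1$ we have $|\bar\zeta\eta|<1$, so
\[
(1-\bar\zeta\eta)^{-a}=\sum_{n=0}^\infty \frac{\Gamma(n+a)}{\Gamma(a)\,n!}\,\bar\zeta^n\eta^n ,
\]
and $|1-\bar\zeta\eta|^{-2a}=(1-\bar\zeta\eta)^{-a}(1-\zeta\bar\eta)^{-a}$ is the product of this series with its complex conjugate. (If $a$ is a nonpositive integer the series terminates; the formula is then interpreted with the usual convention $\Gamma(n+a)/\Gamma(a)=(a)_n$, and the argument is identical.)

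Next I pass to polar coordinates $\zeta=re^{i\theta}$, with $\mathrm{d}A(\zeta)=\frac{1}{\pi}r\,\mathrm{d}r\,\mathrm{d}\theta$. For fixed $r<1$ the double series converges absolutely and uniformly in $\theta$, so I may integrate term by term in $\theta$. Orthogonality $\frac{1}{2\pi}\int_0^{2\pi}e^{i(m-n)\theta}\mathrm{d}\theta=\delta_{mn}$ kills the off-diagonal terms. This leaves
\[
\int_{\mathbb{D}}\frac{(1-|\zeta|^2)^t}{|1-\bar\zeta\eta|^{2a}}\,\mathrm{d}A(\zeta)=\sum_{n=0}^\infty\Big(\frac{(a)_n}{n!}\Big)^2|\eta|^{2n}\int_0^1 2r^{2n+1}(1-r^2)^t\,\mathrm{d}r .
\]
Interchanging the $r$-integral with the sum is justified by Tonelli, since all diagonal terms are nonnegative.

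Substituting $s=r^2$ turns the radial integral into
\[
B(n+1,t+1)=\frac{n!\,\Gamma(t+1)}{\Gamma(n+t+2)} .
\]
Hence the $n$-th coefficient becomes
\[
\frac{\Gamma(t+1)}{\Gamma(t+2)}\cdot\frac{(a)_n(a)_n}{(t+2)_n\,n!} .
\]
Comparing with the definition of $F(a,a,2+t;z)$ given above, with $z=|\eta|^2$, this yields exactly $\frac{\Gamma(1+t)}{\Gamma(2+t)}F(a,a,2+t;|\eta|^2)$. This hypergeometric series converges because $|\eta|^2<1$.

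The only step needing care is the term-by-term integration in $\theta$ at fixed $r$. It is safe because $|\bar\zeta\eta|\le|\eta|<1$ gives a uniform geometric majorant. The interchange with the $r$-integral is then handled by nonnegativity. The condition $t>-1$ ensures each Beta integral is finite.
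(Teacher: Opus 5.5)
Your proof is correct and complete. Expanding $(1-\bar\zeta\eta)^{-a}$ and its conjugate, integrating in $\theta$ (justified by the majorant $\sum_{m,n}\frac{(|a|)_m(|a|)_n}{m!\,n!}|\eta|^{m+n}=(1-|\eta|)^{-2|a|}<\infty$), applying Tonelli to the nonnegative diagonal terms, and evaluating $\int_0^1 2r^{2n+1}(1-r^2)^t\,\mathrm{d}r=B(n+1,t+1)$ gives exactly the coefficients $\frac{\Gamma(1+t)}{\Gamma(2+t)}\frac{(a)_n(a)_n}{(2+t)_n\,n!}$ of the stated hypergeometric series.

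The paper gives no proof of this lemma and only cites Liu. Your argument is the standard derivation of this Forelli--Rudin type identity, so there is nothing further to compare.
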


 \subsection{Boundedness of generalized Hilbert operator on weighted Bergman spaces} Let $\alpha>-1$, $b\geq 0$, $1< p< \infty$ and  $f \in A_{\alpha}^{p}$.
 Define the integral operator $\mathcal{T}_b$ via
\begin{equation} \nonumber
\mathcal{T}_{b}(f)(z)=\int_{0}^{1}\frac{f(t)(1-t)^{b}}{(1-tz)^{b+1}}\,\mathrm{d}t, ~~z\in  \mathbb{D}.
\end{equation}
Under the condition $ \alpha+2<p(b+1)$,   from \cite[Theorem 4.14]{Zhu2} we get
\begin{align*}
\left | \mathcal{T}_{b}(f)(z)  \right |  \le \int_{0}^{1}\frac{\left | f(t) \right |(1-t)^{b} }{\left | 1-tz \right |^{b+1} } \,\mathrm{d}t \lesssim \frac{1}{(1-\left | z \right |)^{b+1}}\left(\int_{0}^{1}\frac{1}{(1-t)^{\frac{\alpha+2}{p}-b}}\,\mathrm{d}t\right)\left \|  f\right \|_{A_{\alpha}^{p}}<+\infty.
\end{align*}

Take any $f(z)=\sum_{n=0}^{\infty}a_{n}z^{n}\in A_{\alpha}^{p}$ and denote its $N$-th partial sum by $f_{N}(z)=\sum_{n=0}^{N}a_{n}z^{n}$. Direct computation yields
\begin{align*}
\mathcal{H}_{b}(f_{N})(z)&=\sum_{n=0}^{\infty} \left( \sum_{k=0}^{N}\frac{\Gamma(n+b+1)\Gamma(n+k+1)}{\Gamma(n+1)\Gamma(n+k+b+2)} a_{k}\right)z^{n}\\[0.1cm]
&=\sum_{n=0}^{\infty} \frac{\Gamma(n+b+1)}{\Gamma(n+1)\Gamma(b+1)}\left( \sum_{k=0}^{N}\left(\int_{0}^{1}(1-t)^{b}t^{n+k}\,\mathrm{d}t\right) a_{k} \right) z^{n}\\[0.1cm]
&=\sum_{n=0}^{\infty} \frac{\Gamma(n+b+1)}{\Gamma(n+1)\Gamma(b+1)}\int_{0}^{1}(1-t)^{b}f_{N}(t)(tz)^{n}\,\mathrm{d}t \\[0.1cm]
&=\mathcal{T}_{b}(f_{N})(z).
\end{align*}
The identity above implies that $\mathcal{H}_{b}$ is well defined on the set of all analytic polynomials. For arbitrary $z\in\mathbb{D}$ and $ \alpha+2<p(b+1)$, we have
\begin{align*}
&\left | \mathcal{T}_{b}(f)(z)- \sum_{n=0}^{\infty} \left( \sum_{k=0}^{N}\frac{\Gamma(n+b+1)\Gamma(n+k+1)}{\Gamma(n+1)\Gamma(n+k+b+2)} a_{k}\right)z^{n} \right |\\[0.1cm]
\lesssim &  \frac{1}{(1-\left | z \right | )^{b+1}}\int_{0}^{1}\left | f(t)-f_{N}(t) \right |(1-t)^{b}\,\mathrm{d}t \\[0.1cm]
\lesssim &\frac{1}{(1-\left | z \right | )^{b+1}}\left(\int_{0}^{1}\frac{1}{(1-t)^{\frac{\alpha+2}{p}-b}}\,\mathrm{d}t\right)\left \| f-f_{N} \right \|_{A_{\alpha}^{p}} .
\end{align*}
As $N \to \infty$, the partial series
$
\sum_{n=0}^{\infty} \left( \sum_{k=0}^{N}\frac{\Gamma(n+b+1)\Gamma(n+k+1)}{\Gamma(n+1)\Gamma(n+k+b+2)} a_{k}\right)z^{n}
$
converges pointwise on $\mathbb{D}$. Moreover,
\begin{align}\nonumber
  \mathcal{H}_{b}(f)(z)=\mathcal{T}_{b}(f)(z)=\int_{0}^{1}\frac{f(t)(1-t)^{b}}{(1-tz)^{b+1}}\,\mathrm{d}t,
\end{align}
 holds for all $f\in A_{\alpha}^{p}$ subject to the condition $ \alpha+2<p(b+1)$.

 \begin{lemma}\cite[Lemma 3]{ggps} \label{lem211}
 Assume that $0<p<\infty$ and $\alpha>-1$. Then there exists a positive constant $C=C(p,\alpha)$ such that
 $$
 \int_0^1 M^p_\infty(r,f)(1-r)^{\alpha+1}\,dr  \le C ||f||^p_{A^p_\alpha},  \quad\text{for all} ~~~ f\in \mathrm{H}(\mathbb{D}) .
 $$
 \end{lemma}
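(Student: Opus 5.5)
The statement is Lemma \ref{lem211}, quoted from \cite[Lemma 3]{ggps}:
\[
\int_0^1 M_\infty^p(r,f)(1-r)^{\alpha+1}\,dr\le C\|f\|_{A^p_\alpha}^p .
\]
The plan is to dominate $M_\infty(r,f)$ by a local area average of $|f|^p$ over a Whitney-type region near radius $r$, and then exchange the order of integration.

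\textbf{Step 1 (pointwise bound).} Fix $z$ with $|z|=r$, and assume $r\ge 1/2$; the range $r<1/2$ is handled separately at the end. Let $\delta=(1-r)/2$. Since $|f|^p$ is subharmonic for every $0<p<\infty$, the sub-mean value property on the disc $D(z,\delta)$ gives
\[
|f(z)|^p\le \frac{1}{\delta^2}\int_{D(z,\delta)}|f(w)|^p\,dA(w).
\]
On $D(z,\delta)$ we have $1-|w|\asymp 1-r$. Hence
\[
|f(z)|^p\lesssim (1-r)^{-2-\alpha}\int_{D(z,\delta)}|f(w)|^p(1-|w|^2)^\alpha\,dA(w).
\]

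\textbf{Step 2 (enlarge the region).} Enlarge $D(z,\delta)$ to the full annulus $\{w:\ |\,|w|-r\,|<\delta\}$; this removes the dependence on the argument of $z$. Taking the supremum over $|z|=r$ and writing the annulus integral in polar coordinates gives
\[
M_\infty^p(r,f)\lesssim (1-r)^{-2-\alpha}\int_{|s-r|<(1-r)/2} M_p^p(s,f)(1-s)^\alpha\,ds .
\]
The factor $s$ from polar coordinates is harmless, since $s\asymp 1$ here.

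\textbf{Step 3 (Fubini).} Multiply by $(1-r)^{\alpha+1}$ and integrate over $r\in[1/2,1)$, then swap the order of integration. For a fixed $s$, the admissible $r$ satisfy $1-r\asymp 1-s$ and range over an interval of length $\lesssim 1-s$. The $r$-integral therefore contributes $(1-s)^{-1}\cdot(1-s)=O(1)$, and we obtain
\[
\int_{1/2}^1 M_\infty^p(r,f)(1-r)^{\alpha+1}\,dr\lesssim \int_0^1 M_p^p(s,f)(1-s)^\alpha\,ds\asymp\|f\|_{A^p_\alpha}^p .
\]
The final comparison with the norm is the polar-coordinate equivalence recorded in Section 2.

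\textbf{Step 4 ($r<1/2$).} For $r<1/2$, $M_\infty(r,f)\le M_\infty(1/2,f)$. By the sub-mean value property on discs of radius $1/4$, this is at most a constant times $\|f\|_{A^p_\alpha}$, which handles the remaining range.

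The main obstacle is the bookkeeping in Step 3. One must check that the condition $|s-r|<(1-r)/2$ forces $1-r\asymp 1-s$ with uniform constants, so that the Fubini exchange produces the weight exactly $(1-s)^\alpha$. Everything else is a standard subharmonicity argument.
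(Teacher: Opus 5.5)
Your proof is correct and complete. The sub-mean-value bound on $D(z,(1-r)/2)$, the enlargement to the full annulus and the polar-coordinate step all hold for every $0<p<\infty$ and every $\alpha>-1$. The bookkeeping you flagged also works: for fixed $s$, the condition $|s-r|<(1-r)/2$ is equivalent to $\tfrac{2}{3}(1-s)<1-r<2(1-s)$. So the $r$-integral of $(1-r)^{-1}$ is at most $\ln 3$, and Fubini leaves exactly the weight $(1-s)^\alpha$ on $s\in(1/4,1)$, where $s\asymp 1$.

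The paper does not prove this lemma; it only quotes \cite[Lemma 3]{ggps}. A common shorter route, which may well be the argument in \cite{ggps}, uses the one-radius estimate $M_\infty^p(r,f)\lesssim (1-r)^{-1}M_p^p\bigl(\tfrac{1+r}{2},f\bigr)$. This comes from applying the Hardy-space point-evaluation bound $|g(\zeta)|^p\lesssim \|g\|_{H^p}^p/(1-|\zeta|)$ to the dilation $g(\zeta)=f\bigl(\tfrac{1+r}{2}\zeta\bigr)$, whose $H^p$ norm is $M_p\bigl(\tfrac{1+r}{2},f\bigr)$. Multiplying by $(1-r)^{\alpha+1}$ and substituting $\rho=\tfrac{1+r}{2}$ then gives the lemma at once. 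That route needs no Fubini exchange, and it treats $r<1/2$ automatically, so your Step 4 is not needed there.

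Your argument replaces the single radius $\tfrac{1+r}{2}$ by an average of $M_p^p(s,f)$ over the window $|s-r|<(1-r)/2$, which is why you need the exchange of integrals. In return it uses nothing beyond subharmonicity of $|f|^p$ on small discs, so it is fully self-contained and does not rely on the $H^p$ growth estimate.
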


Next, we give a characterization of the boundedness of $\mathcal{H}_{b}$ on $\apa$. Specifically, a necessary and sufficient condition is provided.

\begin{proposition}
Let $\alpha>-1$, $b\geq 0$ and $1< p< \infty$. Then $\mathcal{H}_{b}$ is bounded on $\apa$ if and only if $$\alpha+2<p(b+1).$$
\end{proposition}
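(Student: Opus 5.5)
Sufficiency and necessity are handled separately. Sufficiency uses the integral representation $\mathcal{H}_b(f)=\mathcal{T}_b(f)$, which holds on $\apa$ under $\alpha+2<p(b+1)$. Necessity uses a family of test functions on which the defining coefficient series diverges or the output norm blows up.

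\emph{Sufficiency.} Assume $\alpha+2<p(b+1)$ and write $\mathcal{H}_b f(z)=\int_0^1 f(t)(1-t)^b(1-tz)^{-(b+1)}\,\mathrm{d}t$. I will use the standard path-change trick from the classical Hilbert operator (Diamantopoulos, Karapetrović). For fixed $z$, integrate along the curve
\[
t=t_z(s)=\frac{s}{(s-1)z+1},\qquad 0\le s\le 1,
\]
which joins $0$ to $1$ inside $\mathbb D$. Then $1-t_z(s)=\frac{1-s}{1-(1-s)z}$ and $1-t_z(s)z=\frac{1-z}{1-(1-s)z}$, and a direct computation gives
\[
\mathcal{H}_b f(z)=\int_0^1 \frac{(1-s)^b}{(1-(1-s)z)}\,\frac{1}{1-z}\,\cdots\, f(\phi_s(z))\,\mathrm{d}s,
\]
that is, a weighted average $\int_0^1 \omega_s(z) f(\phi_s(z))\,\mathrm{d}s$. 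Here $\phi_s(z)=\frac{s}{(s-1)z+1}$ are self-maps of $\mathbb D$, and the weights $\omega_s(z)$ are bounded by $(1-s)^b s^{-1}$ times powers of $|1-(1-s)z|^{-1}$; I will carry out the exact bookkeeping. Minkowski's inequality then gives
\[
\|\mathcal{H}_b f\|_{A^p_\alpha}\le\int_0^1 \|\omega_s\cdot f\circ\phi_s\|_{A^p_\alpha}\,\mathrm{d}s .
\]
Each term is controlled by the change of variables $w=\phi_s(z)$ together with the standard estimate for weighted composition operators induced by these linear fractional maps. This yields a bound of the form $s^{\frac{\alpha+2}{p}-1}(1-s)^{b-\frac{\alpha+2}{p}}\|f\|_{A^p_\alpha}$, whose integral over $[0,1]$ is $B\big(\frac{\alpha+2}{p},\,b+1-\frac{\alpha+2}{p}\big)$. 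This integral is finite exactly because $0<\frac{\alpha+2}{p}<b+1$.

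If this exact computation becomes messy, I have a cruder fallback that is enough for boundedness. I will use $|\mathcal{H}_b f(z)|\le\int_0^1 |f(t)|(1-t)^b|1-tz|^{-b-1}\,\mathrm{d}t$ together with the pointwise bound $|f(t)|\le M_\infty(t,f)$. Then Lemma \ref{lem211}, Hölder's inequality with a power weight $(1-t)^{\epsilon}$, and the Forelli--Rudin type integral of Lemma \ref{l223} estimate $\int_{\mathbb D}|1-tz|^{-(b+1)p}\,\mathrm{d}A_\alpha(z)\asymp(1-t)^{\alpha+2-(b+1)p}$, which is a Schur test. The delicate point is choosing $\epsilon$ so that every exponent stays admissible. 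I expect the condition $\alpha+2<p(b+1)$ to be exactly what leaves room for such an $\epsilon$, and this step is the main obstacle.

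\emph{Necessity.} Suppose $\alpha+2\ge p(b+1)$. Take $f_\gamma(z)=(1-z)^{-\gamma}$ with $\gamma<\frac{\alpha+2}{p}$, so that $f_\gamma\in A^p_\alpha$, and choose $\gamma$ close to $\frac{\alpha+2}{p}$. Then $\gamma$ can be taken with $\gamma\ge b+1$ (or approaching it). The coefficients of $f_\gamma$ are positive, $a_k\asymp k^{\gamma-1}$, while the matrix entries satisfy $\frac{\Gamma(n+k+1)}{\Gamma(n+k+b+2)}\asymp (n+k)^{-b-1}$. Hence the zeroth output coefficient $\sum_k a_k (k+1)^{-b-1}$ behaves like $\sum_k k^{\gamma-b-2}$, which diverges when $\gamma\ge b+1$. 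So $\mathcal{H}_b f_\gamma$ is not even defined, and $\mathcal{H}_b$ cannot be bounded.

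This construction needs $\gamma\ge b+1$ with $\gamma<\frac{\alpha+2}{p}$, which requires the strict inequality $\alpha+2>p(b+1)$. In the borderline case $\alpha+2=p(b+1)$ I will instead use a logarithmic modification, $f(z)=(1-z)^{-(b+1)}\log^{-\delta}\frac{e}{1-z}$ with $1/p<\delta\le 1$. This function lies in $A^p_\alpha$, yet its coefficients behave like $k^{b}\log^{-\delta}k$, so $\sum_k k^{-1}\log^{-\delta}k=\infty$ and the same divergence occurs. Alternatively, one can apply the operator to the bounded-norm family $(1-z)^{-\gamma}$ with $\gamma\uparrow b+1$ and show that $\|\mathcal{H}_b f_\gamma\|/\|f_\gamma\|\to\infty$.
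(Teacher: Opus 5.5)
Your necessity half is fine. It uses the same test-function idea as the paper, except that the paper evaluates $\mathcal{H}_b f$ at $r=\tfrac12$ where you read off the zeroth coefficient. Your separate treatment of $\alpha+2=p(b+1)$ is actually more careful than the paper's, whose divergence claim only covers $b+1\le\frac{\alpha+2}{p}-\delta$. Your ratio option is the cleanest, but the family is not norm-bounded: $\|(1-z)^{-\gamma}\|_{\apa}\asymp(b+1-\gamma)^{-1/p}$. Since $\mathcal{H}_b f_\gamma(0)=(b+1-\gamma)^{-1}$, the ratio still blows up because $p>1$.

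The genuine gap is sufficiency. Your main route is only announced. The formula is left with a "$\cdots$", and $1-t_z(s)=\frac{(1-s)(1-z)}{1-(1-s)z}$, so the weight is exactly $w_s(z)=\frac{(1-s)^b}{1-(1-s)z}$, with no factor $\frac1{1-z}$. More seriously, the per-$s$ bound you announce is false in part of the range. When $b>0$ the hypothesis allows $p\le\alpha+2$. Write $T_sf=w_s\,(f\circ\phi_s)$. For $f\equiv1$,
$\|T_s1\|_{\apa}^p=(1-s)^{bp}\int_{\D}|1-(1-s)z|^{-p}\,\mathrm{d}A_\alpha(z)$,
which tends to $\|(1-z)^{-1}\|^p_{\apa}>0$ as $s\to0^+$ (or to $+\infty$ if $p=\alpha+2$). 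Meanwhile $s^{\frac{\alpha+2}{p}-1}(1-s)^{b-\frac{\alpha+2}{p}}$ tends to $0$ (respectively $1$), so no constant multiple of it dominates $\|T_s\|$. Getting an integrable bound for $\|T_s\|$ over the whole range is what the paper's Theorems \ref{thm3} and \ref{thm7} do. That takes several cases, a backward-shift decomposition for $p\le 2$, and composition-operator norms for $\alpha<0$, where the pointwise bound on $(1-|\phi_s^{-1}(z)|^2)^\alpha$ reverses. It is not bookkeeping.

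Your fallback, as described, fails for a structural reason. Let $g(t)=M_\infty(t,f)$ and $q=p/(p-1)$. Hölder with weight $(1-t)^\epsilon$ gives
\[
\Big(\int_0^1 g(t)(1-t)^b|1-tz|^{-b-1}\,\mathrm{d}t\Big)^p\le\Big(\int_0^1(1-t)^{-\epsilon q}\,\mathrm{d}t\Big)^{p/q}\int_0^1 g(t)^p(1-t)^{(b+\epsilon)p}|1-tz|^{-(b+1)p}\,\mathrm{d}t ,
\]
which needs $\epsilon q<1$. Integrating in $z$ with the Forelli--Rudin asymptotic leaves $\int_0^1 g^p(1-t)^{\alpha+2-p+\epsilon p}\,\mathrm{d}t$. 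This is dominated by $\int_0^1 g^p(1-t)^{\alpha+1}\,\mathrm{d}t$ only if $\epsilon q\ge1$. Indeed, for $f=(1-z)^{-\gamma}$ with $\gamma p$ close to $\alpha+2$ it is infinite. The hypothesis $\alpha+2<p(b+1)$ enters only through the Forelli--Rudin asymptotic, so it creates no room, and the collision occurs for either sign of $\epsilon$.

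You must split the kernel between the two Hölder factors, e.g. $[g(1-t)^{b+\epsilon}|1-tz|^{-(b+1-\delta)}]\cdot[(1-t)^{-\epsilon}|1-tz|^{-\delta}]$ with $\epsilon q<1<(\epsilon+\delta)q$. The second factor is then $\lesssim|1-z|^{1-(\epsilon+\delta)q}$. Choose $\epsilon<1/q$ with $(b+\epsilon)p>\alpha+1$, which is possible exactly when $\alpha+2<p(b+1)$, and take $\delta$ slightly larger than $1/q-\epsilon$. Then $\int_{\D}|1-tz|^{-(b+1-\delta)p}|1-z|^{(p-1)-(\epsilon+\delta)p}\,\mathrm{d}A_\alpha(z)\lesssim(1-t)^{\alpha+1-(b+\epsilon)p}$, and you land exactly on Lemma \ref{lem211}.

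Alternatively, follow the paper. Minkowski in $\theta$ gives $M_p(r,\mathcal{H}_bf)\lesssim\int_0^1(1-t)^bM_\infty(t,f)(1-tr)^{-(b+1-\frac1p)}\,\mathrm{d}t$. Then apply a one-dimensional Hardy-type inequality (\cite{JVA}): after absorbing the weights its kernel is comparable to one homogeneous of degree $-1$ in $(1-r,1-t)$, and it is bounded precisely when $\alpha+2<p(b+1)$. Finish with Lemma \ref{lem211}.
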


\begin{proof} We first assume the operator $\mathcal{H}_{b}$ is bounded on $A_{\alpha}^{p}$. Assume the contrary that $p(b+1) \le \alpha+2$, and take an arbitrary positive real number $\delta>0$. Define
$$
f(z) = (1-z)^{-\frac{\alpha+2}{p} + \delta}.
$$
It is straightforward to verify $f\in A^p_\alpha$. Direct computation yields
\begin{align} \label{ttt}
\mathcal{H}_b f(r) = \int_0^1 (1-s)^{b - \frac{\alpha+2}{p} + \delta} (1-sr)^{-(b+1)} \mathrm{d} s.
\end{align}
 By Proposition 4.13 of \cite{Zhu2}, we have
\begin{align*}
 \left | \mathcal{H}_b f(r) \right|\cdot (1-r^{2})^{\frac{\alpha+2}{p}} \lesssim \left \| \mathcal{H}_b f \right \|_{A_{\alpha}^{p}}
\end{align*}
holds for all $r\in(0,1)$. Substitute $r=\frac{1}{2}$ into (\ref{ttt}), then we obtain
\begin{align*}
\left \| \mathcal{H}_b f \right \|_{A_{\alpha}^{p}} \gtrsim  \left |  \mathcal{H}_b f (\tfrac12 ) \right|\cdot \left(1-\left(\tfrac12\right)^{2}\right)^{\frac{\alpha+2}{p}} \asymp \int_{0}^{1}(1-s)^{b-\frac{\alpha+2}{p}+\delta }\mathrm{d} s.
\end{align*}
The improper integral on the right-hand side diverges provided $b+1\le \frac{\alpha+2}{p}-\delta$. Since $\delta>0$ is chosen arbitrarily, the inequality $b+1\le \frac{\alpha+2}{p}$ necessarily leads to $$\left \| \mathcal{H}_b(f) \right \|_{A_{\alpha}^{p}}\to \infty.$$
 This contradicts the assumed boundedness of $\mathcal{H}_{b}$ acting on $A_{\alpha}^{p}$. Therefore, the boundedness of $\mathcal{H}_{b}$ on $A_{\alpha}^{p}$ enforces the strict inequality $p(b+1)>\alpha+2$.

Conversely, suppose $ \alpha+2<p(b+1)$. Let $f\in A^p_\alpha$.  Using Minkowski's inequality and the following well-known inequality
\[
\int_{0}^{2\pi} \frac{\dd\theta}{|1 - t e^{i\theta}|^s}
= O\!\Big(\frac{1}{(1-t)^{s-1}}\Big), \qquad s>1,
\]
 we get
\begin{align*}
M_{p}(r, \mathcal{H}_b(f) )&= \left ( \frac{1}{2 \pi}\int_{0}^{2 \pi}\left | \int_0^1f(t)\frac{(1-t)^b }{(1-tre^{i \theta})^{b+1}}\dd t \right |^{p}\dd \theta  \right )^{\frac{1}{p}}\nonumber \\[0.1cm]
  & \lesssim \int_{0}^{1}(1-t)^{b}\left | f(t) \right |\left ( \int_{0}^{2 \pi}\frac{1}{\left | 1-tr e^{i \theta} \right |^{p(b+1)}} \dd \theta \right )^{\frac{1}{p}}\dd t\nonumber \\[0.1cm]
  & \lesssim \int_{0}^{1}(1-t)^{b}\left | f(t) \right |\left ( \frac{1}{(1-tr)^{p(b+1)-1}} \right )^{\frac{1}{p}} \dd t\nonumber \\[0.1cm]
%  & =\int_{0}^{1}(1-t)^{b}\left | f(t) \right |\frac{1}{(1-tr)^{b+1-\frac{1}{p}}}  \dd t\nonumber \\[0.1cm]
  & \le \int_{0}^{1}(1-t)^{b}M_{\infty}(t,f)\frac{1}{(1-tr)^{b+1-\frac{1}{p}}}  \dd t.
\end{align*}
By Lemma \ref{lem211} and \cite[Proposition 7.3.2]{JVA}, we have
\begin{align*}\nonumber
\left \|   \mathcal{H}_{b}(f)  \right \|_{A^p_\alpha}^{p}&\asymp\int_{0}^{1}M_{p}^{p}\left ( r,  \mathcal{H}_{b}(f)   \right )\left ( 1-r \right )^{  \alpha } \dd r\nonumber \\[0.1cm]
&\lesssim \int_{0}^{1}\left ( \int_{0}^{1}(1-t)^{b}M_{\infty}(t,f)\frac{1}{(1-tr)^{b+1-\frac{1}{p}}}  \dd t \right )^{p} \left ( 1-r \right )^{\alpha} \dd r\nonumber \\[0.1cm]
&\lesssim \int_{0}^{1}M_{\infty}^{p}(r,f)\left ( 1-r \right )^{\alpha+1}\dd r  \lesssim   \left \|  f \right \|_{A^p_\alpha}^{p}<\infty.
\end{align*}
Consequently,  $\mathcal{H}_{b}$ is bounded on $\apa$. The proof is complete.
\end{proof}

\subsection{Some lemmas}

In this subsection, we collect and prove some lemmas, which will be used in the rest of this paper.

  \begin{lemma}[\cite{Da}]\label{lem308-5}
    Let $f(x)$ be nonnegative, continuously differentiable and monotonically increasing on the interval $[a,b)$. Suppose that $g(x)$ is continuous and integrable on $(a,b)$ satisfying
\begin{align*}
\int_a^b g(x) \dd x \leq 0.
\end{align*}
If $f(x)g(x)$ is integrable on $(a,b)$ and there exists some $c \in (a,b)$ such that $g(x) > 0$ for $x \in (a,c)$ and $g(x) \leq 0$ for $x \in [c,b)$, then
\begin{align*}
\int_a^b f(x)g(x) \dd x \leq 0.
\end{align*}
  \end{lemma}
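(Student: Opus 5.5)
The statement is Lemma \ref{lem308-5}: a second-mean-value-type inequality in which an increasing nonnegative weight $f$ is applied to a function $g$ that is positive to the left of $c$ and nonpositive to the right. The plan is to compare $f(x)$ with the constant $f(c)$. Because $g$ changes sign exactly once, at $c$, the difference $f(x)-f(c)$ will have the opposite sign to $g(x)$ on each side of $c$.

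First, I will write
\[
\int_a^b f(x)g(x)\,\dd x=\int_a^b \bigl(f(x)-f(c)\bigr)g(x)\,\dd x+f(c)\int_a^b g(x)\,\dd x .
\]
This splitting is legitimate because $fg$ and $g$ are both integrable on $(a,b)$, so $(f-f(c))g$ is integrable as well. The second term is at most $0$, since $f(c)\ge 0$ by nonnegativity of $f$ and $\int_a^b g\le 0$ by hypothesis.

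Next, I will check the sign of the first integrand on each side of $c$, using only that $f$ is monotonically increasing.
\begin{itemize}
\item For $x\in(a,c)$ we have $f(x)\le f(c)$ and $g(x)>0$, so $(f(x)-f(c))g(x)\le 0$.
\item For $x\in[c,b)$ we have $f(x)\ge f(c)$ and $g(x)\le 0$, so again $(f(x)-f(c))g(x)\le 0$.
\end{itemize}
Hence the first integral is at most $0$, and adding the two pieces gives $\int_a^b fg\le 0$.

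There is no genuine obstacle here. The only care needed is the integrability bookkeeping near the endpoints, where $g$ may be improperly integrable. This is handled by the assumed integrability of both $g$ and $fg$, so the splitting above is justified directly without any limiting argument. The continuous differentiability of $f$ is never used; monotonicity and nonnegativity suffice.
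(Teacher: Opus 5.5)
Your proof is correct and complete. Comparing $f$ with the constant $f(c)$ makes $(f(x)-f(c))g(x)\le 0$ pointwise on $(a,b)$. The remaining term $f(c)\int_a^b g\,\mathrm{d}x$ is nonpositive, and the splitting is justified by the integrability of $g$ and $fg$.

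The paper gives no proof of this lemma; it only quotes it from \cite{Da}. The $C^1$ hypothesis presumably points to an integration-by-parts argument:
\begin{itemize}
\item Set $G(x)=\int_x^b g(t)\,\mathrm{d}t$.
\item Check that $G\le 0$ on $[a,b)$. On $[c,b)$ this holds because $g\le 0$ there. On $(a,c)$ it holds because $G$ decreases from $G(a)=\int_a^b g\le 0$.
\item Write
\[
\int_a^b f g\,\mathrm{d}x=f(a)G(a)+\int_a^b f'(x)G(x)\,\mathrm{d}x\le 0 .
\]
\item For this one must show the boundary term $f(y)G(y)$ vanishes as $y\to b^-$. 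That follows from $|f(y)G(y)|\le\int_y^b f|g|\,\mathrm{d}t$ for $y\ge c$, which uses monotonicity and the integrability of $fg$.
\end{itemize}

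Your argument is shorter and needs no boundary analysis at $b$. It also shows that several hypotheses are superfluous:
\begin{itemize}
\item differentiability of $f$;
\item continuity of $g$;
\item strict positivity of $g$ on $(a,c)$ (nonnegativity suffices);
\item nonnegativity of $f$ anywhere except at the single point $c$.
\end{itemize}
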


   In the rest of this paper, we set
  \begin{align}\label{eq307-1}
    \psi_{b,p,\alpha}(s)=s^{\frac{ \alpha+2}{p}-1}(1-s)^{b-\frac{\alpha+2}{p}},\,\,\,s\in(0,1),
  \end{align}
and  $$
p_\alpha=
\dfrac{3\alpha}{4}+2+\frac{1}{4}\sqrt{{9\alpha^2+40\alpha+48}}.
$$
It should be note that
  \begin{align*}
    \int_0^1\psi_{b,p,\alpha}(s)\dd s=B\Big(\frac{\alpha+2}{p},~b+1-\frac{\alpha+2}{p}\Big).
  \end{align*}

\begin{lemma}\label{lem308-6}  Let $\alpha \geq 0$, $b\geq 0$, $1< p< \infty$  such that  $\alpha+2<p(b+1)$. Define
\begin{align}\label{eq324.0}
f(s) = s^{2p-3\alpha-8} \int_{0}^{s} \psi_{b,p,\alpha}(t) \dd t - \int_{0}^{1} \psi_{b,p,\alpha}(t) \dd t, \qquad s \in (0,1].
\end{align}
\begin{enumerate}
  \item [{\bf(i)}] If $p \geq p_{\alpha}$ and  $b\leq\frac{\alpha + 2}{p}$, then $f(s) \leq 0$ for all $s$ in $(0,1]$.
  \item [{\bf(ii)}]  If $p < p_{\alpha}$ and $b<\frac{\alpha + 2}{p}$, then there exists some constant $c \in (0,1)$ such that $f(s) > 0$ for $s \in (0,c)$ and $f(s) \leq 0$ for $s \in [c,1]$.
  \item [{\bf(iii)}] If $p \geq p_{\alpha}$, $2p - 3\alpha - 8 \geq 0$ and $b>\frac{\alpha + 2}{p}$, then $f(s) \leq 0$ for all $s$ in $(0,1]$.
\end{enumerate}

\end{lemma}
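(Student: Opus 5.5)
The plan is to study the sign of $f'$ through an auxiliary function whose derivative factors nicely. Throughout, write
\[
a=\frac{\alpha+2}{p},\qquad \beta=b-a,\qquad m=2p-3\alpha-8,\qquad \Phi(s)=\int_0^s\psi_{b,p,\alpha}(t)\,\dd t .
\]
Then $\psi_{b,p,\alpha}(t)=t^{a-1}(1-t)^{\beta}$, and the hypothesis $\alpha+2<p(b+1)$ gives $\beta>-1$. Hence $\Phi(1)=B(a,b+1-a)$ is finite and $f(1)=0$.

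\textbf{The key identity.} Dividing $2p^2-(3\alpha+8)p+(\alpha+2)\ge 0$ by $p$ shows it is equivalent to $m+a\ge 0$. The number $p_\alpha$ is exactly the larger root of this quadratic. I will check that the smaller root lies below $\alpha+2$, which is the relevant range since $p>a p\ge\cdots$ in the setting considered. Granting this, $p\ge p_\alpha$ is equivalent to $m+a\ge 0$.

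\textbf{The auxiliary function $g$.} Differentiating,
\[
f'(s)=s^{m-1}g(s),\qquad g(s)=m\Phi(s)+s\psi_{b,p,\alpha}(s),
\]
and
\[
g'(s)=\psi_{b,p,\alpha}(s)\Big[(m+a)-\frac{\beta s}{1-s}\Big].
\]
Near $0$ we have $g(0^+)=0$ and $g(s)\sim \frac{m+a}{a}s^{a}$. The bracket $h(s)=(m+a)-\beta s/(1-s)$ is monotone, with $h(0)=m+a$ and $h(1^-)=-\operatorname{sgn}(\beta)\cdot\infty$ when $\beta\neq 0$. So $g$ has at most one turning point, and the sign of $f'$ follows from the sign pattern of $h$ together with the limit of $g$ at $s=1$. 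In each case the conclusion then comes from monotonicity of $f$ and $f(1)=0$.

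\textbf{Case (i).} Here $\beta\le 0$ and $m+a\ge 0$, so $h\ge 0$ on $(0,1)$ and $g$ is nondecreasing from $g(0^+)=0$. Thus $g\ge 0$, so $f'\ge 0$, and $f$ is nondecreasing up to $f(1)=0$. Hence $f\le 0$.

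\textbf{Case (ii).} Here $\beta<0$ and $m+a<0$, so $h$ increases from a negative value to $+\infty$. Hence $g$ first decreases from $0$, becoming negative, and then increases. As $s\to1^-$,
\[
g(s)\to mB(a,b+1-a)+\lim_{s\to1}s\psi_{b,p,\alpha}(s)=+\infty,
\]
since $\beta<0$. So $g$ has exactly one zero $s_1\in(0,1)$, with $f$ decreasing on $(0,s_1)$ and increasing on $(s_1,1)$. This gives $f<0$ on $[s_1,1)$.

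Moreover $m<-a<0$, so $f(s)\sim s^{m+a}/a\to+\infty$ as $s\to0^+$. Since $f$ strictly decreases on $(0,s_1)$ from $+\infty$ to a negative value, it has a unique zero $c<s_1$. Then $f>0$ on $(0,c)$ and $f\le 0$ on $[c,1]$.

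\textbf{Case (iii).} Here $\beta>0$, $m\ge 0$, and hence $m+a>0$. Now $h$ decreases from $m+a>0$ to $-\infty$, so $g$ increases from $0$ and then decreases. Because $\beta>0$, $\psi_{b,p,\alpha}(1)=0$, so $g(1^-)=mB(a,b+1-a)\ge 0$. Therefore $g\ge \min\{0,mB(a,b+1-a)\}=0$ on $(0,1)$, so $f$ is nondecreasing to $f(1)=0$ and $f\le 0$.

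\textbf{Main obstacle.} The delicate points are the boundary behaviour and the algebra. These are: the asymptotics of $g$ and $f$ at $0$ and $1$ (integrability of $\psi_{b,p,\alpha}$ when $\beta<0$, and the blow-up of $s\psi_{b,p,\alpha}(s)$ at $s=1$ in case (ii)); and the verification that $p\ge p_\alpha$ is exactly $m+a\ge 0$, including the placement of the smaller root of the quadratic. The monotonicity arguments themselves are routine once the factorization of $g'$ is in hand.
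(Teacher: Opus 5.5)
Your proposal is correct and follows the paper's proof essentially step for step. It uses the same factorization $f'=s^{m-1}g$ and the same sign analysis of $g'$: your bracket $(m+a)-\beta s/(1-s)$ is the paper's linear $h(s)$ divided by $1-s$. The endpoint limits and the monotonicity argument in each of the three cases also match.

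There is one slip, in the equivalence $p\ge p_\alpha\iff m+a\ge 0$. Case (ii) allows $1<p<\alpha+2$, so you need the smaller root $\frac{\alpha+2}{2p_\alpha}$ of $2p^2-(3\alpha+8)p+\alpha+2$ to lie below $1$, not merely below $\alpha+2$. This does hold: $p_\alpha>\frac{3\alpha+7}{2}$, so that root is less than $\frac13$.
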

\begin{proof}
 Combining (\ref{eq307-1}) and differentiating both sides of \eqref{eq324.0} simultaneously yield
\begin{align*}
f'(s)& = (2p - 3\alpha - 8)s^{2p-3\alpha-9} \int_{0}^{s} \psi_{b,p,\alpha}(t)\dd t + s^{2p-3\alpha-8} \psi_{b,p,\alpha}(s) = s^{2p-3\alpha-9}g(s),
\end{align*}
where
\begin{align*}
g(s) &= (2p - 3\alpha - 8) \int_0^s \psi_{b,p,\alpha}(t) \dd t + s \psi_{b,p,\alpha}(s).
\end{align*}
% Note that
%\begin{align*}
%  s\psi'_{b,p,\alpha}(s)&=s\left( s^{\frac{\alpha + 2}{p} - 1} (1 - s)^{b-\frac{\alpha + 2}{p}}\right)'\\[0.1cm]
%  &=\Big(\frac{\alpha + 2}{p} - 1\Big)s^{\frac{\alpha + 2}{p} - 1} (1 - s)^{b-\frac{\alpha + 2}{p}}+\Big(\frac{\alpha + 2}{p}-b\Big)s^{\frac{\alpha + 2}{p} }(1 - s)^{b-\frac{\alpha + 2}{p}-1}.
%\end{align*}
We obtain
\begin{align*}
g'(s) &= (2p - 3\alpha - 8) \psi_{b,p,\alpha}(s) +\psi_{b,p,\alpha}(s)+ s \psi'_{b,p,\alpha}(s) \\[0.1cm]
%&= (2p - 3\alpha - 7) \psi_{b,p,\alpha}(s) + s \psi'_{b,p,\alpha}(s) \\[0.1cm]
&= (2p - 3\alpha - 7) s^{\frac{\alpha + 2}{p} - 1} (1 - s)^{b-\frac{\alpha + 2}{p}}  \\[0.1cm]
&\quad+ \Big(\frac{\alpha + 2}{p} - 1\Big)s^{\frac{\alpha + 2}{p} - 1} (1 - s)^{b-\frac{\alpha + 2}{p}}+\Big(\frac{\alpha + 2}{p}-b\Big)s^{\frac{\alpha + 2}{p} }(1 - s)^{b-\frac{\alpha + 2}{p}-1}\\[0.1cm]
%&= s^{\frac{\alpha + 2}{p} - 1} (1 - s)^{b-\frac{\alpha + 2}{p}-1}\Big[(2p - 3\alpha - 7 )(1-s)+ \Big(\frac{\alpha + 2}{p} - 1\Big)(1-s)+ \Big(\frac{\alpha + 2}{p} -b\Big)s\Big]\\[0.1cm]
%&= s^{\frac{\alpha + 2}{p} - 1} (1 - s)^{b-\frac{\alpha + 2}{p}-1}\Big[\Big(2p - 3\alpha - 8+ \frac{\alpha + 2}{p}\Big)(1-s)+\Big(\frac{\alpha + 2}{p} -b\Big)s\Big]\\[0.1cm]
&= s^{\frac{\alpha + 2}{p} - 1} (1 - s)^{b-\frac{\alpha + 2}{p}-1}\Big[\left(8+3\alpha - 2p-b\right)s+2p - 3\alpha+ \frac{\alpha + 2}{p} - 8\Big]\\[0.1cm]
&:= s^{\frac{\alpha + 2}{p} - 1} (1 - s)^{b-\frac{\alpha + 2}{p} - 1} h(s),
\end{align*}
where
\begin{align*}
h(s) &= \left(8+3\alpha - 2p-b\right)s+2p - 3\alpha+ \frac{\alpha + 2}{p}- 8.
\end{align*}

{\bf Case~(i)~}. Since $b\leq\frac{\alpha + 2}{p}$ and $p \geq p_\alpha$ or equivalently $2p - 3\alpha + \frac{\alpha + 2}{p} - 8 \geq 0$, for $s\in[0,1]$ we must have
\begin{align*}
h(s) &\geq \min\{h(0), h(1)\} = \min\big\{2p - 3\alpha + \frac{\alpha + 2}{p} - 8, \frac{\alpha + 2}{p}-b\big\} \geq 0.
\end{align*}
Thus, we have $g'(s) \geq 0$ and $g(s)$ is increasing on $(0,1)$. Note that $\lim\limits_{s \rightarrow 0^{+}} g(s) =0$, we deduce that $g(s)$ and $f'(s)$ are nonnegative on $(0,1)$. Therefore, $f(s)$ is increasing on $(0,1]$, so that $f(s) \leq f(1) = 0$ for all $s$ in $(0,1]$.

 {\bf Case (ii)}. Assume that $b<\frac{\alpha + 2}{p}$ and $p < p_\alpha$, that is $2p - 3\alpha + \frac{\alpha + 2}{p} - 8 < 0$. A direct calculation gives that
$h(0) < 0 ~~\text{and} ~~ h(1) > 0.$ This implies the existence of some $s_0 \in (0,1)$ satisfying
\begin{align*}
h(s) &\leq 0 \quad \text{for} \quad s \in [0, s_0] \quad \text{and} \quad h(s) \geq 0 \quad \text{for} \quad s \in [s_0, 1].
\end{align*}
As a consequence,
\begin{align*}
g'(s) &\leq 0 \quad \text{for} \quad s \in (0, s_0] \quad \text{and} \quad g'(s) \geq 0 \quad \text{for} \quad s \in [s_0, 1).
\end{align*}
Thus, $g(s)$ is non-increasing on $(0, s_0]$ and non-decreasing on $[s_0, 1)$. Since $b<\frac{\alpha + 2}{p}$, it can be readily verified that $\lim\limits_{s \rightarrow 1^{-}} g(s) =+\infty$. In fact,
 \begin{align*}
  \lim_{s \to 1^-}g(s)&=  \lim_{s \to 1^-}(2p - 3\alpha - 8) \int_0^s \psi_{b,p,\alpha}(t) \dd t +   \lim_{s \to 1^-}s \psi_{b,p,\alpha}(s)\\[0.1cm]
  &=(2p - 3\alpha - 8) \int_0^1 \psi_{b,p,\alpha}(t) \dd t +   \lim_{s \to 1^-}s^{\frac{\alpha+2}{p}}(1-s)^{b-\frac{\alpha+2}{p}}\\[0.1cm]
  &=+\infty.
\end{align*}
Combining with $\lim\limits_{s \rightarrow 0^{+}} g(s) =0$ , it can be deduced  that there exists $s_1 \in (s_0, 1)$ such that
$f'(s)   \leq 0 $ for $s \in (0, s_1]$ and $f'(s) \geq 0 $ for $ s \in [s_1, 1).$ Thus, $f(s)$ is non-increasing on $(0, s_1]$ and non-decreasing on $[s_1, 1)$. Moreover, since
$
3\alpha + 8 - 2p > \frac{\alpha + 2}{p} > 0,
$
using L'Hospital rule we have
\begin{align*}
\lim_{s \to 0^+} f(s) &= \lim_{s \to 0^+} \frac{\int_0^s \psi_{b,p,\alpha}(t) \dd t}{s^{3\alpha + 8 - 2p}} - B\Big(\frac{\alpha + 2}{p},~ b+1 - \frac{\alpha + 2}{p}\Big) \\[0.1cm]
&= \lim_{s \to 0^+} \frac{\psi_{b,p,\alpha}(s)}{(3\alpha + 8 - 2p)s^{3\alpha + 7 - 2p}} - B\Big(\frac{\alpha + 2}{p},~ b+1 - \frac{\alpha + 2}{p}\Big) \\[0.1cm]
&= \lim_{s \to 0^+} \frac{s^{2p - 3\alpha + \frac{\alpha + 2}{p} - 8}(1 - s)^{b-\frac{\alpha + 2}{p}}}{(3\alpha + 8 - 2p)} - B\Big(\frac{\alpha + 2}{p},~ b+1 - \frac{\alpha + 2}{p}\Big) \\[0.1cm]
&= +\infty.
\end{align*}
Observing that $f(1) = 0$, we deduce the existence of a value $c \in (0, s_1)$ such that $f(s)>0$ on $(0,c)$ and $f(s)\leq0$ on $ [c, 1]$.

 {\bf Case (iii)}. Since $b>\frac{\alpha + 2}{p}$ and $2p - 3\alpha - 8 \geq 0$, for $s\in[0,1]$, we have $h(0)>0,$  $h(1)<0.$ Thus, there exists a $s_0 \in (0,1)$ such that
\begin{align*}
h(s) &\geq 0 \quad \text{for} \quad s \in [0, s_0] \quad \text{and} \quad h(s) \leq 0 \quad \text{for} \quad s \in [s_0, 1].
\end{align*}
Therefore,
\begin{align*}
g'(s) &\geq 0 \quad \text{for} \quad s \in (0, s_0] \quad \text{and} \quad g'(s) \leq 0 \quad \text{for} \quad s \in [s_0, 1).
\end{align*}
Thus, $g(s)$ is non-decreasing on $(0, s_0]$ and non-increasing on $[s_0, 1)$. Since $b>\frac{\alpha + 2}{p}$ and $2p - 3\alpha - 8 \geq 0$, it can be readily verified that
$$
\lim\limits_{s \rightarrow 1^{-}} g(s) =(2p - 3\alpha - 8)B\Big(\frac{\alpha + 2}{p},~b+1-\frac{\alpha + 2}{p}\Big)\geq0.
$$
%In fact,
% \begin{align*}
%  \lim_{s \to 1^-}g(s)&=  \lim_{s \to 1^-}(2p - 3\alpha - 8) \int_0^s \psi_{b,p,\alpha}(t) \dd t +   \lim_{s \to 1^-}s \psi_{b,p,\alpha}(s)\\[0.1cm]
%  &=(2p - 3\alpha - 8) \int_0^1 \psi_{b,p,\alpha}(t) \dd t +   \lim_{s \to 1^-}s^{\frac{\alpha+2}{p}}(1-s)^{b-\frac{\alpha+2}{p}}\\[0.1cm]
%  &=(2p - 3\alpha - 8) \int_0^1 \psi_{b,p,\alpha}(t) \dd t.
%\end{align*}
Combining with $\lim\limits_{s \rightarrow 0^{+}} g(s) =0$, we conclude that $g(s)$ and $f'(s)$ are nonnegative on $(0,1)$. Therefore, $f(s)$ is increasing on $(0,1]$, so that $f(s) \leq f(1) = 0$ for all $s$ in $(0,1]$.
\end{proof}

\begin{remark}If $p<p_{\alpha}$ and $b\ge \frac{\alpha+2}{p}$, then $f(s)\ge 0$ for all $s\in (0,1]$. Indeed, under these conditions one shows that $f'(s)\le 0$ on $(0,1]$, so $f(s)\ge f(1)$. Together with $f(1)=0$, the conclusion follows.
 \end{remark}

Let $\alpha=1$. We have the following lemma.

\begin{lemma}\label{lem5.1}  Let   $b\geq 0$, $1< p< \infty$  such that  $p > \frac{3}{b+1}$. Set
\begin{align}\label{eq5.1}
f_{b,p}(s) &= s^{2p-11} \int_0^s \psi_{b,p,1}(t) \dd t - s^{2p-12}(1-s) \int_0^s \frac{t \psi_{b,p,1}(t)}{1-t} \dd t - \int_0^1 \psi_{b,p,1}(t) \dd t.
\end{align}
\begin{enumerate}
\item [{\bf(i)}] If $p \geq \frac{1}{4}(11 + \sqrt{97})$ and $b\leq\frac{3}{p}$, then $f_{b,p}(s) \leq 0$ for all $s$ in $(0,1)$;
    \item [{\bf(ii)}] If $p \geq \frac{1}{4}(11 + \sqrt{97}), 2p-11\geq0$ and $b>\frac{3}{p}$, then $f_{b,p}(s) \leq 0$ for all $s$ in $(0,1)$;
\item [{\bf(iii)}] If $p < \frac{1}{4}(11 + \sqrt{97})$ and $b<\frac{3}{p}$,  then there exists some constant $c \in (0,1)$ such that $f_{b,p}(s) > 0$ for $s \in (0,c)$ and $f_{b,p}(s) \leq 0$ for $s \in [c,1)$.
\end{enumerate}
\end{lemma}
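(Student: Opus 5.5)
The plan is to follow the same scheme as the proof of Lemma \ref{lem308-6}: differentiate $f_{b,p}$, isolate a positive power of $s$, and reduce the sign question to the sign of an elementary factor. Here $\alpha=1$, so $\psi_{b,p,1}(s)=s^{\frac3p-1}(1-s)^{b-\frac3p}$. Write
\[
A(s)=\int_0^s\psi_{b,p,1}(t)\,\dd t,\qquad C(s)=\int_0^s\frac{t\psi_{b,p,1}(t)}{1-t}\,\dd t .
\]
Differentiating \eqref{eq5.1}, the two boundary terms $s^{2p-11}\psi_{b,p,1}(s)$ and $s^{2p-12}(1-s)\cdot\frac{s\psi_{b,p,1}(s)}{1-s}$ cancel. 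This leaves
\[
f_{b,p}'(s)=s^{2p-13}g(s),\qquad g(s)=(2p-11)\,s\,A(s)-\big((2p-12)-(2p-11)s\big)C(s).
\]
So the sign of $f_{b,p}'$ is the sign of $g$. Moreover $g(s)\to0$ as $s\to0^+$, because $A(s)=O(s^{3/p})$ and $C(s)=O(s^{3/p+1})$.

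Next I will differentiate $g$. Using the identity $\psi+\frac{t\psi}{1-t}=\frac{\psi}{1-t}$, I expect one or two differentiations to bring out a common factor. The target form is
\[
g^{(k)}(s)=s^{\frac3p-1}(1-s)^{b-\frac3p-2}\,h(s),
\]
with $h$ a polynomial of low degree (linear or quadratic). I also expect that
\[
h(0)=c_0\Big(2p-11+\tfrac3p\Big),\qquad c_0>0,
\]
since $2p-11+\frac3p=0$ is exactly $p=\frac14(11+\sqrt{97})$. Similarly I expect $h(1)$ to carry the sign of $\frac3p-b$, and this is what produces the dichotomy in $b$.

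With $h$ in hand, the three cases run as in Lemma \ref{lem308-6}.
\begin{itemize}
\item[(i)] If $p\ge\frac14(11+\sqrt{97})$ and $b\le\frac3p$, then $h\ge0$ on $[0,1]$. Hence $g$ (through the intermediate derivatives, which vanish at $0$) is nonnegative, so $f_{b,p}$ is nondecreasing. Then $f_{b,p}(s)\le\lim_{s\to1^-}f_{b,p}(s)=A(1)-B\big(\frac3p,b+1-\frac3p\big)=0$. For this limit I must check that $(1-s)C(s)\to0$, which holds since $C(s)=O\big((1-s)^{b-\frac3p}\big)$ or $O(\log)$ near $1$, and $b+1-\frac3p>0$.
\item[(ii)] If $b>\frac3p$, then $h$ changes sign once, from positive to negative, so $g$ is first increasing and then decreasing. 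The extra hypothesis $2p-11\ge0$ should give $\lim_{s\to1^-}g(s)=(2p-11)A(1)+C(1)\ge0$, and with $g(0^+)=0$ this yields $g\ge0$. The conclusion then follows as in (i).
\item[(iii)] If $p$ is below the threshold and $b<\frac3p$, then $h(0)<0<h(1)$, and $g(1^-)=+\infty$ because $C(s)\to\infty$. So $g$ is first negative and then positive, and $f_{b,p}$ first decreases and then increases, ending at the value $0$. By L'Hospital, as in Case (ii) of Lemma \ref{lem308-6}, $f_{b,p}(0^+)=+\infty$, since $11-2p>\frac3p$. This gives the required $c\in(0,1)$.
\end{itemize}

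The main obstacle is the computation of the higher derivative of $g$. The term $\big((2p-12)-(2p-11)s\big)C(s)$ introduces the extra factor $(1-s)^{-1}$, and I need to check that after enough differentiations all integral terms disappear and $h$ really has a single sign change in each case. If $h$ turns out to be quadratic, I will control it through its values at $0$ and $1$ together with the sign of its leading coefficient. If $h$ does not have a single sign change, I will instead compare $g$ with a multiple of $C$ and apply Lemma \ref{lem308-5}-type monotonicity.
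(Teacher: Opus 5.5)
Your plan works. For (iii) it is essentially the paper's proof. The paper rewrites $\frac{t\psi_{b,p,1}}{1-t}=\frac{\psi_{b,p,1}}{1-t}-\psi_{b,p,1}$ and gets the same $g$: your $g$ equals $(2p-12)A(s)-\big((2p-12)-(2p-11)s\big)\int_0^s\frac{\psi_{b,p,1}(t)}{1-t}\,\mathrm{d}t$. It then differentiates twice.

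For (i) and (ii) the paper takes a shortcut you missed. The middle term $-s^{2p-12}(1-s)C(s)$ is nonpositive, so $f_{b,p}(s)\le s^{2p-11}A(s)-A(1)$. The right-hand side is the function of Lemma~\ref{lem308-6} with $\alpha=1$, where $p_1=\frac14(11+\sqrt{97})$ and $2p-3\alpha-8=2p-11$. Cases (i) and (iii) of that lemma then give (i) and (ii) immediately. Your direct treatment of (i) and (ii) is also valid, only longer.

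When you carry out the computation, three points need adjusting.
\begin{itemize}
\item Exactly two differentiations are needed:
$g'(s)=\frac{s\psi_{b,p,1}(s)}{1-s}+(2p-11)\int_0^s\frac{\psi_{b,p,1}(t)}{1-t}\,\mathrm{d}t$ and $g''(s)=s^{\frac3p-1}(1-s)^{b-\frac3p-2}h(s)$, with $h(s)=(12-2p-b)s+2p+\frac3p-11$. So $h(0)$ is as you predicted, with $c_0=1$. But $h(1)=\frac3p+1-b$, which does not carry the sign of $\frac3p-b$.
\item In (ii), $h$ therefore changes sign only when $b>\frac3p+1$. For $\frac3p<b\le\frac3p+1$ you have $h\ge0$ on $[0,1]$, and the argument of (i) applies. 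When $h$ does change sign, it is $g'$ that is unimodal. Since $g'(0^+)=0$, this still makes $g$ increase and then decrease, so your endpoint check $g(1^-)=(2p-11)A(1)+C(1)\ge0$ suffices.
\item In (iii), $g'$ decreases then increases and $g'(0^+)=0$. Together with $g(1^-)=+\infty$, this forces $g'$ to change sign exactly once, which gives your sign pattern for $g$.
\end{itemize}

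The one step whose stated justification does not suffice is $f_{b,p}(0^+)=+\infty$. It does not go as in Lemma~\ref{lem308-6}, because the subtracted term blows up at the same rate as the leading term:
$s^{2p-12}(1-s)C(s)\sim\frac{p}{p+3}\,s^{2p-11+\frac3p}$, while $s^{2p-11}A(s)\sim\frac p3\,s^{2p-11+\frac3p}$.
The condition $11-2p>\frac3p$ alone is not enough. You also need the coefficient comparison $\frac p3-\frac{p}{p+3}=\frac{p^2}{3(p+3)}>0$. This is exactly what the paper verifies with its auxiliary function $l_{b,p}$.
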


\begin{proof}
  When $p \geq \frac{1}{4}(11 + \sqrt{97})$, the desired conclusion follows immediately from Lemma \ref{lem308-6} for $\alpha = 1$.

When $p < \frac{1}{4}(11 + \sqrt{97})$, namely $2p + \frac{3}{p} < 11$, we rewrite
\begin{align}\label{eq5.2}
f_{b,p}(s)  %&= s^{2p-11} \int_0^s \psi_{b,p,1}(t) \dd t - s^{2p-12}(1-s) \int_0^s \frac{t \psi_{b,p,1}(t)}{1-t} \dd t - \int_0^1 \psi_{b,p,1}(t) \dd t\nonumber \\[0.1cm]
&= s^{2p-11} \int_0^s \psi_{b,p,1}(t) \dd t - s^{2p-12}(1-s) \left( \int_0^s \frac{\psi_{b,p,1}(t)}{1-t} \dd t - \int_0^s \psi_{b,p,1}(t) \dd t \right) - \int_0^1 \psi_{b,p,1}(t) \dd t\nonumber \\[0.1cm]
&= s^{2p-12} \int_0^s \psi_{b,p,1}(t)\dd t - s^{2p-12}(1-s) \int_0^s \frac{\psi_{b,p,1}(t)}{1-t} \dd t - \int_0^1 \psi_{b,p,1}(t) \dd t.
\end{align}
Taking the derivative of both sides simultaneously yields
\begin{align*}
f'_{b,p}(s) &= (2p-12)s^{2p-13} \int_0^s \psi_{b,p,1}(t) \dd t + s^{2p-12} \psi_{b,p,1}(s) - s^{2p-12}(1-s) \frac{\psi_{b,p,1}(s)}{1-s} \\[0.1cm]
&\quad - [(2p-12)s^{2p-13} - (2p-11)s^{2p-12}] \int_0^s \frac{\psi_{b,p,1}(t)}{1-t} \dd t\\[0.1cm]
& = s^{2p-13}g_{b,p}(s),
\end{align*}
where
\begin{align*}
g_{b,p}(s) &= (2p - 12) \int_0^s \psi_{b,p,1}(t) \dd t - \big[(2p - 12) - (2p - 11)s\big] \int_0^s \frac{\psi_{b,p,1}(t)}{1 - t} \dd t.
\end{align*}
Differentiating the above equation and recall that $\psi_{b,p,1}(s)=s^{\frac{3}{p}-1}(1 - s)^{b-\frac{3}{p}}$ we get
\begin{align*}
g'_{b,p}(s) &= (2p - 12)\psi_{b,p,1}(s) - [(2p - 12) - (2p - 11)s] \frac{\psi_{b,p,1}(s)}{1 - s} + (2p - 11) \int_0^s \frac{\psi_{b,p,1}(t)}{1 - t} \dd t \\[0.1cm]
&= \frac{s \psi_{b,p,1}(s)}{1 - s} + (2p - 11) \int_0^s \frac{\psi_{b,p,1}(t)}{1 - t} \dd t \\[0.1cm]
& = s^{\frac{3}{p}}(1 - s)^{b-\frac{3}{p} - 1} + (2p - 11) \int_0^s \frac{\psi_{b,p,1}(t)}{1 - t} \dd t.
\end{align*}
To determine the sign of \( g'_{b,p} \), we compute the second derivative of \( g_{b,p}\), i.e.,
\begin{align*}
g_{b,p}''(s) &= \frac{3}{p} s^{\frac{3}{p} - 1}(1 - s)^{b-\frac{3}{p} - 1} + \Big(\frac{3}{p} + 1-b\Big) s^{\frac{3}{p}}(1 - s)^{b-\frac{3}{p} - 2} + (2p - 11) \frac{\psi_{b,p,1}(s)}{1 - s} \\[0.1cm]
&= \frac{3}{p} s^{\frac{3}{p} - 1}(1 - s)^{b-\frac{3}{p} - 1} + \Big(\frac{3}{p} + 1-b\Big) s^{\frac{3}{p}}(1 - s)^{b-\frac{3}{p} - 2} + (2p - 11) s^{\frac{3}{p} - 1}(1-s)^{b-\frac{3}{p} - 1}\\[0.1cm]
%& = s^{\frac{3}{p} - 1}(1 - s)^{b-\frac{3}{p} - 2} \Big[\frac{3}{p}(1-s)+\Big(\frac{3}{p}+1-b\Big)s+(2p-11)(1-s)\Big]\\[0.1cm]
&= s^{\frac{3}{p} - 1}(1 - s)^{b-\frac{3}{p} - 2} \Big[\left(12-2p-b\right)s+2p+\frac{3}{p}-11\Big]\\[0.1cm]
&:= s^{\frac{3}{p} - 1}(1 - s)^{b-\frac{3}{p} - 2} h_{b,p}(s).
\end{align*}
Since $2p + \frac{3}{p} < 11$ and  $b<1+\frac{3}{p}$, we have $$ h_{b,p}(0)=\frac{3}{p}+2p-11<0 ~~~\mbox{and}~~~ h_{b,p}(1)=\frac{3}{p}+1-b>0.$$
 This means that there is $s_0 \in (0,1)$ such that
 $$h_{b,p}(s) \leq 0~~~ \mbox{on} ~~~[0, s_0] ~~~\mbox{and}~~~h_{b,p}(s) \geq 0 ~~~\mbox{ on}~~~[s_0, 1].$$
  Thus
   $$g''_{b,p}(s) \leq 0 ~~~\mbox{ on}~~~(0, s_0]~~ ~\mbox{and}~~~ g''_{b,p}(s) \geq 0~~~\mbox{ on}~~~[s_0, 1),$$
   which shows that $g'_{b,p}(s)$ is decreasing on $(0, s_0]$ and increasing on $[s_0, 1)$. Clearly, $g'_{b,p}(0) = 0$.

 On the other hand, since $b<\frac{3}{p}$ and
\begin{align}\label{eq5.3}
\lim_{s \to 1^-} \frac{\int_0^s \frac{\psi_{b,p,1}(t)}{1-t} \dd t}{(1-s)^{b-\frac{3}{p}}} & = \lim_{s \to 1^-} \frac{\frac{\psi_{b,p,1}(s)}{1-s}}{\left(\frac{3}{p}-b\right)(1-s)^{b-\frac{3}{p} - 1}} = \frac{p}{3-bp},
\end{align}
we conclude that $g'_{b,p}(s) \to +\infty$ as $s \to 1$. In fact,
 \begin{align*}
  \lim_{s \to 1^-} g'_{b,p}(s)&=\lim_{s \to 1^-} \left(s^{\frac{3}{p}}(1 - s)^{b-\frac{3}{p} - 1} + (2p - 11) \int_0^s \frac{\psi_{b,p,1}(t)}{1 - t} \dd t\right) =+\infty.
\end{align*}
Hence there exists $s_1 \in (s_0, 1)$ such that $g'_{b,p}(s) \leq 0$ on $(0, s_1]$ and $g'_{b,p}(s) \geq 0$ on $[s_1, 1)$. It follows that $g_{b,p}(s)$ is decreasing on $(0, s_1]$ and increasing on $[s_1, 1)$. Observing that $g_{b,p}(0) = 0$ and
% $g_{b,p}(s) \to +\infty$ as $s \to 1$,
 \begin{align*}
  \lim_{s \to 1^-} g_{b,p}(s) &=\lim_{s \to 1^-} \left((2p - 12) \int_0^s \psi_{b,p,1}(t) \dd t - [(2p - 12) - (2p - 11)s] \int_0^s \frac{\psi_{b,p,1}(t)}{1 - t} \dd t\right)\\[0.1cm]
  &=+\infty,
\end{align*}
 we deduce that there exists $s_2 \in (s_1, 1)$ such that $f'_{b,p}(s) \leq 0 ~~\mbox{ on}~~ [0, s_2]$
   and
 $f'_{b,p}(s) \geq 0 ~~~\mbox{ on}~~[s_2, 1).$
   Thus $f_{b,p}(s)$ is decreasing on $(0, s_2]$ and increasing on $[s_2, 1)$. By (\ref{eq5.2}) and (\ref{eq5.3}) we have
\begin{align*}
\lim\limits_{s\rightarrow 1^-} f_{b,p}(s)&=\lim\limits_{s\rightarrow 1^-}s^{2p-12} \int_0^s \psi_{b,p,1}(t)\dd t -\lim\limits_{s\rightarrow 1^-} s^{2p-12}(1-s) \int_0^s \frac{\psi_{b,p,1}(t)}{1-t} \dd t - \int_0^1 \psi_{b,p,1}(t) \dd t\\[0.1cm]
&= -\lim\limits_{s\rightarrow 1^-} s^{2p-12}(1-s)^{b+1-\frac{3}{p}} \frac{\int_0^s \frac{\psi_{b,p,1}(t)}{1-t}\dd t}{(1-s)^{b-\frac{3}{p}}}   =0.
\end{align*}
Let
\begin{align*}
l_{b,p}(s) &= \frac{\int_0^s \psi_{b,p,1}(t) \dd t}{s^{\frac{3}{p}}} - \frac{(1-s) \int_0^s \frac{t \psi_{b,p,1}(t)}{1-t} \dd t}{s^{\frac{3}{p} + 1}} - s^{11 - 2p - \frac{3}{p}} \int_0^1 \psi_{b,p,1}(t) \dd t.
\end{align*}
Noting that $(11 - 2p - \frac{3}{p}) > 0$, we have
\begin{align*}
\lim_{s \to 0^+} l_{b,p}(s) &= \lim_{s \to 0^+} \frac{\psi_{b,p,1}(s)}{\frac{3}{p} s^{\frac{3}{p} - 1}} - \lim_{s \to 0^+} \frac{s \psi_{b,p,1}(s) - \int_0^s \frac{t \psi_{b,p,1}(t)}{1-t} \dd t}{\left(\frac{3}{p} + 1\right) s^{\frac{3}{p}}}\nonumber\\[0.1cm]
& = \frac{p}{3} - \frac{p}{3 + p} = \frac{p^2}{3(3 + p)} > 0.
\end{align*}
 Therefore, by (\ref{eq5.1}) we get that $$f_{b,p}(s) = s^{2p + \frac{3}{p} - 11} l_{b,p}(s) \to +\infty$$ as $s \to 0^+$. Combining these, we conclude that there exists $c \in (0, s_2)$ such that $f_{b,p}(s) > 0$ on $(0, c)$ and $f_{b,p}(s) \leq 0$ on $[c, 1)$. The proof is complete.
\end{proof}

When \( \alpha = 0 \), the following result analogous to that of Lemma \ref{lem5.1} holds true.   Since its proof proceeds along the same lines of reasoning as Lemma \ref{lem5.1}, we shall omit it here.

\begin{lemma}  \label{l28} Let   $b\geq 0$, $1< p< \infty$  such that $p > \frac{2}{b+1}$. Set
\begin{align*}
f_{b,p}(s) &= s^{2p-11} \int_0^s \psi_{b,p,0}(t) \dd t - s^{2p-12}(1-s) \int_0^s \frac{t \psi_{b,p,0}(t)}{1-t} \dd t - \int_0^1 \psi_{b,p,0}(t) \dd t.
\end{align*}
\begin{enumerate}
\item [{\bf(i)}] If $p \geq2 + \sqrt{3}$ and $b\leq\frac{2}{p}$, then $f_{b,p}(s) \leq 0$ for all $s \in (0,1)$;
\item [{\bf(ii)}] If $p \geq2 + \sqrt{3}, 2p-8\geq0$ and $b>\frac{2}{p}$, then $f_{b,p}(s) \leq 0$ for all $s \in (0,1)$;
\item [{\bf(iii)}] If $p < 2 + \sqrt{3}$ and $b<\frac{2}{p}$, then there exists some constant $c \in (0,1)$ such that $f_{b,p}(s) > 0$ for $s \in (0,c)$ and $f_{b,p}(s) \leq 0$ for $s \in [c,1)$.
\end{enumerate}
\end{lemma}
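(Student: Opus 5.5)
The plan is to run the argument of Lemma \ref{lem5.1} with $\psi_{b,p,1}$ replaced by $\psi_{b,p,0}(t)=t^{\frac2p-1}(1-t)^{b-\frac2p}$. The shortcut used there for the ``large $p$'' cases is not available: it appealed to Lemma \ref{lem308-6} with $\alpha=1$, which works only because $2p-11=2p-3\alpha-8$ when $\alpha=1$, and this coincidence fails for $\alpha=0$. So all three cases have to be handled directly.

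First I rewrite $f_{b,p}$ as in \eqref{eq5.2}, using $\frac{t}{1-t}=\frac1{1-t}-1$:
\[
f_{b,p}(s)=s^{2p-12}F(s)-B\Big(\tfrac2p,~b+1-\tfrac2p\Big),\qquad F(s)=\int_0^s\psi_{b,p,0}(t)\,\frac{s-t}{1-t}\,\dd t .
\]
Here $F\ge 0$, $F$ is increasing, and $F(1)=B(\frac2p,b+1-\frac2p)$. Differentiating exactly as in Lemma \ref{lem5.1} gives
\[
f_{b,p}'=s^{2p-13}g_{b,p},\qquad g_{b,p}(s)=(2p-12)\int_0^s\psi_{b,p,0}-\big[(2p-12)-(2p-11)s\big]\int_0^s\frac{\psi_{b,p,0}}{1-t},
\]
with $g_{b,p}(0)=0$ and $g_{b,p}'(0)=0$. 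A second differentiation gives $g_{b,p}''=s^{\frac2p-1}(1-s)^{b-\frac2p-2}h_{b,p}(s)$, where
\[
h_{b,p}(s)=(12-2p-b)s+2p+\tfrac2p-11 .
\]
Note that $h_{b,p}(1)=\frac2p+1-b$.

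\textbf{Case (iii).} Assume $p<2+\sqrt3$ and $b<\frac2p$. Since $2p+\frac2p<11$ on this range, we have $h_{b,p}(0)<0<h_{b,p}(1)$. The chain of Lemma \ref{lem5.1} then goes through verbatim:
\begin{itemize}
\item $g''_{b,p}$ changes sign once, from $-$ to $+$;
\item $g'_{b,p}\to+\infty$ as $s\to1^-$, by the analogue of \eqref{eq5.3}, whose limit is $\frac{p}{2-bp}$; so $g'_{b,p}$ changes sign once;
\item $g_{b,p}\to+\infty$ as $s\to1^-$, so $g_{b,p}$, and hence $f'_{b,p}$, changes sign once.
\end{itemize}
Thus $f_{b,p}$ decreases and then increases. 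At the right endpoint, $f_{b,p}(1^-)=0$, because the factor $(1-s)^{b+1-\frac2p}\to0$ under the hypothesis $p(b+1)>2$. At the left endpoint, $f_{b,p}=s^{2p+\frac2p-11}l_{b,p}(s)$, and L'Hospital's rule gives
\[
l_{b,p}(0^+)=\tfrac p2-\tfrac p{p+2}>0 .
\]
Since the exponent $2p+\frac2p-11$ is negative, $f_{b,p}\to+\infty$ as $s\to0^+$. The required point $c$ therefore exists.

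\textbf{Cases (i) and (ii).} If $2p-12\ge0$, both cases are immediate: $s^{2p-12}F(s)\le F(s)\le F(1)$. Otherwise I would show $g_{b,p}\ge0$, so that $f_{b,p}$ increases to $f_{b,p}(1^-)=0$.
\begin{itemize}
\item In case (i), $b\le\frac2p$ gives $h_{b,p}(1)\ge0$. If in addition $h_{b,p}(0)\ge0$, then $g''_{b,p}\ge0$, so $g'_{b,p}\ge0$ and $g_{b,p}\ge0$.
\item In case (ii), $b>\frac2p$ makes $h_{b,p}(1)<0$. If $h_{b,p}(0)\ge0$, then $g_{b,p}$ first increases and then decreases, so it suffices to check its limit at $1$. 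From the formula, $g_{b,p}(1^-)=-\int_0^1\frac{t\psi_{b,p,0}(t)}{1-t}\,\dd t$, and I would try to sign this using $2p-8\ge0$, as in Lemma \ref{lem308-6}(iii).
\end{itemize}

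\textbf{Main obstacle.} The condition $h_{b,p}(0)=2p+\frac2p-11\ge0$ is genuinely needed, and it is stronger than $p\ge 2+\sqrt3$. The computation in case (iii) shows that $f_{b,p}\to+\infty$ as $s\to0^+$ whenever $2p+\frac2p<11$, and this holds for $2+\sqrt3\le p<\frac{11+\sqrt{105}}4$. On that range (i) and (ii) as written cannot hold: $f_{b,p}\le0$ on $(0,1)$ is false there. So the step I expect to fail is closing (i) and (ii) at the stated threshold $2+\sqrt3$. The argument above proves them only for $p\ge\frac{11+\sqrt{105}}4$, or when $2p\ge12$. The threshold $2+\sqrt3$ is exactly the one that comes from exponents $2p-8$ and $2p-9$, which suggests the statement contains an inconsistency.
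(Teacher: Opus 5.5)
Your objection is correct. The statement as printed is false in cases (i) and (ii); the failure lies in the lemma, not in your argument. For every $b$, $F(s)\sim\frac{p^2}{2(p+2)}s^{\frac2p+1}$ as $s\to0^+$. Hence $f_{b,p}(s)\to+\infty$ whenever $2p+\frac2p<11$.

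Concretely, $p=4$, $b=0$ satisfies the hypotheses of (i). With $\theta=\arcsin\sqrt{0.1}$ (so $\tan\theta=\tfrac13$) one gets $f_{0,4}(0.1)=2000\,\theta-18000\big(\tfrac13-\theta\big)-\pi\approx 432>0$. At $p=4$, every $b>\frac12$ likewise violates (ii).

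The paper gives no separate proof, only the remark that it follows the lines of Lemma~\ref{lem5.1}. The large-$p$ step of that lemma is the pointwise comparison $f_{b,p}(s)\le s^{2p-3\alpha-8}\int_0^s\psi_{b,p,\alpha}(t)\,\mathrm{d}t-B$, obtained by dropping the nonpositive middle term, followed by Lemma~\ref{lem308-6}. For $\alpha=0$ this comparison requires the first exponent to be $2p-8$. The printed exponents $2p-11$, $2p-12$ were evidently carried over from $\alpha=1$; the intended statement has $2p-8$ and $2p-9$, as you suspected.

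With the corrected exponents, the paper's route works in all three cases:
\begin{itemize}
\item (i) and (ii) follow from Lemma~\ref{lem308-6}(i) and (iii) with $\alpha=0$.
\item (iii) follows from your derivative chain, now with $f_{b,p}=s^{2p-9}F-B$, $f_{b,p}'=s^{2p-10}g_{b,p}$ and $h_{b,p}(s)=(9-2p-b)s+2p+\frac2p-8$. The constant term of $h_{b,p}$ vanishes precisely at $p=2+\sqrt3$.
\end{itemize}
Your treatment of (iii) for the printed exponents is the same chain and is sound.

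Two details in your discussion of (ii) are wrong. First, $h_{b,p}(1)=\frac2p+1-b$ is negative only when $b>1+\frac2p$, not whenever $b>\frac2p$. Second, for $b>\frac2p$ the limit is
\[
g_{b,p}(1^-)=(2p-12)B+\int_0^1\frac{\psi_{b,p,0}(t)}{1-t}\,\mathrm{d}t=B\Big(2p-11+\frac{2}{bp-2}\Big),
\]
not $-\int_0^1\frac{t\psi_{b,p,0}(t)}{1-t}\,\mathrm{d}t$.

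In fact the printed (ii) fails on the whole range $4\le p<\frac{11}2$ once $b$ is large. For fixed $s\in(0,1)$, the mass of $\psi_{b,p,0}$ concentrates at $0$, and $\int_0^1 t\psi_{b,p,0}(t)\,\mathrm{d}t=\frac{2}{p(b+1)}B$. Together these give $f_{b,p}(s)/B\to s^{2p-11}-1>0$ as $b\to\infty$. So your suggestion that the argument covers (ii) for $p\ge\frac{11+\sqrt{105}}4$ is too optimistic.

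Conversely, for $p\ge\frac{11}2$ both (i) and (ii) are immediate from the unrewritten form, because the middle term is nonpositive and $s^{2p-11}\le1$. This sharpens your condition $2p\ge12$.

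Your argument for (i) when $p\ge\frac{11+\sqrt{105}}4$ is correct, since there $h_{b,p}\ge0$ on $[0,1]$. Combined with the blow-up at $0$, the printed (i) holds exactly for $p\ge\frac{11+\sqrt{105}}4$.
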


The following inequality may be familiar to some experts, yet no  literature has been identified. For the sake of completeness, we present a detailed proof herein.

\begin{lemma} \label{apinq} Let $1< p< \infty$ and $\alpha>-1$. Then for every \( n \in \mathbb{N}_0 \),
\begr %\label{fneq}
    |f^{(n)}(0) |^p\leq \frac{(n!)^p   \| f   \|_{\apa}^p}{(\alpha+1)B\Big(\frac{np}{2}+1,\alpha+1 \Big)}. \nonumber %\label{eq:main-estimate}
   \endr
 \end{lemma}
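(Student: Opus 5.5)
The plan is to combine the Cauchy formula on each circle $|z|=r$ with the convexity of $|\cdot|^p$ to control the $n$-th Taylor coefficient by the integral means $M_p(r,f)$. Then we integrate against the radial weight so that the full $A^p_\alpha$ norm appears.

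\textbf{Step 1: pointwise coefficient bound on circles.} Write $f(z)=\sum a_k z^k$, so that $a_n=f^{(n)}(0)/n!$. For each $0<r<1$, Cauchy's formula gives
\[
a_n r^n=\frac{1}{2\pi}\int_0^{2\pi} f(re^{i\theta})e^{-in\theta}\,\dd\theta .
\]
Jensen's (or H\"older's) inequality for the probability measure $\dd\theta/2\pi$ and $p\ge 1$ then yields
\[
|a_n|^p r^{np}\le M_p^p(r,f).
\]

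\textbf{Step 2: integrate against the weight.} Multiply the inequality from Step 1 by $2r(\alpha+1)(1-r^2)^\alpha$ and integrate over $r\in(0,1)$. In polar coordinates, with $\dd A=\frac{1}{\pi}r\,\dd r\,\dd\theta$, the right-hand side is exactly
\[
\|f\|_{\apa}^p=(\alpha+1)\int_0^1 2r(1-r^2)^\alpha M_p^p(r,f)\,\dd r .
\]
The left-hand side becomes $|a_n|^p(\alpha+1)\int_0^1 2r^{np+1}(1-r^2)^\alpha\,\dd r$.

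\textbf{Step 3: evaluate the integral.} Substituting $x=r^2$ gives
\[
\int_0^1 2r^{np+1}(1-r^2)^\alpha\,\dd r=\int_0^1 x^{np/2}(1-x)^\alpha\,\dd x=B\Big(\frac{np}{2}+1,\alpha+1\Big).
\]
Therefore
\[
|a_n|^p(\alpha+1)B\Big(\frac{np}{2}+1,\alpha+1\Big)\le \|f\|_{\apa}^p .
\]
Multiplying by $(n!)^p$ gives the claim. If $f\notin\apa$ the inequality is trivial.

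There is no real obstacle here. The only points to check are the normalization of $\dd A_\alpha$ (giving the factor $\alpha+1$ and the measure $2r\,\dd r$) and the justification of Jensen's inequality, which requires $p\ge1$ and so holds since $p>1$.
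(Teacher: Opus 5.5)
Your proof is correct and follows the paper's argument essentially step for step. Cauchy's formula plus H\"older on each circle gives $r^{np}|a_n|^p\le M_p^p(r,f)$, and integrating against the radial weight with the substitution $x=r^2$ produces the factor $(\alpha+1)B(\tfrac{np}{2}+1,\alpha+1)$. The only cosmetic difference is that you multiply by $2r(\alpha+1)(1-r^2)^\alpha$ instead of $r(\alpha+1)(1-r^2)^\alpha$, which absorbs the factor $\tfrac12$ that the paper carries on both sides.
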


\begin{proof} Let $f(z) = \sum_{n=0}^\infty a_n z^n \in A_\alpha^p .$  By Cauchy's integral formula,
\[
a_n = \frac{1}{2\pi i} \int_{|z|=r} \frac{f(z)}{z^{n+1}} \, \dd z.
\]
Substituting \( z = re^{i\theta} \), we have \( \dd z = i r e^{i\theta} \dd \theta \). Hence
\[
a_n = \frac{1}{2\pi r^n} \int_{-\pi}^{\pi} f(re^{i\theta}) e^{-in\theta} \, \dd \theta .
\]
Taking absolute values and applying H\"older's inequality, we obtain
\begin{align}
r^n |a_n| \le \frac{1}{2\pi} \int_{-\pi}^{\pi} |f(re^{i\theta})| \, \dd \theta \le (2\pi)^{-\frac{1}{p}}
\left( \int_{-\pi}^{\pi} |f(re^{i\theta})|^p \, \dd \theta \right)^{1/p}. \label{eq:cauchy-estimate}
\end{align}
Recall that \( a_n = f^{(n)}(0)/n! \),  the   inequality (\ref{eq:cauchy-estimate}) is equivalent to
\begin{align}
r^{np} \frac{|f^{(n)}(0)|^p}{(n!)^p}
\le \frac{1}{2\pi} \int_{-\pi}^{\pi} |f(re^{i\theta})|^p \, \dd\theta . \label{eq:pointwise-estimate11}
\end{align}
Multiply both sides of \eqref{eq:pointwise-estimate11} by $(\alpha+1)r(1-r^2)^\alpha$, then integrate the resulting inequality with respect to $r$ over the interval $[0,1]$. We get
\begin{align}
\frac{|f^{(n)}(0)|^p}{(n!)^p} (\alpha+1) \int_0^1 r^{np+1} (1-r^2)^\alpha \,\dd r
&\le \frac{\alpha+1}{2\pi}
\int_0^1 \int_{-\pi}^{\pi} |f(re^{i\theta})|^p \, \dd \theta \,
r (1-r^2)^\alpha \, \dd r \notag \\
&= \frac{1}{2} \|f\|_{A_\alpha^p}^p . \label{eq:integrated-estimate}
\end{align}
We next simplify the radial integral appearing on the left-hand side via the change of variable $t = r^2$, so that $r = \sqrt{t}$ and $\dd r = \frac{\dd t}{2\sqrt{t}}$. Direct substitution yields
\begin{align}
(\alpha+1) \int_0^1 r^{np+1} (1-r^2)^\alpha \, \dd r
%&= (\alpha+1) \int_0^1 t^{\frac{np+1}{2}} (1-t)^\alpha \frac{\dd t}{2\sqrt{t}} \notag \\
= \frac{\alpha+1}{2} \int_0^1 t^{\frac{np}{2}} (1-t)^\alpha \, \dd t  = \frac{\alpha+1}{2} B\Big( \frac{np}{2}+1, \alpha+1 \Big). \label{eq:beta-integral}
\end{align}
By \eqref{eq:beta-integral} and \eqref{eq:integrated-estimate}, we get the desired result,  completing the proof of this lemma.
\end{proof}

We shall need the norm of the composition operator \(C_\varphi\), defined by
$C_\varphi f = f \circ \varphi,~f\in \mathrm{H}(\mathbb{D}) $,  where \(\varphi\) is an analytic self-map of the
unit disk. For affine symbols \(\varphi(z)=\rho z+c\) with \(|\rho|+|c|\le 1\),
its norm on the weighted Bergman space \(A_\alpha^p\) is given by the following
lemma (see Theorem 6.4 in \cite{Da}).

\begin{lemma}[\cite{Da}]\label{lem66}
  Let $\varphi(z)=\rho z+c(\rho \ne 0)$, where $\rho$ and $c$ are complex numbers satisfying $\left |\rho \right | +\left | c \right | \le 1$. Then the norm of the composition operator $C_{\varphi}$ on the weighted Bergman space $A_{\alpha}^{p}$ with $p>0$ and $\alpha>-1$ is equal to
  \begin{align*}
  \| C_{\varphi} \|_{\apa\rightarrow\apa}=\Big( \frac{2}{1+|\rho|^{2}-|c|^{2} +\sqrt{ (1+\ |\rho|^{2}- |c|^{2})^2-4|\rho|^2}  } \Big )^{\frac{\alpha+2}{p}}.
  \end{align*}
\end{lemma}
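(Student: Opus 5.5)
The plan is to prove matching lower and upper bounds for $\|C_\varphi\|_{\apa\to\apa}$. Both will be compared with the quantity
\[
M(\varphi)=\sup_{z\in\D}\frac{1-|z|^2}{1-|\varphi(z)|^2}.
\]

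\textbf{Reduction and the explicit supremum.} Since rotations act isometrically on $\apa$, conjugating $\varphi$ by rotations reduces to $\rho>0$ and $c\ge 0$. For this normalized $\varphi$ I will compute $M(\varphi)$ directly. For fixed $|z|$ the ratio is largest when $z$ is real and positive, so it reduces to maximizing
\[
x\mapsto \frac{1-x^2}{1-(\rho x+c)^2}
\]
on $[0,1)$. Setting the derivative to zero gives a quadratic in $x$. Substituting its root should produce exactly
\[
\frac{2}{1+\rho^2-c^2+\sqrt{(1+\rho^2-c^2)^2-4\rho^2}}.
\]
I would also check the degenerate case $\rho+c=1$, where the supremum is approached as $x\to1$. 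Finally I will record that $\varphi(z)=\rho z+c$ is precisely the affine map for which the dilation factor $\rho$ and the translation $c$ interact through this single quadratic. The goal is then to show $\|C_\varphi\|=M(\varphi)^{(\alpha+2)/p}$.

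\textbf{Upper bound.} I will factor $\varphi$ through automorphisms and a dilation. The image $\varphi(\D)$ is the disk $D(c,\rho)\subset\D$. There is an automorphism $\sigma_a(w)=(a-w)/(1-\bar a w)$ with $a\in[0,1)$ that maps $D(c,\rho)$ onto a centered disk $D(0,r)$. Then $\sigma_a\circ\varphi=r\,\tau$ for some automorphism $\tau$ (also a real Möbius map), so
\[
C_\varphi=C_\tau\, C_{rz}\, C_{\sigma_a}.
\]
The dilation has norm $\le1$, by monotonicity of integral means. The norm of $C_{\sigma}$ on $\apa$ for an automorphism $\sigma$ is classical:
\[
\|C_\sigma\|_{\apa\to\apa}=\Big(\tfrac{1+|\sigma(0)|}{1-|\sigma(0)|}\Big)^{(\alpha+2)/p}.
\]
This follows by a change of variables and the bound $|\sigma'|^{-(\alpha+2)}\le\cdots$. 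The crude product of these three norms, however, need not equal $M(\varphi)^{(\alpha+2)/p}$. So instead I will do the change of variables $w=\varphi(z)$ directly:
\[
\|f\circ\varphi\|^p_{\apa}=\frac{\alpha+1}{\rho^2}\int_{D(c,\rho)}|f(w)|^p\Big(1-\Big|\tfrac{w-c}{\rho}\Big|^2\Big)^{\alpha}\dd A(w).
\]
I would then dominate the pulled-back measure by $M(\varphi)^{\alpha+2}\,\dd A_\alpha$ in the subharmonic sense. That is, I aim to show
\[
\int |f|^p\,\dd\mu\le M^{\alpha+2}\int|f|^p\,\dd A_\alpha
\]
for every subharmonic $|f|^p$, via a balayage/superharmonic-majorant comparison in the radial variable. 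This exploits the fact that both measures are radial about $c$ and about $0$, respectively, after the automorphism.

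\textbf{Lower bound.} For each $z_0$ I will use the normalized test functions
\[
f_{a}(w)=\Big(\frac{1-|a|^2}{(1-\bar a w)^2}\Big)^{(\alpha+2)/p},\qquad a=\varphi(z_0),
\]
which have unit norm in $\apa$. Evaluating $\|f_a\circ\varphi\|$ then requires a quantitative lower estimate. For $p=2$ this is exact via $C_\varphi^*K_z=K_{\varphi(z)}$. For general $p$ I would use the subharmonic mean-value inequality on small disks around $z_0$, combined with Lemma \ref{l223} to evaluate the resulting kernel integrals as hypergeometric functions. A limiting argument should then give $\|C_\varphi\|\ge M(\varphi)^{(\alpha+2)/p}$.

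\textbf{Main obstacle.} I expect the hardest step to be the sharp upper bound, namely obtaining the exact constant $M(\varphi)^{\alpha+2}$ rather than a product of automorphism norms. If the balayage comparison fails, my fallback is Dai's route: reduce to real $\varphi$ and prove the inequality $\|f\circ\varphi\|\le M^{(\alpha+2)/p}\|f\|$ by a semigroup/subordination argument along the one-parameter family $\varphi_t$ interpolating between the identity and $\varphi$.
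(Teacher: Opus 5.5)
The paper gives no proof of this lemma; it imports it from \cite{Da} (Theorem 6.4). Your proposal therefore has to stand on its own. It does not: the upper bound, which is the whole content, is missing, and the method you propose for it cannot work.

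\textbf{What is fine.} Your computation of $M(\varphi)$ is correct. The critical-point equation is $\rho c\,x^2-(1-\rho^2-c^2)x+\rho c=0$, and its discriminant $(1-(\rho+c)^2)(1-(\rho-c)^2)$ equals $(1+\rho^2-c^2)^2-4\rho^2$. The lower bound is true, but your route is lossy, since mean-value inequalities on small disks lose constants. Use instead the sharp point evaluation $|g(z)|^p(1-|z|^2)^{\alpha+2}\le\norm{g}_{\apa}^p$. It follows from the mean-value property at $0$ applied to $(g\circ\sigma_z)(\sigma_z')^{(\alpha+2)/p}$. Taking $g=f_a\circ\varphi$ with $a=\varphi(z_0)$ and $\norm{f_a}_{\apa}=1$ gives $\norm{C_\varphi}^p\ge\big((1-|z_0|^2)/(1-|\varphi(z_0)|^2)\big)^{\alpha+2}$ in one line.

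\textbf{Why the balayage step fails.} Any comparison that uses only the subharmonicity of $|f|^p$ (balayage, superharmonic majorants) proves $\int u\,\dd\mu\le K\int u\,\dd A_\alpha$ for every nonnegative subharmonic $u$. In particular it covers the Poisson kernel $u=P(\cdot,\zeta)$ with $\zeta=c/|c|$. Since $u\circ\varphi$ is positive harmonic and $\dd A_\alpha$ is a radial probability measure,
\[
\int u\,\dd\mu=u(c)=\frac{1+|c|}{1-|c|},\qquad \int u\,\dd A_\alpha=u(0)=1.
\]
So such an argument can never give a constant below Harnack's $\frac{1+|c|}{1-|c|}$, and this exceeds $M^{\alpha+2}$ in cases that matter.
\begin{itemize}
\item For $\varphi(z)=(1+z)/2$, the map $\varphi_s$ of Theorem \ref{thm7} at $s=0$, and $\alpha=-\frac12$, one has $M^{\alpha+2}=2^{3/2}<3$.
\item When $|\rho|+|c|=1$, you would need $(1-|c|)^{-(\alpha+1)}\ge 1+|c|$. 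This fails for every $-1<\alpha<0$ once $|c|$ is small. That is exactly the regime where the paper uses the lemma. For $\alpha\ge 0$ and $\rho+c=1$ (with $\rho,c\ge0$), the identity $\rho(1-|w|^2)-(\rho^2-|w-c|^2)=c|1-w|^2$ already gives pointwise domination of the weights.
\item Even for $\alpha=0$, taking $\rho=\frac12$, $c=\frac{3}{10}$ gives $M^2\approx 1.31<\frac{13}{7}$.
\end{itemize}
This is the same gap as between Harnack's constant and the sharp point-evaluation constant $(1-|c|^2)^{-(\alpha+2)}$ in the limit $\rho\to 0$. Analyticity of $f$ must enter essentially, not just subharmonicity of $|f|^p$.

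\textbf{What a working argument needs.} Your fallback is only named, not carried out. To show what is required, take $p=2$ and $|\rho|+|c|=1$, and use the semigroup $\varphi_t(z)=e^{-t}z+1-e^{-t}$. Here $\frac{d}{dt}(f\circ\varphi_t)=(1-z)(f\circ\varphi_t)'$. For polynomials $f=\sum a_nz^n$, set $b_n=\norm{z^n}_{A^2_\alpha}a_n$. Then
\[
\tfrac{\alpha+2}{2}\norm{f}^2-\operatorname{Re}\langle(1-z)f',f\rangle=\sum_{n\ge0}\big(n+\tfrac{\alpha+2}{2}\big)|b_n|^2-\operatorname{Re}\sum_{n\ge0}\sqrt{(n+1)(n+\alpha+2)}\,b_{n+1}\overline{b_n},
\]
which is nonnegative by AM--GM. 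Gronwall and density of polynomials then give $\norm{C_{\varphi_t}}\le e^{t(\alpha+2)/2}=M^{(\alpha+2)/2}$. This handles only that special case. General affine symbols, and $p\ne 2$, still need ideas your proposal does not supply; for $p\ne2$, functions with zeros cannot be reduced to $p=2$ via $f^{p/2}$.
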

\vskip 5mm

  \section{$\|\mathcal{H}_b\|_{\apa\rightarrow\apa }$ for $\alpha\geq0$ }

In this section, we give the upper and lower bounds for the norm of the generalized Hilbert operator $\mathcal{H}_b$ on weighted Bergman spaces  $\apa$ when $\alpha\geq0$. Under certain conditions, the exact value of the operator norm is obtained.

For any real number $x$, recall that the ceiling function of $x$ is formally defined as
\[
\lceil x \rceil := \min\left\{n \in \mathbb{Z} \mid n \geq x\right\}.
\]
Throughout the remainder of this paper, we denote the backward shift operator on weighted Bergman spaces via
\[
S^{*}(f)(z) =
\begin{cases}
\dfrac{f(z) - f(0)}{z}, & z \neq 0 \\
f'(0), & z = 0.
\end{cases}
\]

\begin{theorem}\label{thm3}
  Let $b, \alpha\geq0$, $1< p< \infty$ such that $ \alpha+2<p(b+1)   $.
  \begin{enumerate}
		\item[{\bf(i)}] If $p\geq2(\alpha+2)$, then
$$
\|\mathcal{H}_b\|_{\apa\rightarrow\apa}\leq B\Big(\frac{\alpha+2}{p},~b+1-\frac{\alpha+2}{p}\Big).
$$
		\item[{\bf(ii)}] If $\alpha +3\leq p<2(\alpha +2)$, then
$$
\|\mathcal{H}_b\|_{\apa\rightarrow\apa}\leq2^{\frac{\alpha+1}{p}}B\Big(\frac{\alpha+2}{p},~b+1-\frac{\alpha+2}{p}\Big).
$$
		\item[{\bf(iii)}] If $\alpha+2<p<\alpha +3$, then
$$
\|\mathcal{H}_b\|_{\apa\rightarrow\apa}\leq B\Big(\frac{\alpha+2}{p},~b+1-\frac{\alpha+2}{p}\Big)+2^{\frac{2(\alpha+2)}{p}-1}B\Big(1-\frac{\alpha+2}{p},~b+\frac{\alpha+2}{p}\Big).
$$
        \item[{\bf(iv)}] If $\alpha>0, b>0$, $\max\left \{ \frac{\alpha+2}{b+1},2\right \}<p\le \alpha+2   $, then
$$
\|\mathcal{H}_b\|_{\apa\rightarrow\apa}\leq 2^{\frac{2(\alpha+2)}{p}-1}B\Big(\frac{\alpha+2}{p},~b+1-\frac{\alpha+2}{p}\Big)+2^{\frac{3\alpha+8}{p}-2}B\Big(1-\frac{2}{p},~b+\frac{2}{p}\Big).
$$

 \item[{\bf(v)}]  If $ b>0\ and\ \frac{\alpha+2}{b+1}<p\le 2   $, then
{\small \begin{align*}
& \|\mathcal{H}_b\|_{\apa\rightarrow\apa}\\[0.1cm]
 \leq &\sum_{n=0}^{{\lceil b\rceil}-1}\left[\left(\alpha+1 \right)B\Big(\frac{np}{2}+1,\alpha+1 \Big) \right]^{-\frac{1}{p}}\int_{0}^{1}(1-s)^{b}s^{n}\left[F\left(\frac{p(n+1)}{2},\frac{p(n+1)}{2},\alpha+2;(1-s)^{2} \right) \right]^{\frac{1}{p}}\dd s\nonumber \\[0.1cm]
&+B\left(\frac{\alpha+2}{p},b+1-\frac{\alpha+2}{p} \right)\left[\frac{(\alpha+1)2^{2\alpha+7-({\lceil b\rceil}+1)p}}{9[({\lceil b\rceil}+1)p-2-\alpha] }+2^{\alpha}\max\left \{ 1,2^{\alpha+4-({\lceil b\rceil}+1)p} \right \}  \right]^{\frac{1}{p}}\left \| S^{*} \right \|_{\apa\rightarrow\apa}^{\lceil b\rceil}.
\end{align*} }

\end{enumerate}
\end{theorem}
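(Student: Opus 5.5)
The plan is to follow the standard scheme for the classical Hilbert operator (Diamantopoulos, Karapetrović), starting from the integral representation $\mathcal{H}_b f(z)=\int_0^1 f(t)(1-t)^b(1-tz)^{-(b+1)}\,\dd t$. For fixed $z$ we substitute $t=t_s(z)=\frac{s}{(s-1)z+1}$, the path used by Diamantopoulos. A direct computation gives
\[
\mathcal{H}_b f(z)=\int_0^1 \frac{(1-s)^b}{((s-1)z+1)^{1-b}}\cdot\frac{1}{(1-s)^{b+1}}\cdots
\]
which should simplify to
\[
\mathcal{H}_b f(z)=\int_0^1 \frac{(1-s)^{b}}{1-s+sz}\,\cdot\, \Big(\frac{1-s}{1-s+sz}\Big)^{0}\, f(\phi_s(z))\,\dd s,\qquad \phi_s(z)=\frac{s}{(s-1)z+1}.
\]
More precisely, we will write $\mathcal{H}_b f=\int_0^1 w_s(z)\,f(\phi_s(z))\,\dd s$ with an explicit weight $w_s(z)$. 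Minkowski's inequality then gives $\|\mathcal{H}_b f\|_{A^p_\alpha}\le\int_0^1\|w_s\cdot f\circ\phi_s\|_{A^p_\alpha}\,\dd s$.

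For each $s$ we estimate $\|w_s\, f\circ\phi_s\|^p_{A^p_\alpha}$ by the change of variables $\zeta=\phi_s(z)$. This uses the identity $1-|\phi_s(z)|^2$ versus $1-|z|^2$ and the known pointwise inequality $(1-|z|^2)\le$ (explicit factor in $s,|\zeta|$)$\,(1-|\zeta|^2)$. We then obtain
\[
\|w_s f\circ\phi_s\|_{A^p_\alpha}^p\le \int_{\D}|f(\zeta)|^p\, K_s(\zeta)\,\dd A_\alpha(\zeta),
\]
with a kernel of the form $s^{\frac{\alpha+2}{p}\cdot p-\dots}|1-\dots\zeta|^{\gamma}$. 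In case (i), $p\ge2(\alpha+2)$, the exponent $\gamma=2p-2(\alpha+2)-\dots$ is such that $|\cdot|^{\gamma}\le 2^\gamma$-type bounds collapse exactly. This gives $\|w_s f\circ\phi_s\|\le \psi_{b,p,\alpha}(s)\|f\|$, and integrating yields $B(\frac{\alpha+2}{p},b+1-\frac{\alpha+2}{p})$, as in Karapetrović's argument. In case (ii) the kernel exponent becomes negative but stays bounded by $2^{\alpha+1}$ when $p\ge\alpha+3$, giving the factor $2^{\frac{\alpha+1}{p}}$.

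For (iii) and (iv), where the kernel is singular near $\zeta=1$, we split $f=f(0)+zS^*f$, or more generally split the $s$-integral at $s=1/2$. On $[1/2,1]$ we use the same bound. On $[0,1/2]$ we bound $|1-s+sz|^{-1}$ crudely by $2$, which produces the second Beta term $B(1-\frac{\alpha+2}{p},b+\frac{\alpha+2}{p})$ (respectively with $2/p$ in (iv), obtained by passing through $A^p_\alpha\subset$ comparison with $\alpha$ replaced by $0$, i.e.\ using that $(1-|z|^2)^{\alpha}$ with $p>2$ controls the $A^p$-type term). The powers of $2$ track the Jacobian factors.

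Case (v), $p\le2$, is the main obstacle, because Minkowski alone gives divergent integrals there. We first expand $f=\sum_{n<\lceil b\rceil}a_nz^n+z^{\lceil b\rceil}(S^*)^{\lceil b\rceil}f$. Each monomial term $\mathcal{H}_b(z^n)$ is handled directly: $\|\mathcal{H}_b z^n\|$ is bounded by Minkowski with the kernel $(1-tz)^{-(b+1)}$. The $A^p_\alpha$ norm of $(1-tz)^{-p(n+1)/2\cdot 2/p}$ is computed by Lemma \ref{l223}, which yields the hypergeometric factor. The coefficient $|a_n|$ is bounded by Lemma \ref{apinq}, producing the factor $[(\alpha+1)B(\frac{np}{2}+1,\alpha+1)]^{-1/p}$. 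For the remainder $g=(S^*)^{\lceil b\rceil}f$, which satisfies $\|g\|\le\|S^*\|^{\lceil b\rceil}\|f\|$, the extra factor $t^{\lceil b\rceil}$ inside the integral supplies enough decay at $\zeta$ near $0$ (equivalently, small $s$) to make the change-of-variables kernel integrable. We then split $\D$ into $|\zeta|\le1/2$ and its complement. The first region gives the term $\frac{(\alpha+1)2^{2\alpha+7-(\lceil b\rceil+1)p}}{9[(\lceil b\rceil+1)p-2-\alpha]}$ via a radial integral, and the second gives $2^\alpha\max\{1,2^{\alpha+4-(\lceil b\rceil+1)p}\}$. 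Verifying these constants is the technical heart of the argument.
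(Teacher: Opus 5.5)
Your overall architecture matches the paper's. You write $\mathcal{H}_bf=\int_0^1 w_s\,f\circ\phi_s\,\dd s$, apply Minkowski, change variables $\zeta=\phi_s(z)$, and in (v) split off the Taylor polynomial of degree $<\lceil b\rceil$ via $S^{*\lceil b\rceil}$, using Lemmas \ref{l223}, \ref{apinq} and a cut at $|\zeta|=1/2$. However, the estimates you sketch for (i)--(iv) are wrong, and the one inequality that drives (ii), (iv) and (v) is missing.

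First, $w_s(z)=(1-s)^b/(1-(1-s)z)$, not $(1-s)^b/(1-s+sz)$. Since $w_s(\phi_s^{-1}(\zeta))=\zeta(1-s)^b/s$ and $|\phi_s'|^2=s^2(1-s)^{2-4b}|w_s|^4$, the transformed kernel is a power of $|\zeta|$, not of $|1-c\zeta|$:
\[
\|T_sf\|_{\apa}^p=\frac{\alpha+1}{s^{p-2}(1-s)^{2-pb}}\int_{\phi_s(\D)}|\zeta|^{p-4}|f(\zeta)|^p\big(1-|\phi_s^{-1}(\zeta)|^2\big)^\alpha\,\dd A(\zeta).
\]
So the danger is $\zeta$ near $0$, not near $1$; this region is reached only for small $s$, because $|\zeta|\ge s/(2-s)$ on $\phi_s(\D)$. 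In (i) one uses $1-|\phi_s^{-1}(\zeta)|^2\le\frac{s}{1-s}\frac{1-|\zeta|^2}{|\zeta|^2}$ and then simply $|\zeta|^{p-4-2\alpha}\le 1$. Any $2^\gamma$-type bound would spoil the sharp constant.

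In (ii) the kernel $|\zeta|^{p-4-2\alpha}$ has negative exponent and is not bounded by $2^{\alpha+1}$: its supremum over $\phi_s(\D)$ is $((2-s)/s)^{2\alpha+4-p}$. What is needed is the refined estimate on $D_s=D(c_s,\rho_s)$,
\[
1-|\phi_s^{-1}(\zeta)|^2=\frac{s}{1-s}\,\frac{\rho_s^2-|\zeta-c_s|^2}{\rho_s|\zeta|^2},\qquad \frac{\rho_s^2-|\zeta-c_s|^2}{\rho_s}\le 2|\zeta|(1-|\zeta|).
\]
For $\alpha\ge0$ this replaces $|\zeta|^{-2\alpha}$ by $2^\alpha|\zeta|^{-\alpha}$, leaving the kernel $|\zeta|^{p-4-\alpha}(1-|\zeta|)^\alpha$. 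For $p\ge\alpha+3$ one bounds $|\zeta|^{p-4-\alpha}\le|\zeta|^{-1}$, absorbs $|\zeta|^{-1}$ in polar coordinates, and returns to $\|f\|_{\apa}$ via $M_p(r^2,f)\le M_p(r,f)$; this costs the second factor $2$.

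For (iii) and (iv), splitting the $s$-integral at $1/2$ and bounding the weight by $2$ there does not work. The quantity $|1-s+sz|^{-1}$ is unbounded as $s\to1/2$, $z\to-1$, and for the correct weight $\sup_{\D}|w_s|=(1-s)^b/s$, which is unbounded as $s\to0$. Such a split also cannot produce Beta integrals over all of $[0,1]$ that are singular at $s=0$. In the paper, the second Beta term comes from applying $|\zeta|\ge s/(2-s)$ to $|\zeta|^{p-4-2\alpha}$ in (iii), or to $|\zeta|^{p-4-\alpha}$ in (iv) after the refined estimate. This gives $\|T_sf\|\le(2-s)^{\frac{2(\alpha+2)}{p}-1}s^{-\frac{\alpha+2}{p}}(1-s)^{b-\frac{\alpha+2}{p}}\|f\|$ in (iii) and $\|T_sf\|\le 2^{\alpha/p}(2-s)^{\frac{\alpha+4}{p}-1}s^{-\frac2p}(1-s)^{b-\frac{\alpha+2}{p}}\|f\|$ in (iv). One then splits $(2-s)^\beta=(s+2(1-s))^\beta$ using $(x+y)^\beta\le x^\beta+y^\beta$ in (iii) and $(x+y)^\beta\le 2^\beta(x^\beta+y^\beta)$ in (iv). The $2/p$ in (iv) is the leftover power $s^{-2/p}$, which is why $p>2$ is needed; it is not a comparison with the unweighted space.

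In (v), the monomial term must be estimated for each $s$ from $T_s(z^n)=(1-s)^bs^n(1-(1-s)z)^{-(n+1)}$, using Lemma \ref{l223} with $a=\frac{p(n+1)}{2}$ and $\eta=1-s$. Minkowski in $t$ with the kernel $(1-tz)^{-(b+1)}$, as you propose, produces $F(\frac{p(b+1)}{2},\frac{p(b+1)}{2},\alpha+2;t^2)$ instead of the stated expression. The remainder again requires the refined estimate. It gives the kernel $|w|^{(\lceil b\rceil+1)p-\alpha-4}$, which is integrable near $0$ exactly because $(\lceil b\rceil+1)p>\alpha+2$. It also accounts for the factor $2^\alpha$ and the denominator $9[(\lceil b\rceil+1)p-2-\alpha]$. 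With only the bound from (i), the exponent would be $(\lceil b\rceil+1)p-2\alpha-4$, and the integral $\int_{|w|\le1/2}|w|^{(\lceil b\rceil+1)p-2\alpha-4}\,\dd A(w)$ that yields the stated constant diverges, e.g.\ for $\alpha=5$, $b=3$, $p=2$.
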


\begin{proof}   Let $f\in\apa$.   The operator $\mathcal{H}_b$ has an integral representation as follows:
\begin{align*}
\mathcal{H}_b(f)(z) = \int_{0}^{1} T_s(f)(z) \,~~~~ \dd s,\,\,0<s<1,\,\,\,z\in\D,
\end{align*}
where $
   T_s(f)(z)=w_s(z)f(\phi_s(z)) $ with
    \begin{align}\label{eq-690}
  w_s(z)=\frac{(1-s)^b}{(s-1)z+1},~~~~~ \phi_s(z)=\frac{s}{(s-1)z+1}.
  \end{align}
 From the  Minkowski's inequality, it follows that
  \begin{align}\label{eq1}
    \|\mathcal{H}_b(f)\|_{\apa}&=\left((\alpha+1)\int_{\D}|\mathcal{H}_bf(z)|^p(1-|z|^2)^{\alpha}\daz\right)^{\frac{1}{p}}\nonumber\\[0.1cm]
    &=(\alpha+1)^{\frac{1}{p}}\left(\int_{\D}\left|\int_0^1T_s(f)(z)(1-|z|^2)^{\frac{\alpha}{p}}\dd s\right|^p\daz\right)^{\frac{1}{p}}\nonumber\\[0.1cm]
    &\leq(\alpha+1)^{\frac{1}{p}}\int_0^1\left(\int_{\D}\left|T_s(f)(z)\right|^p(1-|z|^2)^{\alpha}\daz\right)^{\frac{1}{p}}\dd s\nonumber\\[0.1cm]
    &=\int_0^1\|T_s(f)\|_{\apa}\dd s.
  \end{align}
It is clear that
\begin{align}\label{eq-693}
  \phi_s(\D)=D\left(\frac{1}{2-s},\frac{1-s}{2-s}\right):=D_{s}
\end{align}
is the open disc centered at $\dfrac{1}{2-s}$ with radius $\dfrac{1-s}{2-s}$, and for every $0<s<1$, $\phi_s(\D)\subset\D$ with $$\overline{\phi_s(\D)}\cap\partial\D=\{1\}.$$
  From (\ref{eq-690}),
 it is easy to check that
      \begin{align}\label{eq316}
  \phi_s^{-1}(z)=\frac{z-s}{(1-s)z}, ~~~~~~~~~    w_s(\phi_s^{-1}(z))=\frac{z}{s}(1-s)^b,
  \end{align}
and
  \begin{align}\label{eqp}
  |\phi_s'(z)|^2=\frac{s^2}{(1-s)^{4b-2}}|w_s(z)|^4.
  \end{align}
Then, substituting the variable $z=\phi_s^{-1}(w)$ and combining with  \eqref{eq316} and \eqref{eqp}, we obtain
  \begin{align}\label{eq6006}
    \|T_s(f)\|^p_{\apa}&=(\alpha+1)\int_{\D}|w_s(z)|^{p}|f(\phi_s(z))|^p\za\daz\nonumber\\[0.1cm]
   % &=\frac{\alpha+1}{s^2(1-s)^{2-4b}}\int_{\D}|w_s(z)|^{p-4}\frac{s^2}{(1-s)^{4b-2}}|w_s(z)|^{4}|f(\phi_s(z))|^p\za\daz\nonumber\\[0.1cm]
    &=\frac{\alpha+1}{s^2(1-s)^{2-4b}}\int_{\D}|w_s(z)|^{p-4}|\phi_s'(z)|^2|f(\phi_s(z))|^p\za\daz\nonumber\\[0.1cm]
    &=\frac{\alpha+1}{s^2(1-s)^{2-4b}}\int_{\phi_s(\D)}\frac{|w|^{p-4}}{s^{p-4}}(1-s)^{pb-4b}|f(w)|^p(1-|\phi_s^{-1}(w)|^2)^\alpha\dd A(w)\nonumber\\[0.1cm]
    &=\frac{\alpha+1}{s^{p-2}(1-s)^{2-pb}}\int_{\phi_s(\D)}|z|^{p-4}|f(z)|^p(1-|\phi_s^{-1}(z)|^2)^\alpha\daz.
  \end{align}

  {\bf Case~(i) $p\geq2(\alpha+2)$.}

   A simple computation shows that
  \begin{align}\label{eq-var}
    1-|\phi_s^{-1}(z)|^2&=\frac{(1-s)^2|z|^2-|z-s|^2}{(1-s)^2|z|^2}  =\frac{2s\operatorname{Re} z-s^2-s(2-s)|z|^2}{(1-s)^2|z|^2}\nonumber\\[0.1cm]
    &=\frac{s}{1-s}\frac{2\operatorname{Re} z-s-(2-s)|z|^2}{(1-s)|z|^2}\nonumber\\[0.1cm]
    & :=\frac{s}{1-s}g_s(z),\,\,\,z\in\phi_s({\D}).
  \end{align}
 It is clear that
  \begin{align}\label{eq66}
     g_s(z)&\leq\frac{2|z|-s-(2-s)|z|^2}{(1-s)|z|^2} \leq\frac{1+|z|^2-s-(2-s)|z|^2}{(1-s)|z|^2}=\frac{1-|z|^2}{|z|^2}.
  \end{align}
 Combining (\ref{eq6006}), (\ref{eq-var}) and (\ref{eq66}) we get
 \begin{align}\label{eq2}
    \|T_s(f)\|^p_{\apa}
    %=\frac{\alpha+1}{s^{p-2-\alpha}(1-s)^{2-pb+\alpha}}\int_{\phi_s(\D)}|z|^{p-4}|f(z)|^p\left(g_s(z)\right)^\alpha\daz\nonumber\\[0.1cm]
    \leq\frac{\alpha+1}{s^{p-2-\alpha}(1-s)^{2-pb+\alpha}}\int_{\phi_s(\D)}|z|^{p-4-2\alpha}|f(z)|^p\za\daz.
  \end{align}
  Hence,
  \begin{align*}
    \|T_s(f)\|^p_{\apa}
    &\leq\frac{\alpha+1}{s^{p-2-\alpha}(1-s)^{2-pb+\alpha}}\int_{\D}|f(z)|^p\za\daz\\[0.1cm]
    &=\frac{1}{s^{p-2-\alpha}(1-s)^{2-pb+\alpha}}\|f\|^p_{\apa},
  \end{align*}
  which implies that
  \begin{align}\label{eq303}
    \|T_s(f)\|_{\apa}&\leq s^{\frac{\alpha+2}{p}-1}(1-s)^{b-\frac{\alpha+2}{p}}\|f\|_{\apa}=\psi_{b,p,\alpha}(s)\|f\|_{\apa}.
  \end{align}
  Therefore, when $p\geq2(\alpha+2)$, by (\ref{eq303}) and (\ref{eq1}) we get
  \begin{align*}
    \|\mathcal{H}_b(f)\|_{\apa}&\leq B\Big(\frac{\alpha+2}{p},~b+1-\frac{\alpha+2}{p}\Big)\|f\|_{\apa}.
  \end{align*}

    {\bf Case~(ii) $\alpha+3\leq p<2(\alpha+2)$.}

     Let
\begin{align}\label{eq126}
    \rho_s=\frac{1-s}{2-s}\,\,\,\text{and}\,\,\,c_s=\frac{1}{2-s},\,\,\,0<s<1.
\end{align}
It follows from (\ref{eq-693}) that
 \begin{align}\label{eq-694}
  D_{s}=D\left(c_s,\rho_s\right)=\phi_s(\D)\subset R_{s^2}=\{z\in \mathbb{C}:s^2<|z|<1\}.
 \end{align}
 and
    \begin{align}\label{eq919}
      1-|\phi^{-1}_s(z)|^2=\frac{s}{1-s}\frac{1}{|z|^2}\frac{\rho_s^2-|z-c_s|^2}{\rho_s},\,\,s\in(0,1).
    \end{align}
For $z\in D_s$,  a simple calculation   shows that
 \begin{align*}
   \frac{\rho_s^2-|z-c_s|^2}{\rho_s}&=\frac{\rho_s^2-|z|^2-c_s^2+2c_s\operatorname{Re}z}{\rho_s} \leq\frac{\rho_s^2-|z|^2-c_s^2+2c_s|z|}{\rho_s}\nonumber\\[0.1cm]
   &=\frac{\rho_s-c_s-|z|^2+(1-\rho_s+c_s)|z|}{\rho_s}=(1-|z|)\frac{(c_s-\rho_s+2\rho_s)|z|+\rho_s-c_s}{\rho_s}\nonumber\\[0.1cm]
   &=(1-|z|)\left[2|z|-\frac{(c_s-\rho_s)(1-|z|)}{\rho_s}\right]   \leq2|z|(1-|z|).
 \end{align*}
Together this with $\alpha\geq0$, we obtain
 \begin{align}\label{eq305-1}
  \left( \frac{\rho_s^2-|z-c_s|^2}{\rho_s}\right)^\alpha\leq2^\alpha|z|^\alpha(1-|z|)^\alpha.
 \end{align}
Combining (\ref{eq6006}), (\ref{eq-694}),  (\ref{eq919}) and (\ref{eq305-1}) we have
\begin{align*}%\label{eq304-4}
    \|T_s(f)\|^p_{\apa}
    &=\frac{\alpha+1}{s^{p-2}(1-s)^{2-pb}}\int_{\phi_s(\D)}|z|^{p-4}|f(z)|^p(1-|\phi_s^{-1}(z)|^2)^\alpha\daz\nonumber\\[0.1cm]
    &=\frac{\alpha+1}{s^{p-2}(1-s)^{2-pb}}\int_{D_s}|z|^{p-4}|f(z)|^p
    \left(\frac{s}{1-s}\frac{1}{|z|^2}\frac{\rho_s^2-|z-c_s|^2}{\rho_s}\right)^\alpha\daz\nonumber\\[0.1cm]
    &=\frac{\alpha+1}{s^{p-2-\alpha}(1-s)^{2-pb+\alpha}}\int_{D_s}|z|^{p-4-2\alpha}|f(z)|^p
    \left(\frac{\rho_s^2-|z-c_s|^2}{\rho_s}\right)^\alpha\daz\nonumber\\[0.1cm]
    &\leq\frac{2^\alpha(\alpha+1)}{s^{p-2-\alpha}(1-s)^{2-pb+\alpha}}\int_{\mathbb{D}}|z|^{p-4-\alpha}|f(z)|^p
    \left(1-|z|\right)^\alpha\daz.
  \end{align*}
  Since $\left | z \right |^{p-4-\alpha}\le \left | z \right |^{-1}$ holds for all $z\in \mathbb{D}$ with $z\neq 0$, substituting this into   (3.14) yields
  \begin{align*}
    \|T_s(f)\|^p_{\apa}
    &\leq\frac{2^\alpha(\alpha+1)}{s^{p-2-\alpha}(1-s)^{2-pb+\alpha}}\int_{\mathbb{D}}|z|^{-1}|f(z)|^p
    \left(1-|z|\right)^\alpha\daz\nonumber\\[0.1cm]
    &=\frac{2^\alpha(\alpha+1)}{s^{p-2-\alpha}(1-s)^{2-pb+\alpha}}\int_{0}^1\frac{2}{2\pi}\int_{0}^{2\pi} |f(re^{i\theta})|^p
    \dd \theta \left(1-r\right)^\alpha \dd r\nonumber\\[0.1cm]
    &=\frac{2^{\alpha+1}(\alpha+1)}{s^{p-2-\alpha}(1-s)^{2-pb+\alpha}}\int_{0}^1M_{p}^{p}(r,f)\left(1-r\right)^\alpha \dd r\nonumber\\[0.1cm]
    &=\frac{2^{\alpha+1}(\alpha+1)}{s^{p-2-\alpha}(1-s)^{2-pb+\alpha}}2\int_{0}^1M_{p}^{p}(r^{2},f)\left(1-r^{2}\right)^\alpha r\dd r\nonumber\\[0.1cm]
     &\leq\frac{2^{\alpha+1}}{s^{p-2-\alpha}(1-s)^{2-pb+\alpha}}
    (\alpha+1)2\int_{0}^{1} M_{p}^{p}(r,f)\left(1-r^2 \right)^\alpha  r\dd r \nonumber\\[0.1cm]
    &=\frac{2^{\alpha+1}}{s^{p-2-\alpha}(1-s)^{2-pb+\alpha}} \left \| f \right \|_{A_{\alpha}^{p}}^{p} .
  \end{align*}
Taking the $p$-th root of both sides of the above inequality immediately gives
\begin{align*} %\label{eq4t4}
    \|T_s(f)\|_{A_{\alpha}^{p}}\leq2^{\frac{\alpha+1}{p}}\psi_{b,p,\alpha}(s)\|f\|_{A_{\alpha}^{p}}.
\end{align*}
Therefore, whenever $\alpha+3\le p<2(\alpha+2)$, we have
\begin{align*}
    \|\mathcal{H}_b(f)\|_{\apa}&\leq2^{\frac{\alpha+1}{p}}B\Big(\frac{\alpha+2}{p},~b+1-\frac{\alpha+2}{p}\Big)\|f\|_{\apa}.
  \end{align*}

   {\bf Case~(iii) $\alpha+2<p<\alpha+3$.}

   Note that $|z|\geq\dfrac{s}{2-s}$ for $z\in\phi_s(\D)$. Thus,
   $$
   |z|^{p-2\alpha-4}\leq\Big(\frac{s}{2-s}\Big)^{p-2\alpha-4}=\Big(\frac{ 2-s}{s}\Big)^{2\alpha+4-p},\,\,\,z\in\phi_s(\D).
   $$
 So, by (\ref{eq2}) we have
   \begin{align*}
    \|T_s(f)\|^p_{\apa}
    &\leq\frac{\alpha+1}{s^{p-2-\alpha}(1-s)^{2-pb+\alpha}}\Big(\frac{s}{2-s}\Big)^{p-2\alpha-4}\int_{\D}|f(z)|^p\za\daz\\[0.1cm]
    &=\frac{(2-s)^{2\alpha+4-p}}{s^{\alpha+2}(1-s)^{2-pb+\alpha}}\|f\|^p_{\apa},
  \end{align*}
   which implies that
  \begin{align}\label{eq5}
    \|T_s(f)\|_{\apa}&\leq\frac{(2-s)^{\frac{2(\alpha+2)}{p}-1}}{s^{\frac{\alpha+2}{p}}(1-s)^{\frac{\alpha+2}{p}-b}}\|f\|_{\apa}.
  \end{align}
  Note that $\alpha+2<p<\alpha+3$ implies $0<\frac{2(\alpha+2)}{p}-1<1$.   It follows from the fact that
  \begin{align}\label{eqxy} (x+y)^\beta\leq x^\beta+y^\beta,\,\,\,x,\,y\geq0,\,\,\,\beta\in(0,1],\end{align}
   we get
  \begin{align}\label{eq7}
    \frac{(2-s)^{\frac{2(\alpha+2)}{p}-1}}{s^{\frac{\alpha+2}{p}}(1-s)^{\frac{\alpha+2}{p}-b}}&=\frac{(s+2(1-s))^{\frac{2(\alpha+2)}{p}-1}}{s^{\frac{\alpha+2}{p}}(1-s)^{\frac{\alpha+2}{p}-b}}\nonumber\\[0.1cm]
  &\leq\frac{s^{\frac{2(\alpha+2)}{p}-1}+2^{\frac{2(\alpha+2)}{p}-1}(1-s)^{\frac{2(\alpha+2)}{p}-1}}{s^{\frac{\alpha+2}{p}}(1-s)^{\frac{\alpha+2}{p}-b}}\nonumber\\[0.1cm]
  &=\psi_{b,p,\alpha}(s)+2^{\frac{2(\alpha+2)}{p}-1}s^{-\frac{\alpha+2}{p}}(1-s)^{b+\frac{\alpha+2}{p}-1}.
  \end{align}
  Therefore, by (\ref{eq1}), (\ref{eq5}) and (\ref{eq7}),
  \begin{align*}
    \|\mathcal{H}_b(f)\|_{\apa}&\leq\Big[B\Big(\frac{\alpha+2}{p},~b+1-\frac{\alpha+2}{p}\Big)+2^{\frac{2(\alpha+2)}{p}-1}
    B\Big(1-\frac{\alpha+2}{p},~b+\frac{\alpha+2}{p}\Big)\Big]\|f\|_{\apa}.
  \end{align*}

  {\bf Case~(iv)  $\alpha>0, b>0,$~ $\max\left \{ \frac{\alpha+2}{b+1},2\right \}<p\le \alpha+2   $.}

 Since $p-4-\alpha<0$ and the inequality $|z|\geq\dfrac{s}{2-s}$ holding for arbitrary $z\in\phi_s(\mathbb{D})$, combining with  (3.14), one obtains that
  \begin{align*}
    \|T_s(f)\|^p_{\apa}
    &\leq\frac{2^\alpha(\alpha+1)}{s^{p-2-\alpha}(1-s)^{2-pb+\alpha}}\int_{\mathbb{D}}\Big(\frac{2-s}{s} \Big)^{\alpha+4-p}|f(z)|^p
    \left(1-|z|\right)^\alpha\daz\nonumber\\[0.1cm]
    &\leq\frac{2^\alpha(\alpha+1)(2-s)^{\alpha+4-p}}{s^{p-2-\alpha}(1-s)^{2-pb+\alpha}}\int_{\mathbb{D}}|f(z)|^p
    (1-\left | z \right |^{2})^{\alpha}\daz\nonumber\\[0.1cm]
    &=\frac{2^{\alpha}(2-s)^{\alpha+4-p}}{s^{2}(1-s)^{2-pb+\alpha}}\left \| f \right \|_{\apa}^{p},
  \end{align*}
 which yields
 \begin{align}\label{eq-3.19}
    \left\| T_{s}(f) \right\|_{\apa} \le \frac{2^{\frac{\alpha}{p}}(2-s)^{\frac{\alpha+4}{p}-1}}{s^{\frac{2}{p}}(1-s)^{\frac{\alpha+2}{p}-b}}\left \| f \right \|_{\apa}.
\end{align}
It follows from the fact that
$  (x+y)^{\gamma}\leq2^{\gamma} \left(x^{\gamma}+y^{\gamma}\right),\,\,\,x,\,y\geq0,\,\,\,1<{\gamma}<\infty,$ we get
\begin{align}\label{eq-3.20}
\frac{2^{\frac{\alpha}{p}}(2-s)^{\frac{\alpha+4}{p}-1}}{s^{\frac{2}{p}}(1-s)^{\frac{\alpha+2}{p}-b}}
&=\frac{2^{\frac{\alpha}{p}}(s+2(1-s))^{\frac{\alpha+4}{p}-1}}{s^{\frac{2}{p}}(1-s)^{\frac{\alpha+2}{p}-b}}\nonumber\\[0.1cm]
&\leq\frac{2^{\frac{\alpha}{p}}2^{\frac{\alpha+4}{p}-1}(s^{\frac{\alpha+4}{p}-1}+2^{\frac{\alpha+4}{p}-1}(1-s)^{\frac{\alpha+4}{p}-1})}{s^{\frac{2}{p}}(1-s)^{\frac{\alpha+2}{p}-b}}\nonumber\\[0.1cm]
&=2^{\frac{2\alpha+4}{p}-1}\psi_{b,p,\alpha}(s)+2^{\frac{3\alpha+8}{p}-2}s^{-\frac{2}{p}}(1-s)^{\frac{2}{p}+b-1}.
\end{align}
Therefore, combining \eqref{eq1}, \eqref{eq-3.19} and \eqref{eq-3.20}, we arrive at
\begin{align*}
    \|\mathcal{H}_b(f)\|_{\apa}&\leq2^{\frac{2\alpha+4}{p}-1} B\Big(\frac{\alpha+2}{p},~b+1-\frac{\alpha+2}{p}\Big)+2^{\frac{3\alpha+8}{p}-2}B\Big(1-\frac{2}{p},~b+\frac{2}{p}\Big)\|f\|_{\apa}.
  \end{align*}

  {\bf Case~(v) $b>0, \frac{\alpha+2}{b+1}<p\leq 2 $.}

   For a fixed real number $b$, denote $m=\lceil b \rceil$ to simplify notation. For any $z\in \mathbb{D}$, consider
\begin{equation*}
f(z)=\sum_{n=0}^{m-1}\frac{f^{(n)}(0)}{n!}z^{n}+z^{m}S^{*m}(f)(z),
\end{equation*}
where $S^{*m}$ stands for the $m$-fold iterate of the operator $S^{*}$.   Therefore,
\begin{align} \label{eqts}
\left \| T_s(f) \right \|_{\apa}\leq \sum_{n=0}^{m-1}\frac{1}{n!}\left \| T_s(z^{n}f^{(n)}(0)) \right \|_{\apa}+\left \| T_s(z^{m} S^{*m}(f)) \right \|_{\apa}.
\end{align}
We now proceed to treat the two items individually. By Lemmas \ref{l223} and \ref{apinq} implies that
\begin{align*}
 &  \| T_s(z^{n}f^{(n)}(0))   \|_{\apa}^p
  = (\alpha+1) |f^{(n)}(0)   |^p\int_{\mathbb{D}}\left |w_s(z) \right|^p\left |\phi_s(z) \right|^{np}(1-\left |z \right|^2)^{\alpha}\dd A(z)\nonumber \\[0.1cm]
= &(\alpha+1) |f^{(n)}(0)    |^p(1-s)^{bp}s^{np}\int_{\mathbb{D}}\frac{(1-\left |z \right|^2)^{\alpha}}{\left | 1-(1-s)z \right |^{p(n+1)}}\dd A(z)\nonumber \\[0.1cm]
= &  |f^{(n)}(0)  |^p(1-s)^{bp}s^{np}F\left( \frac{p(n+1)}{2},\frac{p(n+1)}{2},\alpha+2;(1-s)^2 \right)\nonumber \\[0.1cm]
\leq  & \frac{(n!)^p \left \| f \right \|_{\apa}^p}{(\alpha+1)B\left(\frac{np}{2}+1,~\alpha+1 \right)}(1-s)^{bp}s^{np}F\left( \frac{p(n+1)}{2},\frac{p(n+1)}{2},\alpha+2;(1-s)^2 \right).
\end{align*}
Consequently,
\begin{align}
& \sum_{n=0}^{m-1}\frac{1}{n!}   \| T_s(z^{n}f^{(n)}(0))   \|_{\apa}\nonumber \\[0.1cm]
 \leq &\sum_{n=0}^{m-1}\frac{\left[F\left( \frac{p(n+1)}{2},\frac{p(n+1)}{2},\alpha+2;(1-s)^2 \right) \right]^{\frac{1}{p}}}{\left[(\alpha+1)B\left(\frac{np}{2}+1,~\alpha+1 \right)\right]^{\frac{1}{p}}}(1-s)^{b}s^{n}  \| f   \|_{\apa} . \label{eqtso}
  \end{align}

For $\left \| T_s(z^{m} S^{*m}(f)) \right \|_{\apa}$,  making the change of variable $z=\phi_s^{-1}(w)$ and using (\ref{eq305-1}), we get
\begin{align}
 & \| T_s(z^{m} S^{*m}(f))   \|_{\apa}^p\nonumber\\[0.1cm]
= &(\alpha+1)\int_{\mathbb{D}}\left | w_s(z) \right |^p \left |\phi_s(z) \right |^{mp}\left |S^{*m}(f)(\phi_s(z)) \right |^p( 1-\left |z \right |^2)^\alpha \dd A(z)\nonumber \\[0.1cm]
= &\frac{\alpha+1}{s^{p-2}(1-s)^{2-bp}}\int_{\phi_s(\mathbb{D})}\left | w\right |^{(m+1)p-4}\left |S^{*m}(f)(w) \right |^p\left(1-\left| \phi_{s}^{-1}(w) \right|^{2} \right)^{\alpha}\dd A(w) \label{eqtsm} \\[0.1cm]
\leq & \frac{\alpha+1}{s^{p-2}(1-s)^{2-bp}}\int_{\phi_s(\mathbb{D})}\left | w\right |^{(m+1)p-4}\left |S^{*m}(f)(w) \right |^p\Big( \frac{s}{1-s}\frac{2\left|w \right|}{\left|w \right|^2}(1-\left|w \right|)\Big)^{\alpha}\dd A(w)\nonumber \\[0.1cm]
\leq &\frac{2^{\alpha}(\alpha+1)}{s^{p-\alpha-2}(1-s)^{\alpha+2-bp}}\int_{\mathbb{D}}\left | w\right |^{(m+1)p-\alpha-4}\left |S^{*m}(f)(w) \right |^p(1-\left|w \right|^2)^{\alpha}\dd A(w)\nonumber \\[0.1cm]
= &\frac{2^{\alpha}(\alpha+1)}{s^{p-\alpha-2}(1-s)^{\alpha+2-bp}}\int_{\left|w \right|\leq \frac{1}{2}}\left | w\right |^{(m+1)p-\alpha-4}\left |S^{*m}(f)(w) \right |^p(1-\left|w \right|^2)^{\alpha}\dd A(w)\nonumber \\[0.1cm]
&+\frac{2^{\alpha}(\alpha+1)}{s^{p-\alpha-2}(1-s)^{\alpha+2-bp}}\int_{\frac{1}{2}< \left|w \right|<1}\left | w\right |^{(m+1)p-\alpha-4}\left |S^{*m}(f)(w) \right |^p(1-\left|w \right|^2)^{\alpha}\dd A(w)\nonumber \\[0.1cm]
:=  &\frac{2^{\alpha}(\alpha+1)}{s^{p-\alpha-2}(1-s)^{\alpha+2-bp}}(I_1+I_2).\label{eqi1i2}
\end{align}
 It is well known that  the operator $S^{*}$ is bounded on $\apa$. Since $p>\frac{2+\alpha}{1+b}\geq \frac{2+\alpha}{1+m}$, we have
\begin{align}
I_1&=\int_{\left|w \right|\leq \frac{1}{2}}\left | w\right |^{(m+1)p-\alpha-4}  |S^{*m}(f)(w)   |^p(1-\left|w \right|^2)^{\alpha}\dd A(w)\nonumber \\[0.1cm]
&\leq \int_{\left|w \right|\leq \frac{1}{2}}\left | w\right |^{(m+1)p-\alpha-4}\frac{1}{ (1-|w|^2)^2}\dd A(w)  \| S^{*m}(f)   \|_{\apa}^p\nonumber \\[0.1cm]
&\le \frac{16}{9}\int_{\left|w \right|\leq \frac{1}{2}}\left | w\right |^{(m+1)p-\alpha-4}\dd A(w)  \| S^{*m}(f)   \|_{\apa}^p\nonumber \\[0.1cm]
&\le\frac{2^{\alpha+7-(m+1)p}}{9[(m+1)p-\alpha-2]}  \| S^{*}   \|_{\apa\rightarrow\apa}^{mp}  \| f   \|_{\apa}^p.\label{eqi1}
\end{align}
Noting that $(m+1)p-\alpha-4$ is not necessarily of a fixed sign (it may be positive or negative), we thus have
\begin{align}
I_2&=\int_{\frac{1}{2}< \left|w \right|<1}\left | w\right |^{(m+1)p-\alpha-4}\left |S^{*m}(f)(w) \right |^p(1-\left|w \right|^2)^{\alpha}\dd A(w)\nonumber \\[0.1cm]
&\le \max\left \{ 1,2^{\alpha+4-(m+1)p} \right \}\int_{\mathbb{D}}\left |S^{*m}(f)(w) \right |^p(1-\left|w \right|^2)^{\alpha}\dd A(w)\nonumber \\[0.1cm]
%&=\frac{\max\left \{ 1,2^{\alpha+4-(m+1)p} \right \}}{\alpha+1}\left \| S^{*m}(f) \right \|_{\apa}^p\nonumber \\[0.1cm]
&\leq \frac{\max\left \{ 1,2^{\alpha+4-(m+1)p} \right \}}{\alpha+1}\left \| S^{*} \right \|_{\apa\rightarrow\apa}^{mp}\left \| f \right \|_{\apa}^p. \label{eqi2}
\end{align}
Substituting   (\ref{eqi1}) and  (\ref{eqi2}) into   (\ref{eqi1i2}) gives
\begin{align*}
 &  \| T_s(z^{m} S^{*m}(f))  \|_{\apa}
  \leq  2^{\frac{{\alpha}}{p}} (\alpha+1)^{\frac{1}{p}} s^{\frac{\alpha+2}{p}-1}(1-s)^{b-\frac{\alpha+2}{p}}  (I_1+I_2)^{\frac{1}{p}}\nonumber \\[0.1cm]
\leq &s^{\frac{\alpha+2}{p}-1}(1-s)^{b-\frac{\alpha+2}{p}} \left \{ \frac{(\alpha+1)2^{2\alpha+7-(m+1)p}}{9[(m+1)p-\alpha-2]}+2^{\alpha}\max\left \{ 1,2^{\alpha+4-(m+1)p} \right \}  \right \}^{\frac{1}{p}} \| S^{*}   \|_{\apa\rightarrow\apa}^{m}  \| f   \|_{\apa}.
\end{align*}
From the above discussion, (\ref{eqts}) reduces to
\begin{align*}
&  \| T_s(f)  \|_{\apa} % \leq  \sum_{n=0}^{m-1}\frac{1}{n!}  \| T_s(z^{n}f^{(n)}(0))  \|_{\apa}+  \| T_s(z^{m} S^{*m}(f))   \|_{\apa}\nonumber \\[0.1cm]
\leq   \sum_{n=0}^{m-1}\frac{\left[F\left( \frac{p(n+1)}{2},\frac{p(n+1)}{2},\alpha+2;(1-s)^2 \right) \right]^{\frac{1}{p}}}{\left[(\alpha+1)B\left(\frac{np}{2}+1,~\alpha+1 \right)\right]^{\frac{1}{p}}}(1-s)^{b}s^{n} \left \| f \right \|_{\apa}\nonumber \\[0.1cm]
&+s^{\frac{\alpha+2}{p}-1}(1-s)^{b-\frac{\alpha+2}{p}}\left \{ \frac{(\alpha+1)2^{2\alpha+7-(m+1)p}}{9[(m+1)p-\alpha-2]}+2^{\alpha}\max\left \{ 1,2^{\alpha+4-(m+1)p} \right \}  \right \}^{\frac{1}{p}}  \| S^{*}  \|_{\apa\rightarrow\apa}^{m}  \| f  \|_{\apa}.
\end{align*}
By (\ref{eq1}), we get the desired result.    The proof is complete.

\end{proof}

  \begin{remark}
When \(b = 0\) and \(\alpha \geq 0\), the condition \(\frac{\alpha+2}{b+1} < p \leq 2\) cannot hold. The parameter \(b\) induces several essential distinctions between the generalized Hilbert operator $\mathcal{H}_b$ and the classical Hilbert operator $\mathcal{H}$.
\end{remark}

\begin{theorem}\label{thm2}
  Let $\alpha>-1$, $b \geq0$, $1< p< \infty$ such that
 $ \alpha+2<p(b+1)$. Then $$\|\mathcal{H}_b\|_{\apa\rightarrow\apa}\geq B\Big(\frac{\alpha+2}{p},~b+1-\frac{\alpha+2}{p}\Big).$$
\end{theorem}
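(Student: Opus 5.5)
We will use test functions concentrating at the boundary point $1$, in the spirit of Dostanić--Jevtić--Vukotić. For $\varepsilon>0$ small, set
\[
f_\varepsilon(z)=(1-z)^{-\frac{\alpha+2}{p}+\varepsilon},
\]
so that $f_\varepsilon\in\apa$ and $\|f_\varepsilon\|_{\apa}^p\asymp 1/\varepsilon$ as $\varepsilon\to0^+$. Since $\alpha+2<p(b+1)$, the integral representation $\mathcal{H}_b=\mathcal{T}_b$ established in Section 2 applies. The aim is to show that
\[
\mathcal{H}_b f_\varepsilon=\bigl(B(\tfrac{\alpha+2}{p}-\varepsilon,\,b+1-\tfrac{\alpha+2}{p}+\varepsilon)+o(1)\bigr)f_\varepsilon+g_\varepsilon
\]
in the sense of norms, where $\|g_\varepsilon\|_{\apa}=o(\|f_\varepsilon\|_{\apa})$. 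Then $\|\mathcal{H}_b\|\ge \|\mathcal{H}_bf_\varepsilon\|/\|f_\varepsilon\|$, and letting $\varepsilon\to0$ gives the claim.

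The key identity comes from writing $\mathcal{H}_bf(z)=\int_0^1 w_s(z)f(\phi_s(z))\,\dd s$ as in the proof of Theorem \ref{thm3}. Since $1-\phi_s(z)=\frac{(1-s)(1-z)}{1-(1-s)z}$, we get, with $\gamma=\frac{\alpha+2}{p}-\varepsilon$,
\[
T_s f_\varepsilon(z)=(1-s)^{b-\gamma}\,(1-z)^{-\gamma}\,\bigl(1-(1-s)z\bigr)^{\gamma-1}.
\]
Hence
\[
\mathcal{H}_bf_\varepsilon(z)=f_\varepsilon(z)\int_0^1(1-s)^{b-\gamma}\bigl(1-(1-s)z\bigr)^{\gamma-1}\dd s .
\]
As $z\to1$, the integral tends to $\int_0^1(1-s)^{b-\gamma}s^{\gamma-1}\dd s=B(\gamma,b+1-\gamma)$. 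This limit is valid because $\gamma<1$ for small $\varepsilon$ in the relevant regime; if $\gamma\ge1$ the factor $(1-(1-s)z)^{\gamma-1}$ is bounded and converges, so dominated convergence still applies. Write $\Phi_\varepsilon(z)$ for the integral.

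Next, fix $\delta>0$ and choose a neighborhood $U_\delta=\{|1-z|<\eta\}\cap\D$ on which $|\Phi_\varepsilon(z)|\ge B(\gamma,b+1-\gamma)-\delta$ uniformly in small $\varepsilon$. Uniformity holds because $\Phi_\varepsilon$ depends continuously on $\gamma$ and the convergence is controlled by $|1-z|$: for $s$ away from $0$ the integrand converges uniformly, and near $s=0$ the integrable singularity $s^{\gamma-1}$ dominates it, since $|1-(1-s)z|\ge c\,s$ on a Stolz-type region. If that fails near the boundary, one can instead use $|1-(1-s)z|\ge$ something comparable to $s+|1-z|$ for $z\in\D$. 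Then
\[
\|\mathcal{H}_bf_\varepsilon\|^p_{\apa}\ge (B-\delta)^p\int_{U_\delta}|f_\varepsilon|^p\dd A_\alpha,
\]
while $\int_{\D\setminus U_\delta}|f_\varepsilon|^p\dd A_\alpha$ stays bounded as $\varepsilon\to0$ and $\|f_\varepsilon\|^p\to\infty$. Dividing, letting $\varepsilon\to0$ (so that $B(\gamma,b+1-\gamma)\to B(\frac{\alpha+2}{p},b+1-\frac{\alpha+2}{p})$), and then letting $\delta\to0$ gives the bound.

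The main obstacle is the uniform lower bound for $\Phi_\varepsilon(z)$ near $z=1$ over the whole disc neighborhood, not just along the radius. The reason is that $\Phi_\varepsilon$ is complex-valued there and $(1-(1-s)z)^{\gamma-1}$ varies in argument. I would handle this by showing that $\Phi_\varepsilon(z)-B(\gamma,b+1-\gamma)\to0$ as $z\to1$ in $\D$, uniformly in $\gamma$ in a compact subinterval of $(0,b+1)$. To do so, split the $s$-integral at $s=\sqrt{|1-z|}$: on $s>\sqrt{|1-z|}$ the ratio $(1-(1-s)z)/s\to1$ uniformly, and the contribution from $s<\sqrt{|1-z|}$ is $O(|1-z|^{\gamma/2})$ or smaller.
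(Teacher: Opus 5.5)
Your argument is correct, but it takes a different route from the paper.

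\textbf{The paper's route.} The paper takes $f_\gamma(z)=(1-z)^{-\gamma/p}$ with $1<\gamma<\min\{\alpha+2,p\}$ and writes $\mathcal{H}_b f_\gamma=B(1,b+1-\frac{\gamma}{p})F(1,b+1,2+b-\frac{\gamma}{p};z)$. It then applies Stirling's formula to the Taylor coefficients to get $\mathcal{H}_b f_\gamma=B(b+1-\frac{\gamma}{p},\frac{\gamma}{p})(f_\gamma+g_\gamma)$, with $\|g_\gamma\|_\infty$ bounded uniformly in $\gamma$. Since $\|f_\gamma\|_{\apa}\to\infty$, the remainder is negligible.

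\textbf{Your route.} You obtain the exact factorization $\mathcal{H}_b f_\varepsilon=\Phi_\varepsilon f_\varepsilon$ from the substitution $t=\phi_s(z)$, and then localize at $z=1$. This identity is equivalent to Euler's transformation $F(b+1,1;b+2-\gamma;z)=(1-z)^{-\gamma}F(1-\gamma,b+1-\gamma;b+2-\gamma;z)$. The paper only asserts the $T_s$-representation. For $f_\varepsilon$ it is a contour deformation of $\int_0^1(1-t)^{b-\gamma}(1-tz)^{-b-1}\,\dd t$, which is legitimate because $b-\gamma>-1$.

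\textbf{What each approach buys.} Your route avoids Stirling asymptotics, and it has a genuine advantage in range. The paper's uniform bound on $g_\gamma$ needs $\gamma/p<1$. As written, letting $\gamma\to\alpha+2$ is therefore only justified when $p>\alpha+2$. The hypothesis $\alpha+2<p(b+1)$ also allows $p\le\alpha+2$ when $b>0$, and your factor $\Phi_\varepsilon$ handles $\frac{\alpha+2}{p}\ge 1$ with no change. The paper's route, in return, gives a global additive decomposition with a bounded error, rather than a comparison near the singular point only.

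\textbf{The step you flag as the main obstacle.} This is easier than you expect. For every $z\in\overline{\D}$ and $0<s<1$ we have $|1-(1-s)z|\ge 1-(1-s)|z|\ge s$, hence $|(1-(1-s)z)^{\gamma-1}|\le\max\{s^{\gamma-1},2^{\gamma-1}\}$. For $\gamma$ in a compact interval $[\gamma_0,\gamma_1]\subset(0,b+1)$, the integrand of $\Phi$ is then dominated by $(1-s)^{b-\gamma_1}\max\{s^{\gamma_0-1},2^{\gamma_1-1}\}$. This majorant is integrable and independent of both $z$ and $\gamma$. By dominated convergence, $(\gamma,z)\mapsto\Phi(\gamma,z)$ is continuous, hence uniformly continuous, on the compact set $[\gamma_0,\gamma_1]\times\overline{\D}$. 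That gives your neighborhood $U_\delta$ directly and uniformly in small $\varepsilon$. No Stolz region or splitting at $s=\sqrt{|1-z|}$ is needed, although that splitting also works.
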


\begin{proof}   Let $1<\gamma <\min\left \{ \alpha+2,p \right \} <p(b+1)$ and
  $$f_\gamma(z)=(1-z)^{-\frac{\gamma}{p}}, ~~~z\in\D.   $$
 It is easy to see that
  \begin{align*}
    \|f_\gamma\|^p_{\apa}=F\left(\frac{\gamma}{2},\frac{\gamma}{2},\alpha+2;1\right)
\end{align*}
and $\|f_\gamma\|_{\apa}<\infty$ if $\gamma<\alpha+2$.   Moreover, we  have $\lim\limits_{\gamma\rightarrow\alpha+2}\|f_\gamma\|_{\apa}=\infty$ (see \cite{Bo}).

By (\ref{eq8}), we get
\begin{align*}
  \mathcal{H}_b(f_\gamma)(z)&=\int_0^1\frac{(1-t)^{b-\frac{\gamma}{p}}}{(1-tz)^{b+1}}\dd t=B\left(1,~b+1-\frac{\gamma}{p}\right)F\left(1,b+1,2+b-\frac{\gamma}{p};z\right),
\end{align*}
i.e.,
\begin{align*}
\mathcal{H}_b(f_\gamma)(z)&=\frac{\Gamma\left(b+1-\frac{\gamma}{p}\right)\Gamma\left(\frac{\gamma}{p}\right)}{\Gamma(b+1)}\sum^\infty_{k=0}\frac{\Gamma(k+1)
\Gamma\left(k+b+1\right)}{\Gamma\left(k+b+2-\frac{\gamma}{p}\right)
\Gamma\left(k+\frac{\gamma}{p}\right)}\frac{\Gamma\left(k+\frac{\gamma}{p}\right)}{\Gamma\left(\frac{\gamma}{p}\right)}
\frac{z^k}{k!}.
\end{align*}
By Stirling's formula we get
\begin{align*}
  \frac{\Gamma(k+1)\Gamma\left(k+b+1\right)}{\Gamma\left(k+b+2-\frac{\gamma}{p}\right)  \Gamma\left(k+\frac{\gamma}{p}\right)}=1+O\left(\frac{1}{k+1}\right),
\end{align*}
which implies that
  \begin{align*}
\mathcal{H}_b(f_\gamma)(z)&=B\left(b+1-\frac{\gamma}{p},~\frac{\gamma}{p}\right)\sum^\infty_{k=0}\left(1+O\left(\frac{1}{k+1}\right)\right)
\frac{\Gamma\left(k+\frac{\gamma}{p}\right)}{\Gamma\left(\frac{\gamma}{p}\right)}\frac{z^k}{k!}\\[0.1cm]
&=B\left(b+1-\frac{\gamma}{p},~\frac{\gamma}{p}\right)\left(f_\gamma(z)+g_\gamma(z)\right),
\end{align*}
with
\begin{align*}
 \|g_\gamma\|_\infty\leq\frac{C}{\Gamma\left(\frac{\gamma}{p}\right)} \sum^\infty_{k=0}\frac{\Gamma\left(k+\frac{\gamma}{p}\right)}{(k+1)!}\leq C_{p,\alpha}<\infty.
\end{align*}
See \cite{Bo} for more details. Hence,
\begin{align*}
  \sup\limits_{1<\gamma<\alpha+2}\|g_\gamma\|_{\apa}\leq  \|g_\gamma\|_\infty \leq   C_{p,\alpha}<\infty.
\end{align*}
So,
\begin{align*}
  \|\mathcal{H}_b\|_{\apa\rightarrow\apa}\geq\frac{\|\mathcal{H}_bf_\gamma\|_{\apa}}{\|f_\gamma\|_{\apa}}\geq B\Big(b+1-\frac{\gamma}{p},~\frac{\gamma}{p}\Big)\frac{\|f_\gamma\|_{\apa}-\|g_\gamma\|_{\apa}}{\|f_\gamma\|_{\apa}}.
\end{align*}
Letting $\gamma\rightarrow\alpha+2$, it follows that
\begin{align*}
  \|\mathcal{H}_b\|_{\apa\rightarrow\apa}\geq B\Big(\frac{\alpha+2}{p},~b+1-\frac{\alpha+2}{p}\Big).
\end{align*}
The proof is complete.
\end{proof}

  \begin{remark}
 When \( b = 0 \), Theorems \ref{thm3} and \ref{thm2} respectively reduce to the classical results for the Hilbert operator \( \mathcal{H} \) on $\apa$, as discussed in  \cite{Bo}.
\end{remark}

   By combining Theorems \ref{thm3} and  \ref{thm2}, we have the following result.

\begin{corollary}\label{cor1}
  Let $b, \alpha\geq0$, $1< p< \infty$ such that  $p\geq2(\alpha+2)$. Then
  \begin{align*}
\|\mathcal{H}_b\|_{\apa\rightarrow\apa}= B\Big(\frac{\alpha+2}{p},~b+1-\frac{\alpha+2}{p}\Big).
  \end{align*}
  \end{corollary}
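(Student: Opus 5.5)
The plan is to sandwich the norm between the upper bound of Theorem \ref{thm3}(i) and the lower bound of Theorem \ref{thm2}, after checking that the hypotheses of both results hold.

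\textbf{Step 1: the standing hypothesis.} Both theorems require $\alpha+2<p(b+1)$. Since $b\ge 0$ and $p\ge 2(\alpha+2)$, we have
\[
p(b+1)\ge p\ge 2(\alpha+2)>\alpha+2 .
\]
So this condition holds automatically, and by the Proposition $\mathcal{H}_b$ is bounded on $\apa$. The remaining hypotheses of Theorem \ref{thm3} are $b,\alpha\ge0$ and $1<p<\infty$, which are exactly those of the corollary. Theorem \ref{thm2} only needs $\alpha>-1$, $b\ge 0$ and $1<p<\infty$.

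\textbf{Step 2: upper bound.} Case (i) of Theorem \ref{thm3} applies because $p\ge 2(\alpha+2)$. It gives
\[
\|\mathcal{H}_b\|_{\apa\rightarrow\apa}\le B\Big(\frac{\alpha+2}{p},~b+1-\frac{\alpha+2}{p}\Big).
\]

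\textbf{Step 3: lower bound.} Theorem \ref{thm2} gives the reverse inequality with the same constant.

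Combining Steps 2 and 3 yields the stated equality. There is no real obstacle here; the only point needing care is Step 1, confirming that the side condition $\alpha+2<p(b+1)$ is implied by $p\ge2(\alpha+2)$ and need not be assumed separately.
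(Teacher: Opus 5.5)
Your proposal is correct and is exactly the paper's argument: the corollary follows by combining the upper bound of Theorem~\ref{thm3}(i) with the lower bound of Theorem~\ref{thm2}. Your explicit check that $p\ge 2(\alpha+2)$ forces $\alpha+2<p(b+1)$, so both theorems apply, is a detail the paper leaves implicit, and your verification of it is right.
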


In Corollary \ref{cor1}, we provided the norm characterization of the generalized Hilbert operator $\mathcal{H}_b$ when $ p \geq2(\alpha+2)$. Next, we will investigate the norm characterization of the generalized Hilbert operator when  $p<2(\alpha+2)$.
The following conclusions indicate that this situation is very complicated.

\begin{theorem}\label{thm3.1}  Let $b, \alpha\geq0$ and $1< p< \infty$.   If one of the following three conditions holds,
  \begin{enumerate}
    \item[{\bf(i)}] $p_\alpha\leq p<2(\alpha+2)$ and $b\leq\frac{\alpha + 2}{p}$;
    \item [{\bf(ii)}] If $p \geq p_{\alpha}$, $2p - 3\alpha - 8 \geq 0$ and $b>\frac{\alpha + 2}{p}$;
    \item[{\bf(iii)}] $\frac{\alpha+2}{b+1}<p<p_\alpha$ and $b<\frac{\alpha + 2}{p}$ and
    \begin{align}\label{eq:3.1}
      &\int_0^1\psi_{b,p,\alpha}(s)\int_{s^2}^1(1-r)^\alpha r^{p-\alpha-3}\dd r\dd s \leq B\left(1+\alpha,~2+\frac{\alpha}{2}\right)B\Big(\frac{\alpha+2}{p},~b+1-\frac{\alpha+2}{p}\Big),
    \end{align}
  \end{enumerate}
 then
  \begin{align*}
\|\mathcal{H}_b\|_{\apa\rightarrow\apa}=B\Big(\frac{\alpha+2}{p},~b+1-\frac{\alpha+2}{p}\Big).
\end{align*}
\end{theorem}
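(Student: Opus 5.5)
The lower bound $\|\mathcal{H}_b\|_{\apa\rightarrow\apa}\ge B\big(\frac{\alpha+2}{p},b+1-\frac{\alpha+2}{p}\big)$ holds under all three hypotheses by Theorem \ref{thm2}, since each of them implies $\alpha+2<p(b+1)$. So only the upper bound needs proof. We may assume $p<2(\alpha+2)$, because for $p\ge 2(\alpha+2)$ Corollary \ref{cor1} already applies.

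\textbf{Pointwise estimate for $T_s$.} Start from \eqref{eq1}, $\|\mathcal{H}_b f\|_{\apa}\le\int_0^1\|T_sf\|_{\apa}\,\dd s$, and from the change of variables \eqref{eq6006}. Do not discard $|z|^{p-4-2\alpha}$ crudely as in Cases (ii)--(iv) of Theorem \ref{thm3}. Instead keep the information $\phi_s(\D)\subset R_{s^2}$ from \eqref{eq-694}, together with the bound \eqref{eq305-1} on $1-|\phi_s^{-1}(z)|^2$, to get
\[
\|T_sf\|_{\apa}^p\le \psi_{b,p,\alpha}(s)^p\,\Phi(s),\qquad
\Phi(s):=C_\alpha\int_{s^2}^1 M_p^p(r,f)\,(1-r)^\alpha\, r^{p-\alpha-3}\,\dd r ,
\]
with $C_\alpha$ an explicit constant coming from $2^\alpha(\alpha+1)$ and polar coordinates.

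\textbf{Averaging in $s$.} Normalise $\psi_{b,p,\alpha}$ into a probability density on $(0,1)$; its total mass is $B:=B\big(\frac{\alpha+2}{p},b+1-\frac{\alpha+2}{p}\big)$. Jensen's (H\"older's) inequality then gives
\[
\int_0^1\psi_{b,p,\alpha}\Phi^{1/p}\,\dd s\le B^{1-1/p}\Big(\int_0^1\psi_{b,p,\alpha}(s)\Phi(s)\,\dd s\Big)^{1/p}.
\]
By Fubini,
\[
\int_0^1\psi_{b,p,\alpha}\Phi\,\dd s = C_\alpha\int_0^1 M_p^p(r,f)(1-r)^\alpha r^{p-\alpha-3}\Big(\int_0^{\sqrt r}\psi_{b,p,\alpha}(t)\,\dd t\Big)\dd r .
\]
After the substitution $r=s^2$, the proof reduces to comparing this integral with $B\|f\|_{\apa}^p$, i.e. with $B\int_0^1M_p^p(s^2,f)(1-s^2)^\alpha\,2s\,\dd s$ (up to the matching constant). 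Collecting the powers of $s$, which involves also using $(1-s^2)^\alpha$ versus $(1-s)^\alpha$ and $\alpha\ge0$, the difference of the two integrands becomes $M_p^p(s^2,f)\,w(s)\,F(s)$. Here $F$ is exactly the function of Lemma \ref{lem308-6}, namely $F(s)=s^{2p-3\alpha-8}\int_0^s\psi_{b,p,\alpha}-\int_0^1\psi_{b,p,\alpha}$, and $w\ge0$ is a weight.

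\textbf{Concluding in each case.} In cases (i) and (ii), Lemma \ref{lem308-6}(i) and (iii) give $F\le0$ on $(0,1]$, so the comparison holds pointwise and we obtain $\|\mathcal{H}_bf\|\le B\|f\|$. In case (iii), Lemma \ref{lem308-6}(ii) says that $F$ changes sign exactly once, from positive to nonpositive. We then apply Lemma \ref{lem308-5} with increasing factor $s\mapsto M_p^p(s^2,f)$ (nonnegative, monotone, and smooth for analytic $f$) and $g=wF$. The hypothesis $\int_0^1 g\le0$ of that lemma is, after evaluating $\int_0^1 w(s)\,\dd s$ as a Beta integral, exactly condition \eqref{eq:3.1}; the factor $B(1+\alpha,2+\frac{\alpha}{2})$ arises from that evaluation.

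\textbf{Main obstacle.} The hardest step is the bookkeeping in the averaging step: the powers of $s$ and the weights $(1-s)^\alpha$ versus $(1-s^2)^\alpha$ must combine so that the exponent is exactly $2p-3\alpha-8$ and the integral $\int_0^1 w$ reproduces the right-hand side of \eqref{eq:3.1}. If a direct match fails, I would first adjust by inserting the elementary inequality $(1-|z|)^\alpha\le(1-|z|^2)^\alpha$ at a different stage of the argument, and only then redo the Fubini computation.
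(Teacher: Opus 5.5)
Your architecture matches the paper's. You use the bound $\|T_sf\|_{\apa}^p\le\psi_{b,p,\alpha}(s)^p\,2^{\alpha}(\alpha+1)\int_{s^2}^1\varphi(r)(1-r)^\alpha r^{p-3-\alpha}\,\dd r$ with $\varphi(r)=2M_p^p(r,f)$, then a concavity step, then Fubini, Lemma \ref{lem308-6} in cases (i)--(ii), and Lemma \ref{lem308-5} in case (iii). Your Jensen/H\"older step is a valid substitute for the paper's passage from \eqref{eq306-4} to \eqref{eq304-5}, because the comparison term does not depend on $s$.

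The gap is the step you defer as bookkeeping: the lower bound for $B\|f\|_{\apa}^p$. What you write there does not work.
\begin{itemize}
\item Your target $\int_0^1M_p^p(s^2,f)(1-s^2)^\alpha\,2s\,\dd s$ equals $\int_0^1M_p^p(r,f)(1-r)^\alpha\,\dd r$. This is only a lossy lower bound for $\|f\|_{\apa}^p/(\alpha+1)=\int_0^1M_p^p(r,f)(1-r^2)^\alpha\,2r\,\dd r$; it is the comparison behind the factor $2^{\frac{\alpha+1}{p}}$ in Case (ii) of Theorem \ref{thm3}.
\item Your fallback $(1-|z|)^\alpha\le(1-|z|^2)^\alpha$ discards $(1+r)^\alpha$ altogether.
\end{itemize}
In both versions the factor $2^\alpha$ coming from \eqref{eq305-1} is never cancelled. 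With the fallback, the difference of integrands is a positive multiple of $\varphi(r)(1-r)^\alpha r\bigl[2^\alpha r^{p-4-\alpha}\int_0^{\sqrt r}\psi_{b,p,\alpha}-B\bigr]$. For $\alpha>0$ its bracket tends to $(2^\alpha-1)B>0$ as $r\to1$, and its exponent in $\sqrt r$ is $2p-2\alpha-8$ rather than $2p-3\alpha-8$. With your literal target it is worse: the bracket is $2^{\alpha+1}r^{p-3-\alpha}\int_0^{\sqrt r}\psi_{b,p,\alpha}-B$, which tends to $(2^{\alpha+1}-1)B$ even when $\alpha=0$. A function that is positive near $1$ is not the function of Lemma \ref{lem308-6}. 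It is not $\le0$ on $[c,1)$, so Lemma \ref{lem308-5} cannot be applied either.

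The missing ingredient is the AM--GM bound $(1+r)^\alpha\ge2^\alpha r^{\alpha/2}$, which follows from $1+r\ge2\sqrt r$ and $\alpha\ge0$. It is used as follows:
\[
B\|f\|_{\apa}^p=(\alpha+1)B\int_0^1\varphi(r)(1-r)^\alpha(1+r)^\alpha r\,\dd r\ge2^\alpha(\alpha+1)B\int_0^1\varphi(r)(1-r)^\alpha r^{1+\frac{\alpha}{2}}\,\dd r .
\]
This one inequality does three things at once.
\begin{itemize}
\item The $2^\alpha$ cancels the one from \eqref{eq305-1}.
\item The extra $r^{\alpha/2}$ gives $r^{p-3-\alpha}/r^{1+\frac{\alpha}{2}}=(\sqrt r)^{2p-3\alpha-8}$. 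The difference of integrands becomes $2^\alpha(\alpha+1)\varphi(r)(1-r)^\alpha r^{1+\frac{\alpha}{2}}\bigl[(\sqrt r)^{2p-3\alpha-8}\int_0^{\sqrt r}\psi_{b,p,\alpha}-B\bigr]$, which is exactly the function of Lemma \ref{lem308-6} evaluated at $\sqrt r$.
\item The integral $\int_0^1(1-r)^\alpha r^{1+\frac{\alpha}{2}}\,\dd r=B(1+\alpha,2+\frac{\alpha}{2})$ is where the constant in \eqref{eq:3.1} comes from. In your variable this means $w(s)$ is a multiple of $s^{3+\alpha}(1-s^2)^\alpha$.
\end{itemize}
With this inequality inserted, your argument closes exactly as the paper's does.
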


\begin{proof}   Let $f\in\apa$. From Theorem \ref{thm2} we only need to prove that
    \begin{align}\label{eq304-2}
\|\mathcal{H}_b\|_{\apa\rightarrow\apa}\leq B\Big(\frac{\alpha+2}{p},~b+1-\frac{\alpha+2}{p}\Big).
\end{align}
Combining (\ref{eq-var}), (\ref{eq126}), (\ref{eq-694}),  (\ref{eq919})  and (\ref{eq305-1})  we have
 \begin{align} \label{eq304-4}
    \|T_s(f)\|^p_{\apa}
    &=\frac{\alpha+1}{s^{p-2}(1-s)^{2-pb}}\int_{\phi_s(\D)}|z|^{p-4}|f(z)|^p(1-|\phi_s^{-1}(z)|^2)^\alpha\daz\nonumber\\[0.1cm]
  %  &=\frac{\alpha+1}{s^{p-2}(1-s)^{2-pb}}\int_{D_s}|z|^{p-4}|f(z)|^p    \left(\frac{s}{1-s}\frac{1}{|z|^2}\frac{\rho_s^2-|z-c_s|^2}{\rho_s}\right)^\alpha\daz\nonumber\\[0.1cm]
    %&=\frac{\alpha+1}{s^{p-2-\alpha}(1-s)^{2-pb+\alpha}}\int_{D_s}|z|^{p-4-2\alpha}|f(z)|^p    \left(\frac{\rho_s^2-|z-c_s|^2}{\rho_s}\right)^\alpha\daz\nonumber\\[0.1cm]
    &\leq\frac{\alpha+1}{s^{p-2-\alpha}(1-s)^{2-pb+\alpha}}2^\alpha\int_{D_s}|z|^{p-4-\alpha}|f(z)|^p
    \left(1-|z|\right)^\alpha\daz\nonumber\\[0.1cm]
    &\leq\frac{\alpha+1}{s^{p-2-\alpha}(1-s)^{2-pb+\alpha}}2^\alpha\int_{R_{s^2}}|z|^{p-4-\alpha}|f(z)|^p
    \left(1-|z|\right)^\alpha\daz\nonumber\\[0.1cm]
    &=\frac{\alpha+1}{s^{p-2-\alpha}(1-s)^{2-pb+\alpha}}2^\alpha\int_{s^2}^1\frac{2}{2\pi}\int_{0}^{2\pi} |f(re^{i\theta})|^p
    \dd \theta \left(1-r\right)^\alpha r^{p-3-\alpha}\dd r\nonumber\\[0.1cm]
     &=\frac{\alpha+1}{s^{p-2-\alpha}(1-s)^{2-pb+\alpha}}2^\alpha
     \int_{s^2}^1 \varphi(r)\left(1-r\right)^\alpha r^{p-3-\alpha}\dd r,
  \end{align}
  which implies that
  \begin{align}\label{eq306-1}
    \|T_s(f)\|_{\apa}
    &\leq \psi_{b,p,\alpha}(s)\left[2^\alpha(\alpha+1) \int_{s^2}^1\varphi(r)\left(1-r\right)^\alpha r^{p-3-\alpha}\dd r\right]^\frac{1}{p}.
  \end{align}
  Here $\varphi(r)=2M^p_p(r,f)$.  Thus, by (\ref{eq1}) and (\ref{eq306-1}),
  \begin{align}\label{eq306-3}
    \|\mathcal{H}_b(f)\|_{\apa} \leq\int_0^1 \psi_{b,p,\alpha}(s)\left[2^\alpha(\alpha+1) \int_{s^2}^1\varphi(r)\left(1-r\right)^\alpha r^{p-3-\alpha}\dd r\right]^\frac{1}{p}\dd s.
  \end{align}
 Using the fact that $(1+r)^\alpha\geq 2^\alpha r^{\frac{\alpha}{2}}$, we obtain
  \begin{align}\label{eq306-2}
   &  B\Big(\frac{\alpha+2}{p},~b+1-\frac{\alpha+2}{p}\Big)\|f\|_{\apa} \nonumber \\[0.1cm]
  %= &\int_0^1 \psi_{b,p,\alpha}(s)\dd s\left[(\alpha+1)\int_{0}^1 \varphi(r)\left(1-r^2\right)^\alpha r\dd r\right]^\frac{1}{p}\nonumber\\[0.1cm]
   = &\int_0^1 \psi_{b,p,\alpha}(s)\dd s\left[(\alpha+1)\int_{0}^1 \varphi(r)\left(1-r\right)^\alpha\left(1+r\right)^\alpha r\dd r\right]^\frac{1}{p}\nonumber\\[0.1cm]
  \geq  & \int_0^1 \psi_{b,p,\alpha}(s)\dd s\left[2^\alpha(\alpha+1)\int_{0}^1 \varphi(r)\left(1-r\right)^\alpha r^{1+\frac{\alpha}{2}}\dd r\right]^\frac{1}{p}.
  \end{align}
  Set
  \begin{align*}
    I_{p,\alpha}(s)=\int_{s^2}^1\varphi(r)\left(1-r\right)^\alpha r^{p-3-\alpha}\dd r, ~~
    J_\alpha=\int_{0}^1 \varphi(r)\left(1-r\right)^\alpha r^{1+\frac{\alpha}{2}}\dd r.
  \end{align*}
 Comparing (\ref{eq306-3}) with (\ref{eq306-2}), we conclude that (\ref{eq304-2}) is true   if
   \begin{align}\label{eq306-4}
    \int_0^1 \psi_{b,p,\alpha}(s)\left(I_{p,\alpha}(s)^{1/p}-J_\alpha^{1/p}\right)\dd s\leq0.
  \end{align}
% Note the fact that  $$
%  x^\beta-y^\beta\leq\beta y^{\beta-1}(x-y),\,\,\text{for}\,\,x>0,\,y>0\,\,\text{and}\,\,\beta\in(0,1).
%  $$
 Thus, by (\ref{eqxy}),  (\ref{eq306-4}) is true if the following inequality
  \begin{align}\label{eq304-5}
   \int_0^1\psi_{b,p,\alpha}(s)\left(I_{p,\alpha}(s)-J_\alpha\right)\dd s\leq0
  \end{align}
is valid.

An application of Fubini's Theorem gives that \eqref{eq304-5} holds if and only if
     \begin{align}\label{eq307-66}
     &\int_0^1\varphi(r)\left(1-r\right)^\alpha r^{p-3-\alpha}\int_0^{\sqrt{r}}\psi_{b,p,\alpha}(s)\dd s\dd r -\int_0^1\psi_{b,p,\alpha}(s)\dd s\int_{0}^1 \varphi(r)\left(1-r\right)^\alpha r^{1+\frac{\alpha}{2}}\dd r\leq0.
  \end{align}
  Furthermore, to prove (\ref{eq307-66}), it suffices to prove
       \begin{align}\label{eq307-6}
     \int_0^1\varphi(r)h_{b,p,\alpha}(r)\dd r\leq0,
  \end{align}
 where
 \begin{align*}
 h_{b,p,\alpha}(r)&=r^{p-\alpha-3}\left(1-r\right)^\alpha\int_0^{\sqrt{r}}\psi_{b,p,\alpha}(s)\dd s-\left(1-r\right)^\alpha r^{1+\frac{\alpha}{2}}\int_0^1\psi_{b,p,\alpha}(s)\dd s\nonumber\\[0.1cm]
 &=r^{1+\frac{\alpha}{2}}\left(1-r\right)^\alpha H_{b,p,\alpha}(\sqrt{r})
 \end{align*}
 and
 \begin{align*}
H_{b,p,\alpha}(r)=r^{2p-3\alpha-8}\int_0^{r}\psi_{b,p,\alpha}(s)\dd s-\int_0^1\psi_{b,p,\alpha}(s)\dd s.
 \end{align*}

{\bf Case (i) } $p_\alpha\leq p<2(\alpha+2)$ and $b\leq\frac{\alpha + 2}{p}$. By using Lemma \ref{lem308-6} we have \( H_{b,p,\alpha}(r) \leq 0 \) and \( h_{b,p,\alpha}(r) \leq 0 \) for all \( r \in (0,1] \). Since \( \varphi(r) \) is nonnegative, it follows that (\ref{eq307-6}) holds for \( p \geq p_{\alpha} \) as desired.

{\bf Case~(ii)} If $p \geq p_{\alpha}$, $2p - 3\alpha - 8 \geq 0$ and $b>\frac{\alpha + 2}{p}$. By using Lemma \ref{lem308-6} we have \( H_{b,p,\alpha}(r) \leq 0 \) and \( h_{b,p,\alpha}(r) \leq 0 \) for all \( r \in (0,1] \). Since \( \varphi(r) \) is nonnegative, it follows that (\ref{eq307-6}) holds for \( p \geq p_{\alpha} \) as desired.

{\bf Case~(iii)} When \( \frac{\alpha + 2}{b+1} < p < p_{\alpha} \) and $b<\frac{\alpha + 2}{p}$. Since \( \varphi(r) \) is continuously differentiable and increasing on the interval \([0,1)\), by Lemmas \ref{lem308-5} and   \ref{lem308-6} we conclude that (\ref{eq307-6}) holds if%\tr{(delete $\varphi(r)$)}
\begin{align*}
& \int_0^1h_{b,p,\alpha}(r)\dd r\\[0.1cm]
= &\int_0^1\left(1-r\right)^\alpha r^{p-3-\alpha}\int_0^{\sqrt{r}}\psi_{b,p,\alpha}(s)\dd s\dd r -\int_0^1\psi_{b,p,\alpha}(s)\dd s\int_{0}^1 \left(1-r\right)^\alpha r^{1+\frac{\alpha}{2}}\dd r\nonumber\\[0.1cm]
 =&\int_0^1\psi_{b,p,\alpha}(s)\left[\int_{s^2}^1\left(1-r\right)^\alpha r^{p-3-\alpha}\dd r-\int_{0}^1 \left(1-r\right)^\alpha r^{1+\frac{\alpha}{2}}\dd r\right]\dd s\nonumber\\[0.1cm]
% =&\int_0^1\psi_{b,p,\alpha}(s)\int_{s^2}^1\left(1-r\right)^\alpha r^{p-3-\alpha}\dd r\dd s-\int_0^1\psi_{b,p,\alpha}(s)\dd s\int_{0}^1 \left(1-r\right)^\alpha r^{1+\frac{\alpha}{2}}\dd r\nonumber\\[0.1cm]
= &\int_0^1\psi_{b,p,\alpha}(s)\int_{s^2}^1\left(1-r\right)^\alpha r^{p-3-\alpha}\dd r\dd s -B \Big( 1+\alpha,~ 2+\frac{\alpha}{2} \Big) B \Big( \frac{\alpha+2}{p}, ~b+1-\frac{\alpha+2}{p} \Big) \\[0.1cm]
\leq& 0.
\end{align*}
 This completes the proof of the   theorem.
  \end{proof}

By specializing \(\alpha\) to certain values and using Theorem \ref{thm3.1}, we deduce the following sequence of corollaries.

\begin{corollary}   Let $3 < p < \frac{1}{4}(11 + \sqrt{97})$ and $0 \leq b<\frac{3}{p}$.  If
\begin{align} \label{cor-0615}
\frac{1}{(p-3)(p-2)} -   \frac{B\Big(2p-6,~b+1\Big)}{(p-3) B\Big(\frac{3}{p}, ~2p-6\Big)} + \frac{B\Big(2p-4,~b+1\Big)}{(p-2) B\Big(\frac{3}{p},~ 2p-4\Big)} \leq \frac{4}{35},
\end{align}
then
\begin{align*}
\|\mathcal{H}_b\|_{A_1^p\rightarrow A_1^p} = B\Big(\frac{3}{p},~b+1-\frac{3}{p}\Big).
\end{align*}
\end{corollary}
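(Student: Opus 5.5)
The plan is to obtain the corollary as the special case $\alpha=1$ of Theorem \ref{thm3.1}\,(iii), so the work is to check its hypotheses and to rewrite condition \eqref{eq:3.1} in closed form.

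First, the hypotheses. For $\alpha=1$ we have
\[
p_1=\tfrac34+2+\tfrac14\sqrt{9+40+48}=\tfrac14\bigl(11+\sqrt{97}\bigr),
\]
so $p<p_1$ is exactly the assumed upper bound on $p$. The condition $\frac{\alpha+2}{b+1}<p$ reads $\frac{3}{b+1}<p$, which is automatic because $p>3$ and $b\ge 0$. The condition $b<\frac{\alpha+2}{p}=\frac3p$ is assumed. Hence it only remains to show that \eqref{eq:3.1} with $\alpha=1$ is equivalent to \eqref{cor-0615}.

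Next, compute the left side of \eqref{eq:3.1} for $\alpha=1$. The inner integral is elementary. Since $p>3$, both exponents $p-4$ and $p-3$ exceed $-1$, so
\[
\int_{s^2}^1(1-r)r^{p-4}\,\dd r=\frac{1-s^{2p-6}}{p-3}-\frac{1-s^{2p-4}}{p-2}.
\]
Multiplying by $\psi_{b,p,1}(s)=s^{\frac3p-1}(1-s)^{b-\frac3p}$ and integrating over $(0,1)$ gives three Beta integrals. Write $B=B\bigl(\frac3p,b+1-\frac3p\bigr)$. Then the left side equals
\[
\Bigl(\frac1{p-3}-\frac1{p-2}\Bigr)B-\frac{1}{p-3}\,B\Bigl(\frac3p+2p-6,\,b+1-\frac3p\Bigr)+\frac1{p-2}\,B\Bigl(\frac3p+2p-4,\,b+1-\frac3p\Bigr).
\]
The first coefficient simplifies to $\frac{1}{(p-3)(p-2)}$.

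The key step is to apply Lemma \ref{lem2.1} with $x=\frac3p\in(0,b+1)$ and with $y=2p-6>0$ or $y=2p-4>0$. This gives
\[
B\Bigl(\frac3p+y,\,b+1-\frac3p\Bigr)=B\,\frac{B(y,b+1)}{B\bigl(\frac3p,y\bigr)}.
\]
After this substitution the whole left side becomes $B$ times the left-hand expression in \eqref{cor-0615}.

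On the right side of \eqref{eq:3.1},
\[
B\Bigl(1+\alpha,2+\tfrac{\alpha}{2}\Bigr)=B\bigl(2,\tfrac52\bigr)=\frac{\Gamma(2)\Gamma(5/2)}{\Gamma(9/2)}=\frac{1}{\frac72\cdot\frac52}=\frac4{35}.
\]
Dividing both sides by $B>0$ shows that \eqref{eq:3.1} is exactly \eqref{cor-0615}, and Theorem \ref{thm3.1}\,(iii) then yields the norm identity. The only real obstacle is this bookkeeping with Lemma \ref{lem2.1} and the $s$-exponents; there is no analytic difficulty beyond checking convergence, which is guaranteed by $p>3$.
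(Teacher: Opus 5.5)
Your proposal is correct and follows the paper's own argument: specialize Theorem~\ref{thm3.1}\,(iii) to $\alpha=1$, evaluate $\int_{s^2}^1(1-r)r^{p-4}\,\dd r=\frac{1}{(p-3)(p-2)}-\frac{s^{2p-6}}{p-3}+\frac{s^{2p-4}}{p-2}$, rewrite the resulting Beta integrals via Lemma~\ref{lem2.1} with $x=\frac{3}{p}$ and $y=2p-6$, $y=2p-4$, and use $B\bigl(2,\frac{5}{2}\bigr)=\frac{4}{35}$. You also verify explicitly that $p_1=\frac{1}{4}(11+\sqrt{97})$ and that $\frac{3}{b+1}<p$ holds automatically, two hypothesis checks the paper leaves implicit.
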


\begin{proof}
 Since $0<\frac{3}{p}<1$ and $2p-6>0$, by Lemma \ref{lem2.1} we have
  \begin{align*}
&\int_0^1 \psi_{b,p,1}(s) \Big( \int_{s^2}^1 r^{p-4}(1-r) dr \Big) \dd s \\[0.1cm]
= & \int_0^1 \psi_{b,p,1}(s) \left[ \frac{1}{(p-3)(p-2)} - \frac{s^{2p-6}}{p-3} + \frac{s^{2p-4}}{p-2} \right] \dd s  \\[0.1cm]
= & \ \frac{1}{(p-3)(p-2)} B\Big(\frac{3}{p}, ~b+1-\frac{3}{p}\Big) - \frac{1}{p-3} B\Big(\frac{3}{p} + 2p - 6,~ b+1 - \frac{3}{p}\Big) \\[0.1cm]
& + \frac{1}{p-2} B\Big(\frac{3}{p} + 2p - 4,~ b+1 - \frac{3}{p}\Big) \\[0.1cm]
= & \ \frac{1}{(p-3)(p-2)} B\Big(\frac{3}{p},~ b+1-\frac{3}{p}\Big)- \frac{1}{p-3} \frac{B\Big(\frac{3}{p},~ b+1-\frac{3}{p}\Big)B\Big(2p-6,~ b+1\Big)}{ B\Big(\frac{3}{p}, ~2p-6\Big)}\\[0.1cm]
 & +\frac{1}{p-2}\frac{B\Big(\frac{3}{p}, ~b+1-\frac{3}{p}\Big)B\Big(2p-4,~ b+1\Big)}{ B\Big(\frac{3}{p}, ~2p-4\Big)}\\[0.1cm]
% = & \ \left[ \frac{1}{(p-3)(p-2)} - \frac{B\Big(2p-6,~b+1\Big)}{(p-3) B\Big(\frac{3}{p},~2p-6\Big)} + \frac{B\Big(2p-4,~b+1\Big)}{(p-2) B\Big(\frac{3}{p},~ 2p-4\Big)} \right] B\Big(\frac{3}{p},~ 1 +b- \frac{3}{p}\Big)\\[0.1cm]
= & \Big[ \frac{1}{(p-3)(p-2)} - f_b(p) \Big] B\Big(\frac{3}{p}, ~1 +b- \frac{3}{p}\Big).
\end{align*}

On the other hand,
\begin{align*}
B\Big(2,~ \frac{5}{2}\Big) = \int_0^1 r^{\frac{3}{2}}(1-r) dr = \frac{4}{35}.
\end{align*}

An application of {\bf case~(iii)} of Theorem \ref{thm3.1} for $\alpha = 1$ yields the desired conclusion.
\end{proof}

\begin{remark}
The literature \cite{Da} verifies that condition \eqref{cor-0615} fails to hold under the setting $\alpha = 1$ and $p=4<\frac{1}{4}(11+\sqrt{97})$. Consequently, Condition \eqref{eq:3.1} in Theorem \ref{thm3.1} cannot be satisfied for   all $\alpha\geq0$ and $\alpha + 2 < p < p_{\alpha}$.
 \end{remark}

\begin{corollary}
  Let $2 < p < 2 + \sqrt{3}$ and $0 \leq b<\frac{ 2}{p}$.  If
\begin{align*}
\frac{1}{p-2} -  \frac{B\Big(2p-4,~b+1\Big)}{(p-2) B\Big(\frac{2}{p},~ 2p-4\Big)} \leq \frac{1}{2},
\end{align*}
then
\begin{align*}
\|\mathcal{H}_b\|_{A^p\rightarrow A^p} = B\Big(\frac{2}{p},~b+1-\frac{2}{p}\Big).
\end{align*}
\end{corollary}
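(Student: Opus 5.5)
This is the $\alpha=0$ specialization of case (iii) of Theorem \ref{thm3.1}, in the same way as the preceding corollary handled $\alpha=1$. For $\alpha=0$ we have $p_0=2+\frac14\sqrt{48}=2+\sqrt3$. The hypotheses $2<p<2+\sqrt3$ and $0\le b<\frac2p$ then give $\frac{2}{b+1}<p<p_0$ and $b<\frac{\alpha+2}{p}$. The first of these holds because $\frac{2}{b+1}\le 2<p$. So the only thing left to check is that condition \eqref{eq:3.1} reduces to the stated inequality.

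With $\alpha=0$ the inner integral is elementary:
\begin{align*}
\int_{s^2}^1 r^{p-3}\,\dd r=\frac{1-s^{2p-4}}{p-2},
\end{align*}
which is valid since $p>2$. Integrating against $\psi_{b,p,0}(s)=s^{\frac2p-1}(1-s)^{b-\frac2p}$ gives
\begin{align*}
\frac{1}{p-2}B\Big(\frac2p,~b+1-\frac2p\Big)-\frac{1}{p-2}B\Big(\frac2p+2p-4,~b+1-\frac2p\Big).
\end{align*}
Next I apply Lemma \ref{lem2.1} with $x=\frac2p\in(0,b+1)$ and $y=2p-4>0$. It rewrites the second Beta value as
\begin{align*}
\frac{B\big(\frac2p,~b+1-\frac2p\big)\,B(2p-4,~b+1)}{B\big(\frac2p,~2p-4\big)}.
\end{align*}
The left side of \eqref{eq:3.1} therefore equals
\begin{align*}
\Big[\frac{1}{p-2}-\frac{B(2p-4,~b+1)}{(p-2)B\big(\frac2p,~2p-4\big)}\Big]B\Big(\frac2p,~b+1-\frac2p\Big).
\end{align*}

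On the right side of \eqref{eq:3.1} the constant is $B(1,2)=\int_0^1 r\,\dd r=\frac12$. Dividing both sides by the positive factor $B\big(\frac2p,b+1-\frac2p\big)$ shows that \eqref{eq:3.1} is exactly the hypothesis of the corollary. Theorem \ref{thm3.1}(iii) then yields
\begin{align*}
\|\mathcal H_b\|_{A^p\to A^p}=B\Big(\frac2p,~b+1-\frac2p\Big).
\end{align*}
The argument is routine. The only points needing care are that the parameters satisfy the conditions of Lemma \ref{lem2.1} and that the range condition $\frac{2}{b+1}<p$ holds, both of which were checked above.
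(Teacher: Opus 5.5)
Your proposal is correct and follows the paper's proof: you specialize Theorem \ref{thm3.1}(iii) to $\alpha=0$ (so $p_0=2+\sqrt3$), evaluate $\int_{s^2}^1 r^{p-3}\,\mathrm{d}r$, rewrite $B\big(\frac2p+2p-4,\,b+1-\frac2p\big)$ via Lemma \ref{lem2.1}, and use $B(1,2)=\frac12$. The checks that $\frac{2}{b+1}\le 2<p$ and that Lemma \ref{lem2.1} applies are details the paper leaves implicit.
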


\begin{proof}
 By Lemma \ref{lem2.1} we have
\begin{align*}
&\int_0^1 \psi_{b,p,0}(s) \Big( \int_{s^2}^1 r^{p-3}dr \Big) \dd s
=   \int_0^1 \psi_{b,p,0}(s) \Big( \frac{1}{p-2} - \frac{s^{2p-4}}{p-2} \Big) \dd s  \\[0.1cm]
= & \ \frac{1}{p-2} B\Big(\frac{2}{p},~ b+1-\frac{2}{p}\Big) -\frac{1}{p-2} B\Big(\frac{2}{p} + 2p - 4,~ b+1 - \frac{2}{p}\Big) \\[0.1cm]
= & \ \frac{1}{p-2} B\Big(\frac{2}{p},~ b+1-\frac{2}{p}\Big)- \frac{1}{p-2} \frac{B\Big(\frac{2}{p},~ b+1-\frac{2}{p}\Big)B\Big(2p-4,~ b+1\Big)}{ B\Big(\frac{2}{p},~ 2p-4\Big)}\\[0.1cm]
= & \ \Big( \frac{1}{p-2} - f_b(p) \Big) B\Big(\frac{2}{p}, ~1 +b- \frac{2}{p}\Big).
\end{align*}
Since $B\left(1, 2\right) = \int_0^1(1-r) dr = \frac{1}{2},$  an application of {\bf case~(iii)} of Theorem \ref{thm3.1} for $\alpha = 0$ yields the desired conclusion.
\end{proof}

By Lemma \ref{lem2.1}, we can further obtain the following corollary.

\begin{corollary}\label{cor3.07}
  Let $\alpha \geq0$ and $\alpha + 2 < p < p_{\alpha}$ and $0\leq b<\frac{\alpha + 2}{p}$. If
  \begin{align*}%\label{eq3.0012}
\frac{1}{p - \alpha - 2} - \frac{B\Big(2(p-2-\alpha),~b+ 1\Big)}{(p - \alpha - 2)B\Big(\frac{\alpha + 2}{p},~2(p - \alpha - 2)\Big)}\leq B\Big(1 + \alpha,~ 2 + \frac{\alpha}{2}\Big),
\end{align*}
then
\begin{align*}
\|\mathcal{H}_b\|_{A_{\alpha}^p\rightarrow A_{\alpha}^p} = B\Big(\frac{\alpha+2}{p},~b+1-\frac{\alpha+2}{p}\Big).
\end{align*}
\end{corollary}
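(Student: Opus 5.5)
The plan is to reduce Corollary \ref{cor3.07} to case \textbf{(iii)} of Theorem \ref{thm3.1}. Three things must be checked: the parameter hypotheses of that case, an explicit evaluation of the left-hand side of \eqref{eq:3.1}, and that the stated inequality is exactly \eqref{eq:3.1} after dividing by the Beta factor.

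\textbf{Hypotheses.} We have $\alpha+2<p$ and $b\ge0$, so $\frac{\alpha+2}{b+1}\le\alpha+2<p<p_\alpha$. Together with $b<\frac{\alpha+2}{p}$, this is exactly the setting of case \textbf{(iii)} of Theorem \ref{thm3.1}.

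\textbf{Evaluating the inner integral.} The left side of \eqref{eq:3.1} contains $\int_{s^2}^1(1-r)^\alpha r^{p-\alpha-3}\,\dd r$. For general $\alpha\ge0$ this is an incomplete Beta function, so there is no clean closed form. To get the stated inequality, I would bound $(1-r)^\alpha\le1$ for $r\in(0,1)$ and $\alpha\ge0$. Since $p-\alpha-2>0$, this gives
\begin{align*}
\int_{s^2}^1(1-r)^\alpha r^{p-\alpha-3}\,\dd r\le\int_{s^2}^1 r^{p-\alpha-3}\,\dd r=\frac{1-s^{2(p-\alpha-2)}}{p-\alpha-2}.
\end{align*}

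\textbf{Integrating against $\psi_{b,p,\alpha}$.} The left side of \eqref{eq:3.1} is then at most
\begin{align*}
\frac{1}{p-\alpha-2}\Big[B\Big(\tfrac{\alpha+2}{p},b+1-\tfrac{\alpha+2}{p}\Big)-B\Big(\tfrac{\alpha+2}{p}+2(p-\alpha-2),b+1-\tfrac{\alpha+2}{p}\Big)\Big].
\end{align*}
I would apply Lemma \ref{lem2.1} with $x=\frac{\alpha+2}{p}$ and $y=2(p-\alpha-2)>0$; note $0<x<1\le b+1$, so the lemma applies. It rewrites the second Beta as
\begin{align*}
B\Big(\tfrac{\alpha+2}{p},b+1-\tfrac{\alpha+2}{p}\Big)\,\frac{B(2(p-\alpha-2),b+1)}{B\big(\frac{\alpha+2}{p},2(p-\alpha-2)\big)}.
\end{align*}

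\textbf{Conclusion.} Factoring out $B\big(\frac{\alpha+2}{p},b+1-\frac{\alpha+2}{p}\big)$, the bound on the left side of \eqref{eq:3.1} equals that Beta factor times the bracket in the hypothesis. The hypothesis says this bracket is at most $B(1+\alpha,2+\frac{\alpha}{2})$, so \eqref{eq:3.1} holds. Theorem \ref{thm3.1}\textbf{(iii)} then gives the norm equality.

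The only real obstacle is justifying the step $(1-r)^\alpha\le1$. It is harmless here because \eqref{eq:3.1} is only a sufficient condition, so replacing its left side by a larger quantity is legitimate. It does make the corollary weaker than Theorem \ref{thm3.1} itself. For $\alpha=0$ the step is an equality, and the result coincides with the preceding corollary.
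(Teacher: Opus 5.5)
Your proposal is correct and matches the paper's proof step for step. The paper also drops $(1-r)^\alpha\le 1$ to get the elementary inner integral $\frac{1-s^{2(p-\alpha-2)}}{p-\alpha-2}$. It then rewrites $B\big(\frac{\alpha+2}{p}+2(p-\alpha-2),b+1-\frac{\alpha+2}{p}\big)$ via Lemma \ref{lem2.1} and invokes case \textbf{(iii)} of Theorem \ref{thm3.1}. Your explicit check that $\frac{\alpha+2}{b+1}\le\alpha+2<p$ places the parameters in that case is left implicit in the paper.
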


\begin{proof}  It is obvious that the condition \eqref{eq:3.1} holds if
\begin{align}\label{eq318-000}
\int_0^1 \psi_{b,p,\alpha}(s) \int_{s^2}^1 r^{p-\alpha-3} \dd r \dd s \leq B\Big(1+\alpha, ~2+\frac{\alpha}{2}\Big) B\Big(\frac{\alpha+2}{p},~ b+1-\frac{\alpha+2}{p}\Big).
\end{align}
An application of Lemma \ref{lem2.1} gives that
\begin{align*}
&(p - \alpha - 2) \int_0^1 \psi_{b,p,\alpha}(s) \int_{s^2}^1 r^{p-\alpha-3} \dd r \dd s
=    \int_0^1 \psi_{b,p,\alpha}(s) \left[1 - s^{2(p-\alpha-2)}\right] \dd s \\[0.1cm]
= & B\Big(\frac{\alpha + 2}{p},~ b+1 - \frac{\alpha + 2}{p}\Big) - B\Big(\frac{\alpha + 2}{p} + 2(p - \alpha - 2),~ b+1 - \frac{\alpha + 2}{p}\Big) \\[0.1cm]
= & B\Big(\frac{\alpha + 2}{p},~ b+1 - \frac{\alpha + 2}{p}\Big) - \frac{B\Big(\frac{\alpha + 2}{p},~b+ 1 - \frac{\alpha + 2}{p}\Big)B\Big(2(p-2-\alpha),~b+ 1\Big)}{ B\Big(\frac{\alpha + 2}{p},~ 2(p - \alpha - 2)\Big)}.
\end{align*}
Therefore, condition (\ref{eq318-000}) holds if and only if
\begin{align*}
& \int_0^1 \psi_{b,p,\alpha}(s) \int_{s^2}^1 r^{p-\alpha-3} \dd r \dd s \\[0.1cm]
=& \frac{B\Big(\frac{\alpha + 2}{p},~ b+1 - \frac{\alpha + 2}{p}\Big)}{p - \alpha - 2} - \frac{B\Big(\frac{\alpha + 2}{p},~b+ 1 - \frac{\alpha + 2}{p}\Big)B\Big(2(p-2-\alpha),~b+ 1\Big)}{ (p - \alpha - 2)B\Big(\frac{\alpha + 2}{p},~ 2(p - \alpha - 2)\Big)}\\[0.1cm]
\leq& B\Big(1+\alpha,~2+\frac{\alpha}{2}\Big)B\Big(\frac{\alpha+2}{p},~b+1-\frac{\alpha+2}{p}\Big).
\end{align*}
The proof is complete.
\end{proof}

Using a similar method, we can also obtain  a somewhat weaker conclusion.

\begin{corollary}\label{cor3.7}
  Let $\alpha \geq0$ and $\alpha + 2 < p < p_{\alpha}$ and $0\leq b<\frac{\alpha + 2}{p}$. If
 \begin{align} \label{eq3.12}
 \frac{p (b+1)- \alpha - 2}{(p - \alpha - 2)^2} - \frac{p (b+1)- \alpha - 2}{ 2(p-2-\alpha)^3B\left(\frac{\alpha + 2}{p},~ 2(p - \alpha - 2)\right)}\leq B\left(1 + \alpha,~ 2 + \frac{\alpha}{2}\right),
 \end{align}
then
\begin{align*}
\|\mathcal{H}_b\|_{A_{\alpha}^p\rightarrow A_{\alpha}^p} = B\Big(\frac{\alpha+2}{p},~b+1-\frac{\alpha+2}{p}\Big).
\end{align*}
\end{corollary}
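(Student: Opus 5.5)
Corollary \ref{cor3.7} should follow from Corollary \ref{cor3.07}. It suffices to show that the left-hand side of the hypothesis of Corollary \ref{cor3.07} is bounded above by the left-hand side of \eqref{eq3.12}. Put $x=\frac{\alpha+2}{p}\in(0,1)$ and $q=2(p-\alpha-2)>0$. In this notation the quantity we must control is
\begin{align*}
\frac{1}{p-\alpha-2}\Big(1-\frac{B(q,~b+1)}{B(x,~q)}\Big).
\end{align*}
So the task reduces to finding a \emph{lower} bound for the ratio $B(q,b+1)/B(x,q)$. That bound should produce exactly the factor $p(b+1)-\alpha-2=p(b+1-x)$ which appears in \eqref{eq3.12}.

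We will use the elementary representation
\begin{align*}
1-\frac{B(q,b+1)}{B(x,q)}=\frac{1}{B(x,b+1-x)}\int_0^1\psi_{b,p,\alpha}(s)\big(1-s^{q}\big)\,\dd s,
\end{align*}
which is the identity from the proof of Corollary \ref{cor3.07}, obtained via Lemma \ref{lem2.1}. Next we bound $1-s^q$. Since $1-s^q=q\int_s^1 t^{q-1}\dd t$, we will try the crude bound $1-s^{q}\le q(1-s)$ when $q\ge1$, or $1-s^q\le q(1-s)s^{q-1}$ when $q<1$. Choose whichever gives an integral that evaluates as a Beta function with the right shape. For example, the first choice yields
\begin{align*}
q\,B(x,~b+2-x)=q\,\frac{b+1-x}{b+1}B(x,~b+1-x).
\end{align*}
This already produces the factor $b+1-x$, hence $p(b+1)-\alpha-2$ after multiplying by $p$. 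The extra term $-\frac{p(b+1)-\alpha-2}{2(p-\alpha-2)^3B(x,q)}$ in \eqref{eq3.12} suggests a refinement: keep the second-order part of $1-s^q$, subtract it, and evaluate the resulting piece through $B(x,q)$. Concretely, we would write $1-s^q$ as $q(1-s)$ minus a correction, and integrate that correction against $\psi_{b,p,\alpha}$ using Lemma \ref{lem2.1} in the form $B(x+q,~b+1-x)=B(x,b+1-x)B(q,b+1)/B(x,q)$. Alternatively, we would bound $B(q,b+1)\ge \frac{1}{q}-\frac{b+1}{q}\cdot(\dots)$ using Lemma \ref{lem2.2} and $B(q,b+1)\ge \frac{(b+1)\,\cdot}{\cdots}$ type estimates, matching the constants $\frac{1}{2q^{\cdot}}$ with $q=2(p-\alpha-2)$.

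The main obstacle is this matching. We must pick the precise inequality (for $B(q,b+1)$, or for $1-s^q$) so that the result is exactly the left side of \eqref{eq3.12}, uniformly in the regime $b<x$ and $q$ possibly less than $1$. Once the inequality "LHS of Cor.\ \ref{cor3.07} $\le$ LHS of \eqref{eq3.12}" is established, the hypothesis \eqref{eq3.12} implies the hypothesis of Corollary \ref{cor3.07}. The norm identity then follows from Corollary \ref{cor3.07}, that is, from Theorem \ref{thm3.1}(iii) combined with Theorem \ref{thm2}.
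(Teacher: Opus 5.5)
\textbf{There is a genuine gap.} Your reduction is the right one, and it matches the paper's structure. The paper also reduces to \eqref{eq327.0}, which is the same inequality as \eqref{eq318-000}. In other words, it shows that the left side of the hypothesis of Corollary \ref{cor3.07} is at most the left side of \eqref{eq3.12}.

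That comparison is the whole content of the corollary, and your proposal does not prove it. The Lemma \ref{lem2.2} alternatives are left as placeholders, and the route you do spell out cannot work. Write $F(b)=1-\frac{B(q,b+1)}{B(x,q)}$. Then the left side of Corollary \ref{cor3.07} is $\frac{F(b)}{p-\alpha-2}$. Because $B(q,1)=\frac{1}{q}$, the left side of \eqref{eq3.12} is exactly $\frac{b+1-x}{1-x}\cdot\frac{F(0)}{p-\alpha-2}$.

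So the two sides coincide at $b=0$. Any strict pointwise majorant of $1-s^q$ therefore already loses at $b=0$. Your bound $1-s^q\le q(1-s)$ gives $\frac{F(b)}{p-\alpha-2}\le\frac{2(b+1-x)}{b+1}$. At $b=0$ and $q>1$ (which occurs in the admissible range, e.g.\ $\alpha=0$, $2.5<p<2+\sqrt{3}$), this bound equals $2(1-x)$. That is strictly larger than the left side of \eqref{eq3.12}, so \eqref{eq3.12} does not imply the condition you would obtain. The factor $1-s^q$ must be kept intact. The factor $p(b+1)-\alpha-2$ does not come from integrating $1-s$ against $\psi_{b,p,\alpha}$.

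The missing step is to remove the $b$-dependence from the numerator and the denominator separately in
\[
F(b)=\frac{1}{B(x,b+1-x)}\int_0^1\psi_{b,p,\alpha}(s)(1-s^q)\,\dd s .
\]
\begin{itemize}
\item Numerator: since $(1-s)^b\le 1$, it is at most $\int_0^1\psi_{0,p,\alpha}(s)(1-s^q)\,\dd s=B(x,1-x)F(0)$.
\item Denominator: Lemma \ref{lem2.1} gives $B(x,b+1-x)=\frac{B(x,1-x)}{(b+1-x)B(1-x,b+1)}$. For $b\ge0$ we have $B(1-x,b+1)\le B(1-x,1)=\frac{1}{1-x}$, hence $B(x,b+1-x)\ge\frac{1-x}{b+1-x}B(x,1-x)$.
\end{itemize}
Dividing the two bounds gives $F(b)\le\frac{b+1-x}{1-x}F(0)$, which is exactly the required comparison. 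Substituting $\frac{b+1-x}{1-x}=\frac{p(b+1)-\alpha-2}{p-\alpha-2}$ and $F(0)=1-\frac{1}{2(p-\alpha-2)B(x,q)}$ reproduces \eqref{eq3.12} term by term. This is the paper's argument; Lemma \ref{lem2.2} is not needed here.
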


\begin{proof}
  It is obvious that the condition \eqref{eq:3.1} holds if
\begin{align}\label{eq327.0}
\int_0^1 \psi_{b,p,\alpha}(s) \int_{s^2}^1 r^{p-\alpha-3} \dd r \dd s \leq B\left(1+\alpha, ~2+\frac{\alpha}{2}\right) B\Big(\frac{\alpha+2}{p},~ b+1-\frac{\alpha+2}{p}\Big).
\end{align}
By Lemma \ref{lem2.1}, we have
\begin{align*}
  B\Big(\frac{\alpha+2}{p},~ b+1-\frac{\alpha+2}{p}\Big)&=\frac{B\Big(1,~ b+1-\frac{\alpha+2}{p}\Big)B\Big(\frac{\alpha+2}{p}, ~ 1-\frac{\alpha+2}{p}\Big)}{B\Big(1-\frac{\alpha+2}{p},~b+1\Big)}\\[0.1cm]
  &\geq \frac{B\Big(1,~ b+1-\frac{\alpha+2}{p}\Big)B\Big(\frac{\alpha+2}{p},~ 1-\frac{\alpha+2}{p}\Big)}{B\Big(1-\frac{\alpha+2}{p},~1\Big)}\\[0.1cm]
  &=B\left(\frac{\alpha+2}{p}, ~1-\frac{\alpha+2}{p}\right)\frac{p-2-\alpha}{p(b+1)-2-\alpha}.
\end{align*}
Since $\psi_{b,p,\alpha}(s)\leq\psi_{0,p,\alpha}(s),$ it easy to see that \eqref{eq327.0} holds if
\begin{align}\label{eq-318}
 &\int_0^1 \psi_{0,p,\alpha}(s) \int_{s^2}^1 r^{p-\alpha-3} \dd r \dd s\nonumber\\[0.1cm]
 \leq & B\Big(1+\alpha,~ 2+\frac{\alpha}{2}\Big)B\Big(\frac{\alpha+2}{p}, ~1-\frac{\alpha+2}{p}\Big)\frac{p-2-\alpha}{p(b+1)-2-\alpha}.
\end{align}
Again, an application of Lemma \ref{lem2.1} with $b=0$ gives that
\begin{align*}
&(p - \alpha - 2) \int_0^1 \psi_{0,p,\alpha}(s) \int_{s^2}^1 r^{p-\alpha-3} \dd r \dd s
=  \int_0^1 \psi_{0,p,\alpha}(s) \left[1 - s^{2(p-\alpha-2)}\right] \dd s \\[0.1cm]
= & B\Big(\frac{\alpha + 2}{p}, ~1 - \frac{\alpha + 2}{p}\Big) - B\Big(\frac{\alpha + 2}{p} + 2(p - \alpha - 2),~ 1 - \frac{\alpha + 2}{p}\Big) \\[0.1cm]
= & B\Big(\frac{\alpha + 2}{p}, ~1 - \frac{\alpha + 2}{p}\Big) - \frac{B\Big(\frac{\alpha + 2}{p},~ 1 - \frac{\alpha + 2}{p}\Big)}{ 2(p-2-\alpha)B\Big(\frac{\alpha + 2}{p},~ 2(p - \alpha - 2)\Big)}.
\end{align*}
Therefore, the condition (\ref{eq-318}) holds if and only if
\begin{align*}
 \int_0^1 \psi_{0,p,\alpha}(s) \int_{s^2}^1 r^{p-\alpha-3} \dd r \dd s
=&\frac{B\Big(\frac{\alpha + 2}{p},~ 1 - \frac{\alpha + 2}{p}\Big)}{p - \alpha - 2} - \frac{B\Big(\frac{\alpha + 2}{p},~1 - \frac{\alpha + 2}{p}\Big)}{ 2(p - \alpha - 2)^2B\Big(\frac{\alpha + 2}{p},~ 2(p - \alpha - 2)\Big)}\\[0.1cm]
\leq& B\Big(1+\alpha,~ 2+\frac{\alpha}{2}\Big)B\Big(\frac{\alpha+2}{p}, ~1-\frac{\alpha+2}{p}\Big)\frac{p-2-\alpha}{p(b+1)-2-\alpha}.
\end{align*}
The proof is complete.
\end{proof}

The following theorem illustrates that for some small values of \( p \), the conjecture still holds.
Its proof mainly relies on the combination of Corollary \ref{cor3.7} and Lemma \ref{lem2.2}.

\begin{theorem}\label{thm3.8}
  Let $\alpha \geq0$  and $0\leq b<\frac{\alpha + 2}{p}$. Then there exists some $\beta_\alpha$ only depending on $\alpha$ with $\alpha + 2 < \beta_\alpha \leq \alpha + \frac{5}{2}$ such that
\begin{align*}
\|\mathcal{H}_b\|_{A_\alpha^p\rightarrow A_\alpha^p} &= B\Big(\frac{\alpha + 2}{p},~ b+1 - \frac{\alpha + 2}{p}\Big)
\end{align*}
for all $\alpha + 2 < p \leq \beta_\alpha$.
\end{theorem}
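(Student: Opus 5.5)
The plan is to verify the sufficient condition \eqref{eq3.12} of Corollary \ref{cor3.7} for all $p$ in a right neighbourhood of $\alpha+2$, uniformly in $b\in[0,\frac{\alpha+2}{p})$. Put $x=\frac{\alpha+2}{p}\in(0,1)$ and $\varepsilon=p-\alpha-2>0$, so $y:=2(p-\alpha-2)=2\varepsilon$. The left side of \eqref{eq3.12} factors as
\[
\frac{p(b+1)-\alpha-2}{\varepsilon^2}\Big(1-\frac{1}{2\varepsilon B(x,2\varepsilon)}\Big).
\]
Since $b<x$ gives $p(b+1)-\alpha-2< pb+\varepsilon< \alpha+2+\varepsilon$, it suffices to bound the bracket by $C\varepsilon^2$ for small $\varepsilon$.

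For the bracket I would use Lemma \ref{lem2.2} with $0<x\le 1$ and $0<y=2\varepsilon\le 1$ (valid once $\varepsilon\le 1/2$, i.e.\ $p\le \alpha+\frac52$; this is where the bound $\beta_\alpha\le\alpha+\frac52$ comes from). The upper bound $B(x,y)\le \frac{x+y}{xy(1+xy)}$ gives
\[
yB(x,y)\le \frac{x+y}{x(1+xy)},\qquad 1-\frac{1}{yB(x,y)}\le 1-\frac{x(1+xy)}{x+y}=\frac{y(1-x^2)}{x+y}\le \frac{y(1-x^2)}{x}.
\]
This is only $O(\varepsilon)$, not $O(\varepsilon^2)$, so the crude route gives a left side of order $\frac{\alpha+2+\varepsilon}{\varepsilon^2}\cdot\frac{2\varepsilon(1-x^2)}{x}$. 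That blows up, which shows the bound must exploit $1-x^2$: we have $1-x=\varepsilon/p$, so $1-x^2\le 2\varepsilon/p$. The bracket is then $\le \frac{2\varepsilon\cdot 2\varepsilon/p}{x}=\frac{4\varepsilon^2}{\alpha+2}$. Hence the left side of \eqref{eq3.12} is at most
\[
\frac{4(\alpha+2+\varepsilon)}{\alpha+2}.
\]
This does not tend to zero, so I must sharpen the estimate of $p(b+1)-\alpha-2$.

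With $b<x$, we get $p(b+1)-\alpha-2=pb+\varepsilon<\alpha+2+\varepsilon$, so no further gain is available from $b$. The comparison is therefore with $B(1+\alpha,2+\frac\alpha2)$, which is a fixed positive constant, and the limit $4$ may be too large. The main obstacle is thus obtaining a constant below $B(1+\alpha,2+\frac\alpha2)$. I would first redo the bracket more carefully, using the exact value $\frac{y(1-x^2)}{x+y}$ together with the lower/upper Lemma \ref{lem2.2} inequalities, and the precise $1-x^2=(1-x)(1+x)$ with $1+x\to 2$. If the constant is still too big, I would abandon the $b<x$ majorization for large $b$. Instead I would argue directly in Corollary \ref{cor3.07} form, using Lemma \ref{lem2.1} with $B(2\varepsilon,b+1)/B(x,2\varepsilon)\to 1$ at rate $O(\varepsilon)$ uniformly in $b\in[0,1]$ (via $\Gamma$ expansions), so that the left side is $O(1)\cdot\varepsilon\cdot$(digamma differences)$\to$ a small limit.

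Once the left side of \eqref{eq3.12} (or of Corollary \ref{cor3.07}) is shown to be $\le g_\alpha(\varepsilon)$ with $g_\alpha$ continuous, independent of $b$, and $\limsup_{\varepsilon\to0^+}g_\alpha(\varepsilon)<B(1+\alpha,2+\frac\alpha2)$, define $\beta_\alpha=\alpha+2+\varepsilon_0$. Here $\varepsilon_0\le\frac12$ is chosen so that the inequality holds on $(0,\varepsilon_0]$; also $\alpha+\frac52<p_\alpha$, so the hypothesis $p<p_\alpha$ is met. Corollary \ref{cor3.7} (resp.\ \ref{cor3.07}) then gives the norm identity for all $\alpha+2<p\le\beta_\alpha$.
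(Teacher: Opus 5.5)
\textbf{There is a genuine gap: the decisive inequality is never proved, and within this scheme it cannot be proved.}

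Your computation is correct as far as it goes. With $x=\frac{\alpha+2}{p}$ and $\varepsilon=p-\alpha-2$, Lemma~\ref{lem2.2} bounds the left side of \eqref{eq3.12} by $\frac{2(pb+\varepsilon)(1+x)}{p(x+2\varepsilon)}$, which tends to $4b$ as $\varepsilon\to0^+$. On the other side, $B(1+\alpha,2+\frac{\alpha}{2})\le B(1,2)=\frac12$. Moreover $b$ may be close to $1$, because $\frac{\alpha+2}{p}\to1$.

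Sharper estimates do not help, because the exact quantity has the same defect. One has $2\varepsilon B(x,2\varepsilon)=\frac{\Gamma(x)\Gamma(1+2\varepsilon)}{\Gamma(x+2\varepsilon)}=1+\frac{\pi^2\varepsilon^2}{3p}+O(\varepsilon^3)$, so the left side of \eqref{eq3.12} tends to $\frac{\pi^2}{3}b$.

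Your fallback via Corollary~\ref{cor3.07} fails for the same reason: you dropped the prefactor $1/\varepsilon$. With $\Psi=\Gamma'/\Gamma$, one has $1-\frac{B(2\varepsilon,b+1)}{B(x,2\varepsilon)}=2\varepsilon\bigl(\Psi(b+1)-\Psi(x)\bigr)+O(\varepsilon^2)$. Hence the left side of Corollary~\ref{cor3.07} tends to $2\bigl(\Psi(b+1)-\Psi(1)\bigr)$, which equals $2$ at $b=1$. That is not a small limit.

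Consequently, for $b$ close to $1$ neither sufficient condition holds on any interval $(\alpha+2,\beta]$. For $\alpha=0$ the condition of Corollary~\ref{cor3.07} is literally \eqref{eq:3.1}. So Theorem~\ref{thm3.1}(iii) itself cannot give the statement uniformly in $b<\frac{\alpha+2}{p}$.

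\textbf{The paper's proof has the same problem.} Your route (Corollary~\ref{cor3.7} plus Lemma~\ref{lem2.2}) is the paper's route, and the paper runs into exactly this obstruction without resolving it. It obtains your bound in the form $\frac{p(b+1)-\alpha-2}{p-\alpha-2}f_\alpha(p)$ and then replaces $f_\alpha(p)$ by $f_\alpha(\alpha+\frac52)=\frac{2}{2\alpha+5}$. This has three consequences:
\begin{enumerate}
\item It discards the factor $p-\alpha-2$ hidden in $f_\alpha(p)$, since $1-x^2=(1+x)\varepsilon/p$.
\item It keeps the prefactor $1+\frac{pb}{p-\alpha-2}$, which is unbounded as $p\downarrow\alpha+2$ whenever $b>0$.
\item The $\beta_\alpha$ produced by the intermediate value theorem is defined by an equation that still contains $p$ and $b$. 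The fact $f_\alpha(\alpha+2)=0$ is irrelevant, because the product tends to $4b$, not $0$.
\end{enumerate}
The argument is sound only for $b=0$ (Dai's setting), where the prefactor equals $1$.

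\textbf{What the method actually yields} is a small-$b$ version. If $4b_0<B(1+\alpha,2+\frac{\alpha}{2})$, then the Lemma~\ref{lem2.2} bound and Corollary~\ref{cor3.7} give some $\beta_{\alpha,b_0}\in(\alpha+2,\alpha+\frac52]$ such that the norm identity holds for all $0\le b\le b_0$ and $\alpha+2<p\le\beta_{\alpha,b_0}$. A $\beta_\alpha$ depending only on $\alpha$ and valid for every $b<\frac{\alpha+2}{p}$ is beyond the reach of Corollaries~\ref{cor3.7} and~\ref{cor3.07}.
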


\begin{proof}
   Since $0 < \frac{\alpha + 2}{p} < 1, \  0 < 2(p - \alpha - 2) \leq 1,$ applying Lemma \ref{lem2.2} we obtain
\begin{align}\label{eq3.13}
&\frac{p(b+1)-\alpha-2}{(p-\alpha-2)^2} - \frac{p(b+1)-\alpha-2}{2(p-2-\alpha)^3 B\left(\frac{\alpha+2}{p}, ~2(p-\alpha-2)\right)} \nonumber\\[0.1cm]
\leq& \frac{p(b+1)-\alpha-2}{(p-\alpha-2)^2} - \frac{p(b+1)-\alpha-2}{2(p-2-\alpha)^3}\cdot   \frac{2(p-2-\alpha)\frac{\alpha+2}{p}\left[1 + 2(p-2-\alpha)\frac{\alpha+2}{p}\right]}{\frac{\alpha+2}{p} + 2(p-2-\alpha)} \nonumber\\[0.1cm]
= &\frac{p(b+1)-\alpha-2}{(p-\alpha-2)^2} - \frac{\frac{\alpha+2}{p}\left[1 + 2(p-2-\alpha)\frac{\alpha+2}{p}\right]\left[p(b+1)-\alpha-2\right]}{(p-2-\alpha)^2\left[\frac{\alpha+2}{p} + 2(p-2-\alpha)\right]} \nonumber\\[0.1cm]
%= &\frac{p(b+1)-\alpha-2}{(p-\alpha-2)^2}\Big[\frac{\frac{\alpha+2}{p} + 2(p-2-\alpha)}{\frac{\alpha+2}{p} + 2(p-2-\alpha)} - \frac{\frac{\alpha+2}{p}+2(p-2-\alpha)\Big(\frac{\alpha+2}{p}\Big)^2}{\frac{\alpha+2}{p} + 2(p-2-\alpha)}\Big] \nonumber\\[0.1cm]
%= &\frac{p(b+1)-\alpha-2}{(p-\alpha-2)^2}\left\{\frac{2(p-2-\alpha)\left[1 - \left(\frac{\alpha+2}{p}\right)^2\right]}{\frac{\alpha+2}{p} + 2(p-2-\alpha)}\right\}\nonumber \\[0.1cm]
= &\frac{p(b+1)-\alpha-2}{p-\alpha-2} \cdot \frac{2\Big[1 - \Big(\frac{\alpha+2}{p}\Big)^2\Big]}{\frac{\alpha+2}{p} + 2(p-2-\alpha)}\nonumber\\[0.1cm]
:=&\frac{p(b+1)-\alpha-2}{p-\alpha-2}f_\alpha(p).
\end{align}
As demonstrated in the proof of Theorem 3.8 in \cite{Da}, it can be established that $f_\alpha'(p) > 0$ and $f_\alpha(p)$ is increasing on $(\alpha+2, \frac{5}{2} + \alpha]$. Therefore,
\begin{align}\label{eq3.14}
\frac{p(b+1)-\alpha-2}{p-\alpha-2}f_\alpha(p) &\leq \frac{p(b+1)-\alpha-2}{p-\alpha-2}f_\alpha\left(\frac{5}{2} + \alpha\right) \nonumber\\[0.1cm]
&=\frac{2(p(b+1)-\alpha-2)}{(p-\alpha-2)(5 + 2\alpha)}.
\end{align}
If
\begin{equation*}%\label{eq3.15}
\frac{2(p(b+1)-\alpha-2)}{(p-\alpha-2)(5 + 2\alpha)}\leq B\Big(1 + \alpha,~ 2 + \frac{\alpha}{2}\Big),
\end{equation*}
from (\ref{eq3.13}) and (\ref{eq3.14}) it follows that (\ref{eq3.12}) holds for $\alpha + 2 < p \leq \alpha + \frac{5}{2}$.

On the other hand,
\begin{equation*}
\frac{2(p(b+1)-\alpha-2)}{(p-\alpha-2)(5 + 2\alpha)}> B\Big(1 + \alpha,~ 2 + \frac{\alpha}{2}\Big),
\end{equation*}
since $f_\alpha(\alpha+2) = 0$ and $f_\alpha(p)$ is increasing, by the intermediate value theorem there exists a unique constant $\beta_\alpha$ with $\alpha + 2 < \beta_\alpha < \alpha + \frac{5}{2}$ such that
\begin{equation*}
\frac{p(b+1)-\alpha-2}{p-\alpha-2}f(\beta_\alpha) = B\Big(1 + \alpha,~ 2 + \frac{\alpha}{2}\Big).
\end{equation*}
In this case (\ref{eq3.12}) remains valid for $\alpha + 2 < p \leq \beta_\alpha$. We note that
\begin{equation*}
p_\alpha > 2 + \frac{3\alpha}{4} + \frac{3\alpha + 6}{4} = \frac{3\alpha}{2} + \frac{7}{2} > \alpha + \frac{5}{2} \geq \beta_\alpha.
\end{equation*}
Hence the desired result follows from Corollary \ref{cor3.7}.
\end{proof}

When $\alpha=1$, Lemma \ref{lem5.1} can be used to slightly weaken Condition $\it {(iii)}$ of Theorem \ref{thm3.1}.

\begin{proposition}\label{pro5.1} Let $b\geq 0$ and  $\frac{3}{b+1} < p < \min\{ \frac{3}{b}, \frac{1}{4}(11 + \sqrt{97})\}$. If
\begin{align*}
&\int_0^1 \psi_{b,p,1}(s) \left( \int_{s^2}^1 r^{p-4}(1-r) \dd r \right) \dd s - \int_0^1 \frac{s \psi_{b,p,1}(s)}{1-s} \left( \int_{s^2}^1 r^{p-\frac{9}{2}}(1-r)(1-r^{\frac{1}{2}}) \dd r \right) \dd s \\[0.1cm]
\leq& \frac{4}{35} B\left(\frac{3}{p},~ 1 +b- \frac{3}{p}\right),
\end{align*}
then
\begin{align*}
\|\mathcal{H}_b\|_{A_1^p\rightarrow A_1^p} = B\left(\frac{3}{p},~b+1-\frac{3}{p}\right).
\end{align*}
\end{proposition}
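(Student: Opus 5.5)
The plan is to run the argument of Theorem \ref{thm3.1}(iii) with $\alpha=1$, but without discarding the negative term in the estimate for $\frac{\rho_s^2-|z-c_s|^2}{\rho_s}$. Lemma \ref{lem5.1}(iii) replaces Lemma \ref{lem308-6}(ii), and the hypothesis of the proposition plays the role of \eqref{eq:3.1}. By Theorem \ref{thm2} it suffices to prove the upper bound $\|\mathcal{H}_b f\|_{A^p_1}\le B(\frac3p,b+1-\frac3p)\|f\|_{A^p_1}$. The two conditions $p<\frac3b$ and $p<\frac14(11+\sqrt{97})$ are exactly what Lemma \ref{lem5.1}(iii) requires, and $p>\frac{3}{b+1}$ gives boundedness.

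First I sharpen the pointwise estimate. In the proof of Theorem \ref{thm3}(ii) we had, for $z\in D_s$,
\[
\frac{\rho_s^2-|z-c_s|^2}{\rho_s}\le (1-|z|)\Big[2|z|-\frac{c_s-\rho_s}{\rho_s}(1-|z|)\Big],\qquad \frac{c_s-\rho_s}{\rho_s}=\frac{s}{1-s}.
\]
Since $\alpha=1$, no power is taken, so I keep the term with $\frac{s}{1-s}$. I insert this into \eqref{eq6006} with \eqref{eq919}, enlarge $D_s$ to the annulus $R_{s^2}$ (this is legitimate because the bracket is still nonnegative on $D_s$), and pass to polar coordinates with the same normalisation as in \eqref{eq304-4}, using $\varphi(r)=2M_p^p(r,f)$. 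This should give
\[
\|T_sf\|_{A^p_1}^p\le \psi_{b,p,1}(s)^p\Big[4\int_{s^2}^1\varphi(r)(1-r)r^{p-4}\,\dd r-\frac{4s}{1-s}\int_{s^2}^1\varphi(r)\,r^{p-\frac92}(1-r)(1-r^{\frac12})\,\dd r\Big].
\]
The exact exponents, chosen to match the hypothesis, come from carrying the bookkeeping of \eqref{eq304-4} with the extra factor $1-|z|$ and one fewer power of $|z|$. I would verify this identification carefully.

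Call the bracket $4(I(s)-K(s))$. As in \eqref{eq306-2}, the target quantity is $B(\frac3p,b+1-\frac3p)\|f\|\ge\int_0^1\psi_{b,p,1}(s)\,\dd s\,[4J]^{1/p}$ with $J=\int_0^1\varphi(r)(1-r)r^{3/2}\,\dd r$. By \eqref{eq1} it then suffices to show
\[
\int_0^1\psi_{b,p,1}\big((I-K)^{1/p}-J^{1/p}\big)\,\dd s\le0 .
\]
Following the reduction from \eqref{eq306-4} to \eqref{eq304-5}, it is enough to prove the linear inequality $\int_0^1\psi_{b,p,1}(s)\,(I(s)-K(s)-J)\,\dd s\le0$. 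Applying Fubini, this becomes $\int_0^1\varphi(r)h(r)\,\dd r\le0$, where $h$ collects the kernels. After the substitution $r\mapsto\sqrt r$, I expect $h(r)=r^{3/2}(1-r)f_{b,p}(\sqrt r)$ with $f_{b,p}$ as in \eqref{eq5.1}. Here the term $s^{2p-12}(1-s)\int_0^s\frac{t\psi}{1-t}$ arises from the $K$-part, and the factor $1-r^{1/2}$ produces the $(1-s)$ that appears once $r=s^2$.

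Since $b<\frac3p$ and $p<\frac14(11+\sqrt{97})$, Lemma \ref{lem5.1}(iii) shows that $f_{b,p}$, and hence $h$, is positive on $(0,c)$ and nonpositive on $[c,1)$. Moreover $\varphi=2M_p^p(\cdot,f)$ is nonnegative, increasing and $C^1$. Lemma \ref{lem308-5} therefore reduces the claim to $\int_0^1h(r)\,\dd r\le0$. Undoing Fubini, this is precisely the displayed hypothesis, because $\int_0^1(1-r)r^{3/2}\,\dd r=B(2,\frac52)=\frac4{35}$.

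The main obstacle is the kernel bookkeeping in the first two steps. The exponents and the factor $1-r^{1/2}$ must line up exactly with $f_{b,p}$ in Lemma \ref{lem5.1}, and this depends on how the radial variable is normalised: the paper writes $R_{s^2}$ while integrating against $r^{p-3-\alpha}$. I would first redo \eqref{eq304-4} for $\alpha=1$ explicitly in the variable $r=|z|^2$, and then read off the correction term.
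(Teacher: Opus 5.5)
Your architecture is the paper's: Theorem \ref{thm3.1}(iii) with $\alpha=1$, Lemma \ref{lem5.1}(iii) in place of Lemma \ref{lem308-6}(ii), then Lemma \ref{lem308-5} and the hypothesis. However, the step you label ``bookkeeping'' is where the argument breaks, and bookkeeping alone cannot fix it.

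First, the normalisation. It is $r=|z|$, exactly as in \eqref{eq304-4}, and $R_{s^2}=\{s^2<|z|<1\}$, so there is no inconsistency to resolve. Redoing it in $r=|z|^2$ would be wrong. With your exact bracket $(1-|z|)\bigl[2|z|-\frac{s}{1-s}(1-|z|)\bigr]$, polar coordinates give the subtracted term $\frac{2s}{1-s}\int_{s^2}^1\varphi(r)\,r^{p-5}(1-r)^2\,\dd r$. They do not give $\frac{4s}{1-s}\int_{s^2}^1\varphi(r)\,r^{p-\frac92}(1-r)(1-r^{\frac12})\,\dd r$. After Fubini, division by $r^{3/2}(1-r)$ and $t=\sqrt r$, the exact kernel carries $\frac12 t^{2p-13}(1-t^2)$ where \eqref{eq5.1} has $t^{2p-12}(1-t)$. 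So neither Lemma \ref{lem5.1} nor the stated hypothesis applies to what you actually get.

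The missing idea is a deliberate pointwise weakening, which is the inequality the paper imports from \cite{Da}. Since $1+\sqrt r\ge 2\sqrt r$, one has $(1-r)^2\ge 2\sqrt r\,(1-\sqrt r)(1-r)$. Because this term is subtracted, the bracket is at most $2\sqrt r\,(1-r)\bigl[\sqrt r-\frac{s}{1-s}(1-\sqrt r)\bigr]$. This bound produces exactly the kernel $r^{p-\frac92}(1-r)(1-r^{\frac12})$ in the hypothesis. Separately, the main term uses $\|f\|^p_{A^p_1}\ge 4J$.

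Your justification for enlarging $D_s$ to $R_{s^2}$ is also wrong. What matters is the sign of the integrand on the added region $R_{s^2}\setminus D_s$, not on $D_s$. The exact bracket vanishes at $|z|=\frac{s}{2-s}$ and is negative for $s^2<|z|<\frac{s}{2-s}$. This range is nonempty because $s^2<\frac{s}{2-s}$ whenever $0<s<1$. So enlarging with the exact bracket lowers the integral and gives no upper bound.

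The weakened bracket $2\sqrt{|z|}\,(1-|z|)\bigl[\sqrt{|z|}-\frac{s}{1-s}(1-\sqrt{|z|})\bigr]$ is nonnegative precisely when $|z|\ge s^2$. That is why $R_{s^2}$ is the right annulus. The correct order is therefore:
\begin{enumerate}
\item weaken pointwise on $D_s$;
\item then enlarge to $R_{s^2}$.
\end{enumerate}
With this step inserted, the rest of your plan goes through as in the paper. That includes the concavity reduction to the linear inequality, Fubini giving $h(r)=r^{3/2}(1-r)f_{b,p}(\sqrt r)$, Lemmas \ref{lem5.1}(iii) and \ref{lem308-5}, and $B(2,\frac52)=\frac{4}{35}$.
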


\begin{proof} Recall that
  \begin{align*}
    \frac{\rho_s^2-|z-c_s|^2}{|z|^2\rho_s}&\leq \frac{2|z| - s - (2-s)|z|^2}{(1-s)|z|^2} = \frac{(1-s)(1-|z|^2) - (1-|z|)^2}{(1-s)|z|^2}.
  \end{align*}
  Combining this with   \eqref{eq6006}, \eqref{eq-694} and \eqref{eq304-4}, we   obtain
  \begin{align*}%\label{eq318-0}
    \|T_s(f)\|_{A_1^p}
    &=\psi_{b,p,1}(s)\left[2\int_{R_{s^2}}|z|^{p-4}|f(z)|^p\left(\frac{\rho_s^2-|z-c_s|^2}{|z|^2\rho_s}\right)\daz\right]^{1/p}\nonumber\\[0.1cm]
    &\leq\psi_{b,p,1}(s)\left[2\int_{R_{s^2}}|z|^{p-6}|f(z)|^p\left(1-|z|^2-\frac{ (1-|z|)^2}{1-s}\right)\daz\right]^{1/p}.
  \end{align*}
  Therefore, by the proof of Theorem \ref{thm3.1}, we just need to prove that
  \begin{align}\label{eq318-2}
\int_0^1 \psi_{b,p,1}(s) \left[ \int_{s^2}^1 \varphi(r) r^{p-5} \left( 1 - r^2 - \frac{(1-r)^2}{1-s} \right) \dd r - \int_0^1 \varphi(r) r (1 - r^2) \dd r \right] \dd s \leq 0.
\end{align}
From \cite{Da} we know that
\begin{align*}
&\int_{s^2}^1 \varphi(r) r^{p-5} \left( 1 - r^2 - \frac{(1-r)^2}{1-s} \right) \dd r - \int_0^1 \varphi(r) r (1 - r^2) \dd r \\[0.1cm]
\leq &\int_{s^2}^1 \varphi(r) r^{p-5} (1-r) \left( 2r - \frac{2s(1-r^{\frac{1}{2}})r^{\frac{1}{2}}}{1-s} \right) \dd r - 2 \int_0^1 \varphi(r) r^{\frac{3}{2}} (1-r)\dd r.
\end{align*}
Consequently, to establish inequality \eqref{eq318-2}, it suffices to prove
\begin{align*}%\label{eq318-2}
\int_0^1 \psi_{b,p,1}(s) \left[ \int_{s^2}^1 \varphi(r) r^{p-5} (1-r) \left( r - \frac{s(1-r^{\frac{1}{2}})r^{\frac{1}{2}}}{1-s} \right) \dd r -  \int_0^1 \varphi(r) r^{\frac{3}{2}} (1-r)\dd r \right] \dd s\leq0.
\end{align*}
According to Fubini's theorem, the last expression is equivalent to
\begin{align}\label{eq318-33}
\int_0^1 \varphi(r) f_{b,p}(r) \dd r \leq 0,
\end{align}
where
\begin{align*}
f_{b,p}(r) &= r^{p-4}(1-r) \int_0^{\sqrt{r}} \psi_{b,p,1}(s) \dd s - r^{p-\frac{9}{2}}(1-r)(1-r^{\frac{1}{2}}) \int_0^{\sqrt{r}} \frac{s \psi_{b,p,1}(s)}{1-s} \dd s\\[0.1cm]
&\quad - r^{\frac{3}{2}}(1-r) \int_0^1 \psi_{b,p,1}(s) \dd s \\[0.1cm]
&= r^{\frac{3}{2}}(1-r) g_{b,p}(\sqrt{r})
\end{align*}
and
\begin{align*}
g_{b,p}(r) &= r^{2p-11} \int_0^r \psi_{b,p,1}(s) \dd s - r^{2p-12}(1-r) \int_0^r \frac{s \psi_{b,p,1}(s)}{1-s} \dd s - \int_0^1 \psi_{b,p,1}(s) \dd s.
\end{align*}
Applying Lemmas \ref{lem308-5} and   \ref{lem5.1} we conclude that (\ref{eq318-33}) holds if
\begin{align*}
&\int_0^1 f_{b,p}(r) \, dr\\[0.1cm]
= &\int_0^1 \psi_{b,p,1}(s) \left\{ \int_{s^2}^1 r^{p-5}(1-r) \left[ r - \frac{s(1-r^{\frac{1}{2}})r^{\frac{1}{2}}}{1-s} \right] \, dr - \int_0^1 r^{\frac{3}{2}}(1-r) \, dr \right\} \, ds \\[0.1cm]
= &\int_0^1 \psi_{b,p,1}(s) \left[ \int_{s^2}^1 r^{p-4}(1-r) \, dr \right] \, ds - \int_0^1 \frac{s \psi_{b,p,1}(s)}{1-s} \left[ \int_{s^2}^1 r^{p-\frac{9}{2}}(1-r)(1-r^{\frac{1}{2}}) \, dr \right] \, ds \\[0.1cm]
&\quad - \frac{4}{35} B\Big( \frac{3}{p},~ 1 +b- \frac{3}{p} \Big) \\[0.1cm]
 \leq & 0.
\end{align*}
The proof is complete.
\end{proof}

When $\alpha=0$, Lemma \ref{l28} can also be used to slightly weaken Condition $\bf {(iii)}$ of Theorem \ref{thm3.1}.
Similarly to the  proof of Proposition \ref{pro5.1}, we get the following result.   We omit the details.

\begin{proposition}\label{pro5..11}   Let   $1< p< \infty$,  $ b \geq 0$ such that  $\frac{2}{b+1} < p < \min\{\frac{2}{b}, 2 + \sqrt{3}\}$. If
\begin{align*}
&\int_0^1 \psi_{b,p,0}(s) \left[ \int_{s^2}^1 r^{p-4}(1-r) \dd r \right] \dd s - \int_0^1 \frac{s \psi_{b,p,0}(s)}{1-s} \left[\int_{s^2}^1 r^{p-\frac{2}{p}}(1-r)(1-r^{\frac{1}{2}}) \dd r \right] \dd s \\[0.1cm]
\leq &\frac{1}{2} B\Big(\frac{2}{p},~ 1 +b- \frac{2}{p}\Big),
\end{align*}
then
\begin{align*}
\|\mathcal{H}_b\|_{A^p\rightarrow A^p} = B\Big(\frac{2}{p},~b+1-\frac{2}{p}\Big).
\end{align*}
\end{proposition}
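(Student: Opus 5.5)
The plan is to copy the proof of Proposition \ref{pro5.1}, with $\alpha=0$ in place of $\alpha=1$. The lower bound is already available: Theorem \ref{thm2} gives $\|\mathcal{H}_b\|_{A^p\to A^p}\ge B(\frac{2}{p},b+1-\frac{2}{p})$. It therefore suffices to prove the reverse inequality. By \eqref{eq1} we have $\|\mathcal{H}_b f\|_{A^p}\le\int_0^1\|T_s f\|_{A^p}\,\dd s$. Then \eqref{eq6006} with $\alpha=0$ gives
\begin{align*}
\|T_s f\|_{A^p}=\psi_{b,p,0}(s)\Big(\int_{D_s}|z|^{p-4}|f(z)|^p\,\daz\Big)^{1/p}.
\end{align*}

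The weight factor $(1-|\phi_s^{-1}|^2)^\alpha$ disappears when $\alpha=0$, so we cannot use the refinement $(\rho_s^2-|z-c_s|^2)/\rho_s$ directly. Instead, refine the area where $D_s$ sits. For fixed $|z|=r$, the proportion of the circle lying in $D_s$ is at most a quantity of the form $1-\frac{s(1-r^{1/2})r^{1/2}}{(1-s)\,r}$, suitably normalized. This is the $\alpha=0$ analogue of the estimate quoted from \cite{Da} in the proof of Proposition \ref{pro5.1}, and I would take it from \cite{Da} in the same way. This yields
\begin{align*}
\|T_s f\|_{A^p}\le\psi_{b,p,0}(s)\Big[\int_{s^2}^1\varphi(r)\,r^{p-3}\Big(1-\tfrac{s(1-r^{1/2})}{(1-s)r^{1/2}}(1-r)\cdots\Big)\dd r\Big]^{1/p},
\end{align*}
where $\varphi=2M_p^p(r,f)$. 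The exponents must be chosen so that, after the substitution below, the kernel is exactly the $f_{b,p}$ of Lemma \ref{l28}.

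Next, compare with $B(\frac{2}{p},b+1-\frac{2}{p})\|f\|_{A^p}$, written as $\int_0^1\psi_{b,p,0}\,\dd s\,\big[\int_0^1\varphi(r)(1-r)\,\dd r\big]^{1/p}$ up to the lower bound $(1+r)\ge 2r^{1/2}$ used in \eqref{eq306-2}. Using \eqref{eqxy} exactly as in the proof of Theorem \ref{thm3.1}, the claim reduces to a linear inequality. Fubini turns it into $\int_0^1\varphi(r)F(r)\,\dd r\le0$, with $F(r)=c(r)\,g_{b,p}(\sqrt r)$, where $c(r)\ge0$ and $g_{b,p}$ is the function of Lemma \ref{l28}. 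Since $p<2+\sqrt3$ and $b<2/p$, Lemma \ref{l28}(iii) gives a single sign change of $g_{b,p}$, from positive to nonpositive. Moreover $\varphi$ is nonnegative and increasing. So Lemma \ref{lem308-5} reduces everything to $\int_0^1F(r)\,\dd r\le0$, and by Fubini again this is precisely the hypothesis of the statement; the constant $\frac12=B(1,2)=\int_0^1(1-r)\,\dd r$ appears here.

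The main obstacle is the geometric estimate in the second paragraph. I need a pointwise bound on the part of $D_s$ lying on each circle $|z|=r$, or an equivalent weighted bound. It has to produce exactly the correction term $\frac{s(1-r^{1/2})}{1-s}$, with exponents matching the stated integrand, including the odd exponent $r^{p-2/p}$. That exponent looks like a misprint, so I would first reconcile it with the kernel of Lemma \ref{l28}. I would then check that the resulting kernel has the form $c(r)\,g_{b,p}(\sqrt r)$, so that Lemma \ref{l28} applies.
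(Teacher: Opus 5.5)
Your route is the one the paper indicates: it omits the proof and says only that it is similar to Proposition \ref{pro5.1}. However, the step you flag as the main obstacle is a genuine gap, and the mechanism you propose cannot fill it.

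In Proposition \ref{pro5.1} the negative correction term is not geometric. It comes from the weight $(1-|\phi_s^{-1}(z)|^2)^{\alpha}$ with $\alpha=1$, bounded by $\frac{s}{1-s}\cdot\frac{(1-s)(1-|z|^2)-(1-|z|)^2}{(1-s)|z|^2}$. The estimate attributed to \cite{Da} is just the elementary inequality $1-r^2-\frac{(1-r)^2}{1-s}=(1-r)\big(2r-\frac{s(1-r)}{1-s}\big)\le(1-r)\big(2r-\frac{2s(1-r^{1/2})r^{1/2}}{1-s}\big)$, together with $1+r\ge 2r^{1/2}$. At $\alpha=0$ this weight is identically $1$, so there is no $\alpha=0$ analogue to borrow.

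Your substitute, the proportion of the circle $|z|=r$ lying in $D_s$, yields nothing, because arc length does not control $|f|^p$-mass. For every $r\in(\frac{s}{2-s},1)$ the set $D_s\cap\{|z|=r\}$ contains an arc $\{re^{i\theta}:|\theta|<\Theta\}$. The function $f_N(z)=e^{Nz}$ puts a fraction tending to $1$ of $\int_{-\pi}^{\pi}|f_N(re^{i\theta})|^p\,\dd\theta$ on that arc as $N\to\infty$. Hence any circle-by-circle bound $\int_{\{\theta:\,re^{i\theta}\in D_s\}}|f(re^{i\theta})|^p\,\dd\theta\le c(r,s)\int_{-\pi}^{\pi}|f(re^{i\theta})|^p\,\dd\theta$ valid for all $f$ forces $c(r,s)\ge 1$. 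The best this gives is $\|T_sf\|_{A^p}^p\le\psi_{b,p,0}(s)^p\int_{s/(2-s)}^1\varphi(r)\,r^{p-3}\,\dd r$, which is essentially the $\alpha=0$ case of Theorem \ref{thm3.1}(iii), with no second term. Any genuine improvement would have to couple different radii (monotonicity or log-convexity of $M_p(r,f)$, subharmonicity). That is a new idea which neither your proposal nor the paper supplies.

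The surrounding bookkeeping does not close either.
\begin{itemize}
\item For $\alpha=0$ one has $\|f\|_{A^p}^p=\int_0^1\varphi(r)\,r\,\dd r$, not $\int_0^1\varphi(r)(1-r)\,\dd r$.
\item The linearized kernel the computation actually produces is $r\,H(\sqrt r)$, where $H(x)=x^{2p-8}\int_0^x\psi_{b,p,0}(t)\,\dd t-\int_0^1\psi_{b,p,0}(t)\,\dd t$. This is Lemma \ref{lem308-6} at $\alpha=0$.
\item The kernel $r^{p-4}(1-r)$ in the hypothesis and the exponents $2p-11$, $2p-12$ in Lemma \ref{l28} are the $\alpha=1$ ones, since $2p-3\alpha-8=2p-11$ only when $\alpha=1$. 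So the mismatch is structural, not a single misprint $r^{p-2/p}$.
\item You therefore cannot choose exponents to make the kernel match Lemma \ref{l28}. The kernel is dictated by the bound on $\|T_sf\|_{A^p}$, which is precisely the missing piece.
\item The passage from $\int\psi(I-J)\le 0$ to $\int\psi(I^{1/p}-J^{1/p})\le 0$ is justified by concavity, $x^{1/p}-y^{1/p}\le\frac1p y^{1/p-1}(x-y)$, not by \eqref{eqxy}.
\end{itemize}
As written, the proposal reduces the statement to an unproved inequality whose per-circle form is false.
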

\vskip 8mm

\section{$\|\mathcal{H}_b\|_{\apa\rightarrow \apa}$ for $-1<\alpha<0$}
\vskip 2mm

In this section, we will further   characterize    the norm of the generalized Hilbert operator $\mathcal{H}_b$ for the case where $-1<\alpha<0$. Using Lemma \ref{lem66},  the range of $\alpha$ in Theorem \ref{thm3} can be extend to $-1<\alpha<0$.

\begin{theorem}\label{thm7}
  Let $-1<\alpha<0$, $b\geq 0$, $1< p< \infty$ such that $ \alpha+2<p(b+1)  $. Then the following statements hold.
  \begin{enumerate}
		\item[\bf(i)] If $p\geq2(\alpha+2)$, then
$$
\|\mathcal{H}_b\|_{\apa\rightarrow\apa}\leq B\Big(\frac{\alpha+2}{p},~b+1-\frac{\alpha+2}{p}\Big).
$$
		\item[\bf(ii)] If $2\alpha+3\leq p<2(\alpha+2)$, then
$$
\|\mathcal{H}_b\|_{\apa\rightarrow\apa}\leq B\Big(\frac{\alpha+2}{p},~b+1-\frac{\alpha+2}{p}\Big)+2^{\frac{1}{p}}B\Big(\frac{\alpha+1}{p},~b+1-\frac{\alpha+1}{p}\Big).
$$
        \item[\bf(iii)] If $\alpha+2<p<2\alpha+3$, then
$$
\|\mathcal{H}_b\|_{\apa\rightarrow\apa}\leq B\Big(\frac{\alpha+2}{p},~b+1-\frac{\alpha+2}{p}\Big)+2^{\frac{2(\alpha+2)}{p}-1}B\Big(1-\frac{\alpha+2}{p},~b+\frac{\alpha+2}{p}\Big).
$$

 \item[\bf(iv)] If $b>0, \frac{\alpha+2}{b+1}<p\leq \alpha+2$, then
{\small \begin{align*}
&\|\mathcal{H}_b\|_{\apa\rightarrow\apa}\\[0.1cm]
\leq &\sum_{n=0}^{{\lceil b\rceil}-1}\left[ (\alpha+1)B\Big( \frac{np}{2}+1,~\alpha+1 \Big)  \right ]^{-\frac{1}{p}} \int_{0}^{1}s^{n}(1-s)^{b}\left [ F\Big ( \frac{p(n+1)}{2},\frac{p(n+1)}{2},\alpha+2;(1-s)^{2} \Big)  \right ]^{\frac{1}{p}}\dd s\nonumber \\[0.1cm]
&+B\Big( \frac{\alpha+2}{p},~b+1-\frac{\alpha+2}{p}\Big) \left \{ \frac{(\alpha+1)2^{4\alpha+11-2({\lceil b\rceil}+1)p}}{9[({\lceil b\rceil}+1)p-2\alpha-2]}+ 2^{4\alpha+8-2({\lceil b\rceil}+1)p}\right \}^{\frac{1}{p}}\left \| S^{*} \right \|_{\apa\rightarrow\apa}^{\lceil b\rceil}\nonumber \\[0.1cm]
&+B\Big( {\lceil b\rceil}+1-\frac{\alpha+2}{p},~b+\frac{\alpha+2}{p}-{\lceil b\rceil} \Big)\left \{ \frac{(\alpha+1)2^{6\alpha+15-3({\lceil b\rceil}+1)p}}{9[({\lceil b\rceil}+1)-2\alpha-2]} +2^{6\alpha+12-3({\lceil b\rceil}+1)p}\right \}^{\frac{1}{p}}\left \| S^{*} \right \|_{\apa\rightarrow\apa}^{\lceil b\rceil}
\end{align*}  }
\noindent for $p<\frac{2\alpha+4}{{\lceil b\rceil}+1}$ and
\begin{align*}
&\|\mathcal{H}_b\|_{\apa\rightarrow\apa}\\[0.1cm]
\leq &  \sum_{n=0}^{{\lceil b\rceil}-1}\left [ (\alpha+1)B\Big( \frac{np}{2}+1,~\alpha+1 \Big)  \right ]^{-\frac{1}{p}}\int_{0}^{1}s^{n}(1-s)^{b}\left [ F\Big ( \frac{p(n+1)}{2},\frac{p(n+1)}{2}, \alpha+2;(1-s)^{2} \Big )  \right ]^{\frac{1}{p}}\dd s\nonumber \\[0.1cm]
&+B\Big( \frac{\alpha+2}{p},~b+1-\frac{\alpha+2}{p} \Big) \left \| S^{*} \right \|_{\apa\rightarrow\apa}^{\lceil b\rceil}
\end{align*}
\noindent for $p\geq \frac{2\alpha+4}{{\lceil b\rceil}+1}$.
\end{enumerate}

\end{theorem}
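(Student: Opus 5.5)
The plan is to reuse the skeleton of the proof of Theorem \ref{thm3}: the representation $\mathcal{H}_b(f)=\int_0^1T_s(f)\,\dd s$, Minkowski's inequality \eqref{eq1}, and the change of variables \eqref{eq6006}. The new point is the weight. By \eqref{eq919},
$$(1-|\phi_s^{-1}(z)|^2)^\alpha=\Big(\frac{s}{1-s}\Big)^\alpha|z|^{-2\alpha}\rho_s^{-\alpha}\big(\rho_s^2-|z-c_s|^2\big)^\alpha,$$
and since $\alpha<0$ the upper bound \eqref{eq305-1} now goes the wrong way. So I will not bound $(\rho_s^2-|z-c_s|^2)^\alpha$ pointwise.

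Instead I write $D_s=\chi_s(\D)$ with the affine map $\chi_s(u)=c_s+\rho_s u$. Then $\rho_s^2-|z-c_s|^2=\rho_s^2(1-|u|^2)$ for $z=\chi_s(u)$, and
$$\int_{D_s}|f(z)|^p(\rho_s^2-|z-c_s|^2)^\alpha\,\daz=\rho_s^{2\alpha+2}\int_\D|f(\chi_s(u))|^p(1-|u|^2)^\alpha\,\dd A(u).$$
The last integral is $(\alpha+1)^{-1}\|C_{\chi_s}f\|^p_{\apa}$. Since $\rho_s+c_s=1$, we have $1+\rho_s^2-c_s^2=2\rho_s$ and the square root in Lemma \ref{lem66} vanishes, so $\|C_{\chi_s}\|^p=\rho_s^{-(\alpha+2)}$. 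All powers of $\rho_s$ then cancel ($\rho_s^{-\alpha}\rho_s^{2\alpha+2}\rho_s^{-\alpha-2}=1$), which gives the key estimate
$$\|T_s(f)\|^p_{\apa}\le\frac{1}{s^{p-2-\alpha}(1-s)^{2-pb+\alpha}}\sup_{z\in D_s}|z|^{p-4-2\alpha}\,\|f\|^p_{\apa}.$$
This is exactly \eqref{eq2} with $(1-|z|^2)^\alpha$ replaced by the composition-operator bound. A more careful version keeps the factor $|z|^{p-4-2\alpha}=|\chi_s(u)|^{p-4-2\alpha}$ inside the integral.

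\textbf{Case (i).} Here $p\ge 2(\alpha+2)$, so $|z|^{p-4-2\alpha}\le1$. This gives $\|T_s f\|\le\psi_{b,p,\alpha}(s)\|f\|$, and integrating in $s$ yields (i).

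\textbf{Case (iii).} I use $|z|\ge s/(2-s)$ on $D_s$ and repeat \eqref{eq5}--\eqref{eq7} verbatim. This is valid because $\alpha+2<p<2\alpha+3$ forces $0<\frac{2(\alpha+2)}{p}-1<1$, so \eqref{eqxy} applies.

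\textbf{Case (ii).} Here $-1\le p-4-2\alpha<0$. I plan to split $D_s$ into the part where $|z|$ is close to $1$, where the power is harmless and contributes the $\psi_{b,p,\alpha}$ term, and the part near the inner point $s/(2-s)$. On the inner part I would bound $|z|^{p-4-2\alpha}\le|z|^{-1}$ and try to absorb $|z|^{-1}$ by trading one power of the weight, $|z|^{-1}\lesssim s^{-1}$-type factors combined with $|f\circ\chi_s|$. The target is an extra factor $2^{1/p}s^{\frac{\alpha+1}{p}-1}(1-s)^{b-\frac{\alpha+1}{p}}$, whose $s$-integral is $B\big(\frac{\alpha+1}{p},b+1-\frac{\alpha+1}{p}\big)$. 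Getting exactly this shape is the step I expect to be the main obstacle. I would first try the elementary inequality $|z|^{-1}\le 1+\frac{1-|z|}{|z|}$, together with a second application of Lemma \ref{lem66} to an affine map with weight $\alpha+1$ in place of $\alpha$.

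\textbf{Case (iv).} I copy the proof of Theorem \ref{thm3}(v). Decompose $f=\sum_{n<m}\frac{f^{(n)}(0)}{n!}z^n+z^mS^{*m}f$ with $m=\lceil b\rceil$. The polynomial terms are treated exactly as in \eqref{eqtso}, via Lemmas \ref{l223} and \ref{apinq}, and this part does not use the sign of $\alpha$. For the remainder $\|T_s(z^mS^{*m}f)\|$, I use the affine-composition estimate above with the weight $|z|^{(m+1)p-4-2\alpha}$ kept inside. If $p\ge\frac{2\alpha+4}{m+1}$, this exponent is nonnegative, and I get directly $\psi_{b,p,\alpha}(s)\|S^*\|^m\|f\|$. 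Otherwise I split $\{|\chi_s(u)|\le\frac12\}$ from its complement, mirroring \eqref{eqi1}--\eqref{eqi2}. On the small disc I use the point estimate $|g(w)|^p\le(1-|w|^2)^{-(\alpha+2)}\|g\|^p$. On the complement I use $|z|\ge s/(2-s)$, which, after splitting $2-s=s+2(1-s)$ as in (iii), produces both the $B\big(\frac{\alpha+2}{p},\cdot\big)$ and the $B\big(m+1-\frac{\alpha+2}{p},\,b+\frac{\alpha+2}{p}-m\big)$ terms with the stated powers of $2$.
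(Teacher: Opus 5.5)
Your treatment of (i), (iii), the polynomial part of (iv), and the subcase $p\ge\frac{2\alpha+4}{\lceil b\rceil+1}$ of (iv) is exactly the paper's argument. You pull back $D_s$ by $\varphi_s(w)=\rho_s w+c_s$ and use Lemma~\ref{lem66} with $\rho_s+c_s=1$, so that all powers of $\rho_s$ cancel.

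\textbf{Case (ii): a genuine gap.} You leave Case (ii) unproved. The route you sketch for it (splitting $D_s$, then a second use of Lemma~\ref{lem66} with weight $\alpha+1$) is a red herring, because nothing in the argument changes the weight. Your own supremum estimate already suffices. For $2\alpha+3\le p<2(\alpha+2)$ the exponent $p-4-2\alpha$ lies in $[-1,0)$, and $|z|\ge s/(2-s)$ on $D_s$. Hence
\[
|z|^{p-4-2\alpha}\le |z|^{-1}=1+\frac{1-|z|}{|z|}\le 1+\frac{2(1-s)}{s}.
\]
This gives $\|T_s f\|^p_{\apa}\le\big[s^{\alpha+2-p}(1-s)^{pb-\alpha-2}+2\,s^{\alpha+1-p}(1-s)^{pb-\alpha-1}\big]\|f\|^p_{\apa}$. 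Using $(x+y)^{1/p}\le x^{1/p}+y^{1/p}$, you get $\psi_{b,p,\alpha}(s)+2^{1/p}s^{\frac{\alpha+1}{p}-1}(1-s)^{b-\frac{\alpha+1}{p}}$, and integrating in $s$ gives (ii). The $\frac{\alpha+1}{p}$ comes from the extra factor $(1-s)/s$, not from a new weight. This is precisely how the paper proves (ii).

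\textbf{Case (iv), subcase $(m+1)p<2\alpha+4$, where $m=\lceil b\rceil$.} Your claim that the image-side split $\{|\chi_s(u)|\le\frac12\}$ ``produces the stated powers of $2$'' is not justified. The paper's constant $\frac{(\alpha+1)2^{2\alpha+7-(m+1)p}}{9[(m+1)p-2\alpha-2]}$ comes from $\int_{|u|\le 1/2}|u|^{(m+1)p-2\alpha-4}\,\dd A(u)$ in the \emph{preimage} variable. It appears only because the paper uses the weaker bound $|\varphi_s(u)|\ge \frac{s|u|}{2-s}$ on all of $\D$. The paper then splits at $|u|=\frac12$, using point evaluation inside and $|u|^{(m+1)p-2\alpha-4}\le 2^{2\alpha+4-(m+1)p}$ outside. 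A split on the image side yields different constants.

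No split is needed, however. Set $\gamma=\frac{2\alpha+4}{p}-(m+1)>0$ and use $|z|\ge s/(2-s)$ on all of $D_s$ in your supremum estimate. This gives
\[
\|T_s(z^mS^{*m}f)\|_{\apa}\le s^{m-\frac{\alpha+2}{p}}(1-s)^{b-\frac{\alpha+2}{p}}(2-s)^{\gamma}\|S^*\|^m_{\apa\rightarrow\apa}\|f\|_{\apa}.
\]
Then $(2-s)^\gamma\le 2^\gamma s^\gamma+2^{2\gamma}(1-s)^\gamma$ produces exactly the two stated Beta terms, with coefficients $2^\gamma$ and $2^{2\gamma}$. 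Each brace in the statement equals $2^\gamma$ (respectively $2^{2\gamma}$) times $\big\{\frac{(\alpha+1)2^{2\alpha+7-(m+1)p}}{9[\cdots]}+2^{\gamma p}\big\}^{1/p}$. That factor exceeds $1$, because its first summand is positive and $2^{\gamma p}>1$. So the stated bound follows a fortiori, and this is how you should close that step.
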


\begin{proof} Set
  \begin{align*}
  \varphi_s(z)=\rho_s z+c_s, \,\,z\in\D, \,\,\,0<s<1.
 \end{align*}
Here $\rho_s$ and $c_s$ are defined as \eqref{eq126}. We have (see \cite{ka})
\begin{align}\label{eq2020}
\varphi_s(\D)=\phi_s(\D)=D\Big(\frac{1}{2-s},~~~~\,\frac{1-s}{2-s}\Big)\subset\D.
\end{align}
    By (\ref{eq6006}) and (\ref{eq919}), we get
  \begin{align}\label{eq-4.3}
    \|T_s(f)\|^p_{\apa}
    &=\frac{\alpha+1}{s^{p-2}(1-s)^{2-pb}}\int_{\phi_s(\D)}|z|^{p-4}|f(z)|^p(1-|\phi_s^{-1}(z)|^2)^\alpha\daz\nonumber\\[0.1cm]
    &=\frac{\alpha+1}{s^{p-2}(1-s)^{2-pb}}\int_{\phi_s(\D)}|z|^{p-4}|f(z)|^p\Big(\frac{s}{1-s}\frac{1}{|z|^2}    \frac{\rho_s^2-|z-c_s|^2}{\rho_s}\Big)^\alpha\daz\nonumber\\[0.1cm]
    &=\frac{\alpha+1}{s^{p-2-\alpha}(1-s)^{2-pb+\alpha}}\int_{\phi_s(\D)}|z|^{p-4-2\alpha}|f(z)|^p    \Big(\frac{\rho_s^2-|z-c_s|^2}{\rho_s}\Big)^\alpha\daz.
  \end{align}

Next, the proof is divided into four mutually independent subcases.

 {\bf Case~(i)~ $p\geq2(\alpha+2)$.}

  From   \eqref{eq-4.3}, one readily deduces that

  \begin{align*}
    \|T_s(f)\|^p_{\apa}
  %  &=\frac{\alpha+1}{s^{p-2-\alpha}(1-s)^{2-pb+\alpha}}\int_{\phi_s(\D)}|z|^{p-4-2\alpha}|f(z)|^p    \left(\frac{\rho_s^2-|z-c_s|^2}{\rho_s}\right)^\alpha\daz\nonumber\\[0.1cm]
    &\leq\frac{\alpha+1}{s^{p-2-\alpha}(1-s)^{2-pb+\alpha}}\int_{\phi_s(\D)}
    |f(z)|^p\Big(\frac{\rho_s^2-|z-c_s|^2}{\rho_s}\Big)^\alpha\daz.
  \end{align*}
  Let $$z=\varphi_s(w)=\rho_s w+c_s,\,0<s<1.$$
   Then using (\ref{eq2020}) and Lemma \ref{lem66}  with $|\rho_s|+|c_s|=1$,  we get
    \begin{align*}
    \|T_s(f)\|^p_{\apa}&\leq\frac{\alpha+1}{s^{p-2-\alpha}(1-s)^{2-pb+\alpha}}\int_{ \phi_s(\D) }|f(z)|^p\Big(\frac{\rho_s^2-|z-c_s|^2}{\rho_s}\Big)^\alpha\daz\nonumber\\[0.1cm]
   % &=\frac{\alpha+1}{s^{p-2-\alpha}(1-s)^{2-pb+\alpha}}\rho_s^{\alpha}\int_{\D}|f(\varphi_s(w))|^p (1-|w|^2 )^\alpha |\varphi'_s(w)|^2\dd A(w)\nonumber\\[0.1cm]
    &=\frac{\alpha+1}{s^{p-2-\alpha}(1-s)^{2-pb+\alpha}}\rho_s^{\alpha+2}\int_{\D}|f(\varphi_s(w))|^p (1-|w|^2 )^\alpha \dd A(w)\nonumber\\[0.1cm]
    &= \frac{\rho_s^{\alpha+2}}{s^{p-2-\alpha}(1-s)^{2-pb+\alpha}} \|C_{\varphi _{s}}(f)   \|_{A_{\alpha}^{p}}^p\nonumber\\[0.1cm]
     &\leq s^{\alpha+2-p}(1-s)^{pb-\alpha-2}  \|f  \|_{A_{\alpha}^{p}}^p.
  \end{align*}
  Hence,
   \begin{align*}
    \|T_s(f)\|_{\apa}&\leq s^{\frac{\alpha+2}{p}-1}(1-s)^{b-\frac{\alpha+2}{p}}\|f\|_{\apa},
  \end{align*}
which together with (\ref{eq1}) implies that
  \begin{align*}
    \|\mathcal{H}_b(f)\|_{\apa}&\leq\int_0^1\|T_s(f)\|_{\apa}\dd s\leq \int_0^1s^{\frac{\alpha+2}{p}-1}(1-s)^{b-\frac{\alpha+2}{p}}\dd s\|f\|_{\apa}\nonumber\\[0.1cm]
    &=B\Big(\frac{\alpha+2}{p},~b+1-\frac{\alpha+2}{p}\Big)\|f\|_{\apa}.
  \end{align*}

  {\bf Case~(ii)~$2\alpha+3\le p<2(\alpha+2)$.}

    In this case, we have $\left | z \right |^{p-4-2\alpha}\le \left | z \right |^{-1}$ and $|z| \geq\frac{s}{2-s}$ for $z\in \phi_{s}(\D)$. Then by $\eqref{eq-4.3}$, we get
  \begin{align*}
    \|T_s(f)\|^p_{\apa}&=\frac{\alpha+1}{s^{p-2-\alpha}(1-s)^{2-pb+\alpha}}\int_{\phi_s(\D)}|z|^{p-4-2\alpha}|f(z)|^p
    \Big(\frac{\rho_s^2-|z-c_s|^2}{\rho_s}\Big)^\alpha\daz\nonumber\\[0.1cm]
   % &\leq\frac{\alpha+1}{s^{p-2-\alpha}(1-s)^{2-pb+\alpha}}\int_{\phi_s(\D)}|z|^{-1}|f(z)|^p    \Big(\frac{\rho_s^2-|z-c_s|^2}{\rho_s}\Big)^\alpha\daz\nonumber\\[0.1cm]
    &\leq\frac{\alpha+1}{s^{p-2-\alpha}(1-s)^{2-pb+\alpha}}\Big ( \frac{2-s}{s} \Big)\int_{\phi_s(\D)}|f(z)|^p\Big(\frac{\rho_s^2-|z-c_s|^2}{\rho_s}\Big)^\alpha\daz\nonumber\\[0.1cm]
  %  &=\frac{\alpha+1}{s^{p-2-\alpha}(1-s)^{2-pb+\alpha}}\left(1+\frac{2(1-s)}{s}\right)\int_{\phi_s(\D)}|f(z)|^p\Big(\frac{\rho_s^2-|z-c_s|^2}{\rho_s}\Big)^\alpha\daz\nonumber\\[0.1cm]
    &=\frac{\alpha+1}{s^{p-2-\alpha}(1-s)^{2-pb+\alpha}}\left [1+\frac{2(1-s)}{s}\right]\rho_{s}^{\alpha+2}\int_{\D}|f(\varphi_{s}(w))|^p(1- | w   |^{2})^\alpha\dd A(w)\nonumber\\[0.1cm]
    &=\left [\frac{1}{s^{p-2-\alpha}(1-s)^{2-pb+\alpha}}+\frac{2}{s^{p-1-\alpha}(1-s)^{1-pb+\alpha}}  \right]\rho_{s}^{\alpha+2}\left \|C_{\varphi _{s}}(f) \right \|_{A_{\alpha}^{p}}^p\nonumber\\[0.1cm]
    &\leq\left[\frac{1}{s^{p-2-\alpha}(1-s)^{2-pb+\alpha}}+\frac{2}{s^{p-1-\alpha}(1-s)^{1-pb+\alpha}}  \right]  \|f   \|^p_{A_{\alpha}^{p}}.
  \end{align*}
 Hence,
    \begin{align}\label{eqta2}
    \|T_s(f)\|_{\apa}&\leq \left [s^{\frac{\alpha+2}{p}-1}(1-s)^{b-\frac{\alpha+2}{p}}+2^\frac{1}{p}s^{\frac{\alpha+1}{p}-1}(1-s)^{b-\frac{\alpha+1}{p}}\right ]\|f\|_{\apa}.
  \end{align}
 It follows from (\ref{eq1}) and (\ref{eqta2}) that
     \begin{align*}
     \|\mathcal{H}_b(f)\|_{\apa}&\leq\int_0^1
    \left [s^{\frac{\alpha+2}{p}-1}(1-s)^{b-\frac{\alpha+2}{p}}+2^\frac{1}{p}s^{\frac{\alpha+1}{p}-1}(1-s)^{b-\frac{\alpha+1}{p}}\right ]\dd s\|f\|_{\apa}\nonumber\\[0.1cm]
     &=\left [ B\Big(\frac{\alpha+2}{p},~b+1-\frac{\alpha+2}{p}\Big)+2^\frac{1}{p}B\Big(\frac{\alpha+1}{p},~b+1-\frac{\alpha+1}{p}\Big) \right ] \|f\|_{\apa}.
  \end{align*}

   {\bf Case~(iii)~$\alpha+2< p<2\alpha+3$.}

    Let $$z=\varphi_s(w)=\rho_s w+c_s,\,0<s<1.$$
    From \eqref{eq-4.3}, since $\left | z \right |\ge \frac{s}{2-s}  $ for $z\in \phi_{s}(\D)$, we obtain
 \begin{align*}
    \|T_s(f)\|^p_{\apa}&=\frac{\alpha+1}{s^{p-2-\alpha}(1-s)^{2-pb+\alpha}}\int_{\phi_s(\D)}|z|^{p-4-2\alpha}|f(z)|^p
    \Big(\frac{\rho_s^2-|z-c_s|^2}{\rho_s}\Big)^\alpha\daz\nonumber\\[0.1cm]
    &\leq\frac{\alpha+1}{s^{p-2-\alpha}(1-s)^{2-pb+\alpha}}\left ( \frac{2-s}{s} \right )^{2\alpha+4-p}\int_{\phi_s(\D)}|f(z)|^p\Big(\frac{\rho_s^2-|z-c_s|^2}{\rho_s}\Big)^\alpha\daz\nonumber\\[0.1cm]
   % &=\frac{\alpha+1}{s^{\alpha+2}(1-s)^{2-pb+\alpha}}(2-s)^{2\alpha+4-p}\int_{\phi_s(\D)}|f(z)|^p\Big(\frac{\rho_s^2-|z-c_s|^2}{\rho_s}\Big)^\alpha\daz\nonumber\\[0.1cm]
    &=\frac{\alpha+1}{s^{\alpha+2}(1-s)^{2-pb+\alpha}}(2-s)^{2\alpha+4-p}\rho_{s}^{\alpha+2}\int_{\D}|f(\varphi_{s}(w))|^p(1- | w   |^{2})^\alpha \dd A(w)\nonumber\\[0.1cm]
    &=\frac{(2-s)^{2\alpha+4-p}}{s^{\alpha+2}(1-s)^{2-pb+\alpha}}\rho_{s}^{\alpha+2} \|C_{\varphi _{s}}(f)  \|_{A_{\alpha}^{p}}^p\nonumber\\[0.1cm]
    &\leq\frac{(2-s)^{2\alpha+4-p}}{s^{\alpha+2}(1-s)^{2-pb+\alpha}} \|f  \|^p_{A_{\alpha}^{p}},
  \end{align*}
which implies that
\begin{align*}
    \|T_s(f)\|_{\apa}&\leq \frac{(2-s)^{\frac{2(\alpha+2)}{p}-1}}{s^{\frac{\alpha+2}{p}}(1-s)^{\frac{\alpha+2}{p}-b}}\left \|f \right \|_{A_{\alpha}^{p}}.
  \end{align*}
  Notice that the condition $\alpha+2< p< 2\alpha+3$ implies $0<\frac{2(\alpha+2)}{p}-1<1$. Combining this inequality with  \eqref{eq7}, we arrive at the estimate
\begin{align*}
    \|T_s(f)\|_{\apa}&\leq \left[s^{\frac{\alpha+2}{p}-1}(1-s)^{b-\frac{\alpha+2}{p}}+2^{\frac{2(\alpha+2)}{p}-1}s^{-\frac{\alpha+2}{p}}(1-s)^{b+\frac{\alpha+2}{p}-1}\right]\|f\|_{\apa}.
\end{align*}
   Hence,
 \begin{align*}
     \|\mathcal{H}_b(f)\|_{\apa}&\leq\left [ B\Big(  \frac{\alpha+2}{p},~b+1-\frac{\alpha+2}{p}  \Big)+2^{\frac{2(\alpha+2)}{p}-1} B\Big( 1- \frac{\alpha+2}{p},~b+\frac{\alpha+2}{p} \Big )\right ]\|f\|_{\apa}.
  \end{align*}

  {\bf Case~(iv)~$b>0, \frac{\alpha+2}{b+1}< p\leq \alpha+2$.}

   For a fixed real number $b$, denote $m=\lceil b \rceil$ to simplify subsequent derivations. As in the case (v) of Theorem 3.1, for any $z\in \mathbb{D}$, we have
\begin{equation*}
f(z)=\sum_{n=0}^{m-1}\frac{f^{(n)}(0)}{n!}z^{n}+z^{m} S^{*m}(f)(z).
\end{equation*}
%where $S^{*m}$ stands for the $m$-fold iterate of the operator $S^{*}$.
% Taking the $\apa$-norm of $T_s(f)$ on both sides yields
% \begin{align*}
%\left \| T_s(f) \right \|_{\apa}\leq \sum_{n=0}^{m-1}\frac{1}{n!}\left \| T_s(z^{n}f^{(n)}(0)) \right \|_{\apa}+\left \| T_s(z^{m}\cdot S^{*m}(f)) \right \|_{\apa}.
%\end{align*}
By (\ref{eqtsm}) and making the change of variable of $ w=\varphi_s(z)=\rho_s z+c_s$, we obtain
\begin{align}
 &\left \| T_s(z^{m} S^{*m}(f)) \right \|_{\apa}^p\nonumber \\[0.1cm]
 %= & (\alpha+1)\int_{\mathbb{D}}\left | w_s(z) \right |^p \left |\phi_s(z) \right |^{mp}\left |S^{*m}(f)(\phi_s(z)) \right |^p( 1-\left |z \right |^2)^p\dd A(z)\nonumber \\[0.1cm]
= & \frac{\alpha+1}{s^{p-2}(1-s)^{2-bp}}\int_{\phi_s(\mathbb{D})}\left | w\right |^{(m+1)p-4}\left |S^{*m}(f)(w) \right |^p\left(1-\left| \phi_{s}^{-1}(w) \right|^{2} \right)^{\alpha}\dd A(w)\nonumber \\[0.1cm]
= & \frac{\alpha+1}{s^{p-2}(1-s)^{2-bp}}\int_{\phi_s(\mathbb{D})}\left | w\right |^{(m+1)p-4}\left |S^{*m}(f)(w) \right |^p\left (\frac{s}{1-s}\frac{1}{\left | w \right |^2 } \frac{\rho_s^2-|w-c_s|^2}{\rho_s} \right )^{\alpha}\dd A(w)\nonumber \\[0.1cm]
= & \frac{\alpha+1}{s^{p-\alpha-2}(1-s)^{\alpha+2-bp}}\int_{\phi_s(\mathbb{D})}\left | w\right |^{(m+1)p-2\alpha-4}\left |S^{*m}(f)(w) \right |^p\left( \frac{\rho_s^2-|w-c_s|^2}{\rho_s}\right)^{\alpha}\dd A(w)\nonumber \\[0.1cm]
= & \frac{\alpha+1}{s^{p-\alpha-2}(1-s)^{\alpha+2-bp}}\rho_s^{\alpha+2}\int_{\mathbb{D}}\left | \varphi_s(z)  \right |^{(m+1)p-2\alpha-4}\left | S^{*m}\left ( f\left ( \varphi_s(z) \right )  \right )  \right |^p(1-\left | z \right |^2 )^{\alpha}\dd A(z)\label{eqtsp}
%= & \frac{(\alpha+1)\rho_s^{\alpha+2}}{s^{p-\alpha-2}(1-s)^{\alpha+2-bp}}\int_{\mathbb{D}}\left | \frac{1+(1-s)z}{2-s} \right |^{(m+1)p-2\alpha-4}\left | S^{*m}\left ( f\left ( \varphi_s(z) \right )  \right )   \right |^p(1-\left | z \right |^2 )^{\alpha}\dd A(z).  \nonumber \\[0.1cm]
\end{align}

Next, we split the rest proof into two parts.

  First we consider the case $(m+1)p-2\alpha-4<0$.  Since
   $$\left | \varphi_s(z) \right |=\left | \frac{1+(1-s)z}{2-s} \right |\ge \frac{1-(1-s)\left | z \right | }{2-s}\ge \frac{s\left | z \right | }{2-s},$$
 we have
\begin{align}
&  \| T_s(z^{m}S^{*m}(f))  \|_{\apa}^p\nonumber \\[0.1cm]
\leq &\frac{(\alpha+1)(2-s)^{2\alpha+4-(m+1)p}\rho_s^{\alpha+2}}{s^{\alpha+2-mp}(1-s)^{\alpha+2-bp}}\int_{\mathbb{D}}\left | z \right |^{(m+1)p-2\alpha-4}\left | S^{*m}\left ( f\left ( \varphi_s(z) \right )  \right )  \right |^p(1-\left | z \right |^2 )^{\alpha}\dd A(z)\nonumber \\[0.1cm]
= &\frac{(\alpha+1)(2-s)^{2\alpha+4-(m+1)p}\rho_s^{\alpha+2}}{s^{\alpha+2-mp}(1-s)^{\alpha+2-bp}}  \Big[\int_{\left|z \right| \leq \frac{1}{2}}\left | z \right |^{(m+1)p-2\alpha-4}\left | S^{*m}\left ( f\left ( \varphi_s(z) \right )  \right )  \right |^p(1-\left | z \right |^2 )^{\alpha}\dd A(z)\nonumber \\[0.1cm]
&+  \int_{\frac{1}{2}< \left|z \right| <1}\left | z \right |^{(m+1)p-2\alpha-4}\left | S^{*m}\left ( f\left ( \varphi_s(z) \right )  \right )  \right |^p(1-\left | z \right |^2 )^{\alpha}\dd A(z)\Big]\nonumber \\[0.1cm]
:=  &\frac{(\alpha+1)(2-s)^{2\alpha+4-(m+1)p}\rho_s^{\alpha+2}}{s^{\alpha+2-mp}(1-s)^{\alpha+2-bp}}\left(I_3+I_4 \right).  \label{eqtsmz}
\end{align}
Since the operator $S^{*}$ is bounded on the weighted Bergman space and $p>\frac{\alpha+2}{b+1}\geq\frac{2+2\alpha}{m+1}$, we have
\begin{align}
I_3 &=\int_{\left|z \right| \leq \frac{1}{2}}\left | z \right |^{(m+1)p-2\alpha-4}\left | S^{*m}\left ( f\left ( \varphi_s(z) \right )  \right )  \right |^p(1-\left | z \right |^2 )^{\alpha}\dd A(z)\nonumber \\[0.1cm]
&\leq \int_{\left|z \right| \leq \frac{1}{2}}\left | z \right |^{(m+1)p-2\alpha-4}(1-\left | z \right |^2 )^{-2}\dd A(z)\left \| S^{*m}\left ( f\left ( \varphi_s \right )  \right )  \right \|_{\apa}^p\nonumber \\[0.1cm]
&\leq \frac{16}{9}\int_{\left|z \right| \leq \frac{1}{2}}\left | z \right |^{(m+1)p-2\alpha-4}\dd A(z)\left \| S^{*m}\left ( f\left ( \varphi_s \right )  \right )  \right \|_{\apa}^p\nonumber \\[0.1cm]
&\leq\frac{2^{2\alpha+7-(m+1)p}}{9[(m+1)p-2\alpha-2]}\left \| S^{*} \right \|_{\apa\rightarrow\apa}^{mp}\left \|  C_{\varphi _{s}}(f)   \right \|_{\apa}^p\label{eqi3}
\end{align}
and
\begin{align}
I_4&=\int_{\frac{1}{2}< \left|z \right| <1}\left | z \right |^{(m+1)p-2\alpha-4}\left | S^{*m}\left ( f\left ( \varphi_s(z) \right )  \right )  \right |^p(1-\left | z \right |^2 )^{\alpha}\dd A(z)\nonumber \\[0.1cm]
&\leq 2^{2\alpha+4-(m+1)p}\int_{\mathbb{D}}\left | S^{*m}\left ( f\left ( \varphi_s(z) \right )  \right )  \right |^p(1-\left | z \right |^2 )^{\alpha}\dd A(z)\nonumber \\[0.1cm]
%&=\frac{2^{2\alpha+4-(m+1)p}}{\alpha+1}\left \| S^{*m}\left ( f\left ( \varphi_s \right )  \right )  \right \|_{\apa}^p\nonumber \\[0.1cm]
&\leq \frac{2^{2\alpha+4-(m+1)p}}{\alpha+1}\left \| S^{*} \right \|_{\apa\rightarrow\apa}^{mp}\left \|  C_{\varphi _{s}}(f)    \right \|_{\apa}^p.\label{eqi4}
\end{align}
Substituting (\ref{eqi3}) with (\ref{eqi4}) into  (\ref{eqtsmz}) and then applying Lemma 4.1 with $|\rho_s|+|c_s|=1$, we obtain
\begin{align}
& \left \| T_s(z^{m} S^{*m}(f)) \right \|_{\apa}\nonumber \\[0.1cm]
 \leq &  s^{m-\frac{\alpha+2}{p}}(1-s)^{b-\frac{\alpha+2}{p}}(2-s)^{\frac{2\alpha+4}{p}-(m+1)}\rho_s^{\frac{\alpha+2}{p}}\nonumber \\[0.1cm]
&\times \left\{\frac{(\alpha+1)2^{2\alpha+7-(m+1)p}}{9[(m+1)p-2\alpha-2]}+2^{2\alpha+4-(m+1)p} \right\}^{\frac{1}{p}}\left \| S^{*} \right \|_{\apa\rightarrow\apa}^m \left \|  C_{\varphi _{s}}(f) \right \|_{\apa}\nonumber \\[0.1cm]
\leq &  s^{m-\frac{\alpha+2}{p}}(1-s)^{b-\frac{\alpha+2}{p}}(2-s)^{\frac{2\alpha+4}{p}-(m+1)} \nonumber \\[0.1cm]
&\times\left\{\frac{(\alpha+1)2^{2\alpha+7-(m+1)p}}{9[(m+1)p-2\alpha-2]}+2^{2\alpha+4-(m+1)p} \right\}^{\frac{1}{p}}\left \| S^{*} \right \|_{\apa\rightarrow\apa}^m \left \|  f \right \|_{\apa}\nonumber \\[0.1cm]
\leq  & s^{\frac{\alpha+2}{p}-1}(1-s)^{b-\frac{\alpha+2}{p}} \left\{\frac{(\alpha+1)2^{4\alpha+11-2(m+1)p}}{9[(m+1)p-2\alpha-2]}+2^{4\alpha+8-2(m+1)p} \right\}^{\frac{1}{p}}\left \| S^{*} \right \|_{\apa\rightarrow\apa}^m \left \|  f \right \|_{\apa}\nonumber \\[0.1cm]
&+s^{m-\frac{\alpha+2}{p}}(1-s)^{b+\frac{\alpha+2}{p}-m-1} \left\{\frac{(\alpha+1)2^{6\alpha+15-3(m+1)p}}{9[(m+1)p-2\alpha-2]}+2^{6\alpha+12-3(m+1)p} \right\}^{\frac{1}{p}}\left \| S^{*} \right \|_{\apa\rightarrow\apa}^m \left \|  f   \right \|_{\apa}. \label{eqtsap}
\end{align}
In the last inequality, since $\frac{2\alpha+4}{p}-(m+1)>0$, we used the following inequality:
\begin{align}
   (2-s)^{\frac{2\alpha+4}{p}-(m+1)} =& (s+2(1-s))^{\frac{2\alpha+4}{p}-(m+1)}  \nonumber \\
  \leq & 2^{\frac{2\alpha+4}{p}-(m+1)}s^{\frac{2\alpha+4}{p}-(m+1)}+2^{\frac{4\alpha+8}{p}-2(m+1)} (1-s)^{\frac{2\alpha+4}{p}-(m+1)}. \nonumber
\end{align}
Using  (\ref{eqts}),  (\ref{eqtso}) and  (\ref{eqtsap}), we get
{\small \begin{align*}
&\left \| T_s(f) \right \|_{\apa}\\[0.1cm]
 %\leq &\sum_{n=0}^{m-1}\frac{1}{n!}\left \| T_s(z^{n}f^{(n)}(0)) \right \|_{\apa}+\left \| T_s(z^{m}\cdot S^{*m}(f)) \right \|_{\apa}\nonumber \\[0.1cm]
\leq &\sum_{n=0}^{m-1}\frac{(1-s)^{b}s^{n} \left \| f \right \|_{\apa}}{\left[(\alpha+1)B\left(\frac{np}{2}+1,\alpha+1 \right)\right]^{\frac{1}{p}}}\left[F\left( \frac{p(n+1)}{2},\frac{p(n+1)}{2},\alpha+2;(1-s)^2 \right)\right]^{\frac{1}{p}}\nonumber \\[0.1cm]
&+s^{\frac{\alpha+2}{p}-1}(1-s)^{b-\frac{\alpha+2}{p}} \left\{\frac{(\alpha+1)2^{4\alpha+11-2(m+1)p}}{9[(m+1)p-2\alpha-2]}+2^{4\alpha+8-2(m+1)p} \right\}^{\frac{1}{p}}\left \| S^{*} \right \|_{\apa\rightarrow\apa}^m \left \|  f    \right \|_{\apa}\nonumber \\[0.1cm]
&+ s^{m-\frac{\alpha+2}{p}}(1-s)^{b+\frac{\alpha+2}{p}-m-1} \left\{\frac{(\alpha+1)2^{6\alpha+15-3(m+1)p}}{9[(m+1)p-2\alpha-2]}+2^{6\alpha+12-3(m+1)p} \right\}^{\frac{1}{p}}\left \| S^{*} \right \|_{\apa\rightarrow\apa}^m \left \|  f   \right \|_{\apa}.
\end{align*} }
Combining the last inequality with  (\ref{eq1}), we can immediately get the desired result. \\

%{\small \begin{align*}
%& \|\mathcal{H}_b(f)\|_{\apa}\\[0.1cm]
% \leq & \sum_{n=0}^{m-1}\left [ (\alpha+1)B\left ( \frac{np}{2}+1,\alpha+1 \right )  \right ]^{-\frac{1}{p}}\times \nonumber \\[0.1cm]
%& \int_{0}^{1}s^{n}(1-s)^{b}\left [ F \left ( \frac{p(n+1)}{2},\frac{p(n+1)}{2},\alpha+2;(1-s)^{2} \right )  \right ]^{\frac{1}{p}}\dd s\left \|  f   \right \|_{\apa}\nonumber \\[0.1cm]
%&+B\left ( \frac{\alpha+2}{p},b+1-\frac{\alpha+2}{p}\right ) \left ( \frac{(\alpha+1)2^{4\alpha+11-2(m+1)p}}{9[(m+1)p-2\alpha-2]}+ 2^{4\alpha+8-2(m+1)p}\right )^{\frac{1}{p}}\left \| S^{*} \right \|_{\apa\rightarrow\apa}^p\left \|  f   \right \|_{\apa}\nonumber \\[0.1cm]
%&+B\left ( m+1-\frac{\alpha+2}{p},b+\frac{\alpha+2}{p}-m \right )\left ( \frac{(\alpha+1)2^{6\alpha+15-3(m+1)p}}{9[(m+1)-2\alpha-2]} +2^{6\alpha+12-3(m+1)p}\right )^{\frac{1}{p}}\left \| S^{*} \right \|_{\apa\rightarrow\apa}^m\left \|  f   \right \|_{\apa}.
%\end{align*}   }

  Next we consider the case $(m+1)p-2\alpha-4 \ge 0$.  Since $\left | \varphi_s(z) \right |^{(m+1)p-2\alpha-4}\leq 1,$ from (\ref{eqtsp}) and Lemma 4.1, we have
\begin{align}
  \| T_s(z^{m}  S^{*m}(f))   \|_{\apa}^p \leq & \frac{(\alpha+1) \rho_s^{\alpha+2}}{s^{p-\alpha-2}(1-s)^{\alpha+2-bp}}\int_{\mathbb{D}}\left | S^{*m}\left ( f\left ( \varphi_s(z) \right )  \right )  \right |^p(1-\left | z \right |^2 )^{\alpha}\dd A(z)\nonumber \\[0.1cm]
= & \frac{ \rho_s^{\alpha+2}}{s^{p-\alpha-2}(1-s)^{\alpha+2-bp}}\left \| S^{*m}\left ( f\left ( \varphi_s(z) \right )  \right )  \right \|_{\apa}^p\nonumber \\[0.1cm]
\leq & \frac{ 1}{s^{p-\alpha-2}(1-s)^{\alpha+2-bp}}\left \| S^{*} \right \|_{\apa\rightarrow\apa}^{mp}\left \|  f   \right \|_{\apa}^p. \label{eqtspp}
\end{align}
Consequently,
\begin{equation*}
\left \| T_s(z^{m} S^{*m}(f)) \right \|_{\apa}\leq  s^{\frac{\alpha+2}{p}-1}(1-s)^{b-\frac{\alpha+2}{p}}\left \| S^{*} \right \|_{\apa\rightarrow\apa}^{m}\left \|  f   \right \|_{\apa}.
\end{equation*}
Using (\ref{eqts}), (\ref{eqtso}) and (\ref{eqtspp}), we get
\begin{align*}
& \left \| T_s(f) \right \|_{\apa}\\[0.1cm]
%\leq &\sum_{n=0}^{m-1}\frac{1}{n!}\left \| T_s(z^{n}f^{(n)}(0)) \right \|_{\apa}+\left \| T_s(z^{m}  S^{*m}(f)) \right \|_{\apa}\nonumber \\[0.1cm]
\leq& \sum_{n=0}^{m-1}\frac{(1-s)^{b}s^{n} }{\left[(\alpha+1)B\left(\frac{np}{2}+1,\alpha+1 \right)\right]^{\frac{1}{p}}}\left[F\left( \frac{p(n+1)}{2},\frac{p(n+1)}{2},\alpha+2;(1-s)^2 \right)\right]^{\frac{1}{p}} \left\| f \right \|_{\apa} \nonumber \\[0.1cm]
&+ s^{\frac{\alpha+2}{p}-1}(1-s)^{b-\frac{\alpha+2}{p}}\left \| S^{*} \right \|_{\apa\rightarrow\apa}^{m}\left \|  f   \right \|_{\apa},
\end{align*}
which implies the desired result.  The proof is complete.
\end{proof}

In particular, we can obtain the exact value of the norm of the generalized Hilbert operator $\mathcal{H}_b$ on $\apa$ when $-1<\alpha<0$ and $p\geq2(\alpha+2)$.

\begin{corollary} \label{cor4.1}
  Let $-1<\alpha<0$, $b\geq 0$, $1< p< \infty$ such that  $p\geq2(\alpha+2)$. Then
\begin{align*}\label{eq66+}
\|\mathcal{H}_b\|_{\apa\rightarrow\apa}=B\Big(\frac{\alpha+2}{p},~b+1-\frac{\alpha+2}{p}\Big).
\end{align*}
\end{corollary}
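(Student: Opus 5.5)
This corollary follows by combining the two bounds already in hand. For the upper bound I will invoke Theorem \ref{thm7}\,(i). Its hypotheses are $-1<\alpha<0$, $b\geq 0$, $1<p<\infty$ and $\alpha+2<p(b+1)$, and it gives
\[
\|\mathcal{H}_b\|_{\apa\rightarrow\apa}\leq B\Big(\frac{\alpha+2}{p},~b+1-\frac{\alpha+2}{p}\Big)
\]
whenever $p\geq 2(\alpha+2)$. For the matching lower bound I will use Theorem \ref{thm2}, which holds for every $\alpha>-1$, $b\geq0$, $1<p<\infty$ with $\alpha+2<p(b+1)$.

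The only thing to check is that the standing condition $\alpha+2<p(b+1)$ of both theorems is automatic under the corollary's hypotheses. Since $\alpha>-1$, we have $\alpha+2>0$. Then $p\geq 2(\alpha+2)>\alpha+2$, and because $b\geq 0$,
\[
p(b+1)\geq p>\alpha+2 .
\]
Equivalently, $0<\frac{\alpha+2}{p}\leq\frac12<b+1$. This also shows both Beta parameters are positive, so the Beta value is finite.

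Applying Theorem \ref{thm7}(i) and Theorem \ref{thm2} together then gives the two-sided estimate, which yields the stated equality. I expect no real obstacle, since the substantive work (the composition-operator estimate via Lemma \ref{lem66} for negative $\alpha$, and the test functions $f_\gamma$) is already contained in those two theorems. The one point needing care is the verification above that the boundedness condition $\alpha+2<p(b+1)$ holds, so that both theorems are genuinely applicable.
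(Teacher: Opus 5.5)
Your proposal is correct and takes the same approach as the paper: it combines the upper bound of Theorem \ref{thm7}(i) with the lower bound of Theorem \ref{thm2}. Your check that $p\ge 2(\alpha+2)>\alpha+2$ forces $\alpha+2<p(b+1)$ is exactly the hypothesis verification needed to apply both theorems.
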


{\bf Data Availability}  No data was used to support this study.
	
{\bf Conflicts of Interest}  The authors  declare that they have no conflicts of interest.

{\bf Acknowledgements} 	The corresponding author was supported by  NNSF of China (No. 12371131). The third author was supported by Sichuan Provincial Natural Science Foundation of China (No. 2026NSFSC0718) and Guizhou Education Department Youth Science and Technology Talent Growth Project (No. QianJiaoJi [2024] 159).


\begin{thebibliography}{WWW}

\bibitem{BTH} G. Bao, L. Tian and  H. Wulan, The norm of the Hilbert matrix operator on Bergman spaces, {\it Canad. Math. Bull.} (2026), 1--18.

\bibitem{bw} G. Bao and  H. Wulan,   Hankel matrices acting on Dirichlet spaces, {\it J. Math. Anal.   Appl.}  {\bf 409} (2014), no.1, 228--235.

\bibitem{Bh} B. Bhayo and J. S$\acute{a}$ndor, On the inequalities for Beta function, {\it Notes Number Theory Discrete Math.} {\bf21} (2015) 1--7.


\bibitem{BK} V. Bo\v{z}in and B. Karapetrovi\'{c}, Norm of the Hilbert matrix on Bergman spaces, {\it J. Funct. Anal.} \textbf{274} (2018), 525--543.


\bibitem{Da}  J. Dai, On the norm of the Hilbert matrix operator on weighted Bergman spaces, {\it J. Funct. Anal.}  {\bf287} (2024), 110587.


\bibitem{Di} E. Diamantoupoulos, Hilbert matrix on Bergman spaces, {\it Illinois J. Math.} \textbf{48} (2004), 1067--1078.

\bibitem{DS} E. Diamantopoulos and A. Siskakis, Composition operators and Hilbert matrix, {\it Studia Math.} \textbf{140} (2000), 191--198.

\bibitem{DJV} M. Dostanni\'{c}, M. Jevti\'{c} and D. Vukoti\'{c}, Norm of the Hilbert matrix on Bergman and Hardy spaces and a theorem
of Nehari type, \textit{J. Funct. Anal.} \textbf{254} (2008), 2800--2815.

%\bibitem{EMOT} A. Erd\'{e}lyi, W. Magnus, F. Oberhettinger and F. Tricomi, {\it Higer Transcendental Functions}, vol. I, McGraw-Hill, New York, 1973.

\bibitem{ggps}  P. Galanopoulos, D. Girela, J. Peláez and A. Siskakis, Generalized Hilbert operators,  {\it Ann. Acad. Sci. Fenn. Math.} {\bf 39} (2014), 231--258.


%\bibitem{HL}  G.   Hardy and J.  Littlewood, Some properties of fractional integrals. II, {\it Math. Z.} \textbf{34} (1932), no. 1, 403--439.


\bibitem{JK} M. Jevti\'{c} and B. Karapetrovi\'{c}, Hilbert matrix on spaces of Bergman-type, \textit{J. Math. Anal. Appl.} \textbf{453} (2017), 241--254.

\bibitem{JVA}  M. Jevti\'{c}, D. Vukoti\'{c} and M. Arsenovi\'{c},  {\it Taylor Coefficients and Coefficient Multipliers of Hardy and Bergman-Type Spaces}, RSME Springer Series, Springer, 2016.

\bibitem{Bo}  B. Karapetrovi\'{c}, Norm of the Hilbert matrix operator on the weighted Bergman spaces,  \textit{Glasgow Math. J.}  {\bf 60} (2018), 513--525.


\bibitem{ka} B. Karapetrovi\'{c}, Hilbert matrix and its norm on weighted Bergman spaces, {\it J. Geom. Anal.}    {\bf 31} (2021), 5909--5940.

 \bibitem{li} S. Li, Generalized Hilbert operator on Dirichlet type spaces, \textit{Appl. Math. Comput.} \textbf{214} (2009), 304--309.

\bibitem{ls}  S. Li and S. Stevi\'c, Generalized Hilbert operator and Fej\'er-Riesz type inequalities on the polydisc,  {\it Acta Math. Sci.} {\bf29} (2009), 191--200.

%\bibitem{LM2} M. Lindstr\"{o}m, S. Miihkinen and D. Norrbo, Exact essential norm of generalized Hilbert matrix operators on classical analytic function spaces,  {\it  Adv. Math.} \textbf{408} (2022),  1--34.

\bibitem{LM} M. Lindstr\"{o}m, S. Miihkinen and N. Wikman, On the exact value of the norm of the Hilbert matrix operator on the weighted Bergman spaces,  {\it Ann. Fenn. Math.} \textbf{46} (2021),  201--224.

\bibitem{liu} C. Liu, Sharp Forelli-Rudin estimates and the norm of the Bergman projection, {\it J. Funct. Anal.}  \textbf{268} (2015), no.2, 255--277.


\bibitem{wzz} H. Wulan, M. Zhou and J. Zhu, Hilbert matrix norms on weighted Bergman spaces: even exponents and a counterexample to the beta formula,
https://doi.org/10.48550/arXiv.2607.23540.



%\bibitem{Lu} Y. Luke,  {\it The Special Functions and Their Approximations}, New York, NY: Academic Press, 1969.
%
%\bibitem{ma} W. Magnus, On the spectrum of Hilbert's matrix,  {\it Amer.J. Math.} \textbf{72} (1950), 699--704.
%
%\bibitem{Wu} Z. Wu, Carleson measures and multipliers for Dirichlet spaces,  {\it J. Funct. Anal.}  \textbf{169} (1999), 148--163.

\bibitem{Zhu2} K. Zhu,  {\it Operator Theory in Function Spaces},   Second Edition, Amer. Math. Soc.  2007.

\end{thebibliography}
\end{document}